  \newcommand{\R}{\ensuremath{\mathbf{R}}}%
  \newcommand{\Z}{\ensuremath{\mathbf{Z}}}%
  \newcommand{\N}{\ensuremath{\mathbf{N}}}%
  \newcommand{\Sym}{\ensuremath{\textrm{Sym}}}%
  \newcommand{\Pf}{\mathscr{P}_\mathrm{f}}%
  \newcommand{\St}{\ensuremath{\textrm{St}}}%
  \newcommand{\supp}{\ensuremath{\textrm{supp}}}%
  \newcommand{\cati}{\ensuremath{\mathbf{I}}}%
  \newcommand{\cata}{\ensuremath{\mathbf{Amen}}}%
  \newcommand{\catac}{\ensuremath{\mathbf{AmenAct}}}%
  \DeclareMathAlphabet\mathbfcal{OMS}{cmsy}{b}{n}
  \newcommand{\G}{\ensuremath{\mathcal{G}}}%
  \renewcommand{\H}{\ensuremath{\mathcal{H}}}%
  \newcommand{\catset}{\ensuremath{\mathbf{Set}}}%
  \newcommand{\grp}{\ensuremath{\mathbf{Grp}}}%
  \newcommand{\set}{\ensuremath{\mathbf{Set}}}%
  \newcommand{\acts}{\ensuremath{\curvearrowright}}%
  \newcommand{\rk}{\operatorname{rk}_{\mathbf Q}}
  \newcommand{\iet}{\operatorname{IET}}
  \newcommand{\X}{{X}}
  \newcommand{\sym}{\operatorname{Sym}}
\theoremstyle{definition}
  \newtheorem{defin}{Definition}[section]
  \newtheorem{question}[defin]{Question}
\theoremstyle{plain}
  \newtheorem{thm}[defin]{Theorem}
  \newtheorem{main thm}{Theorem}
  \newtheorem{prop}[defin]{Proposition}
  \newtheorem{cor}[defin]{Corollary}
  \newtheorem{lemma}[defin]{Lemma}
\theoremstyle{remark}
  \newtheorem{remark}[defin]{Remark}
  \newtheorem{remarks}[defin]{Remarks}
  \newtheorem{example}[defin]{Example}
\begin{document}

\author{Kate Juschenko \and Nicol\'as Matte Bon \and Nicolas Monod \and Mikael de la Salle}
\title{Extensive amenability\\ and an application to interval exchanges}
\date{January 2015}

\maketitle

\begin{abstract}
Extensive amenability is a property of group actions which has recently been used as a tool to prove amenability of groups. We study this property and prove that it is preserved under a very general construction of semidirect products. As an application, we establish the amenability of all subgroups of the group $\iet$ of interval exchange transformations that have angular components of rational rank~${\leq 2}$.

In addition, we obtain a reformulation of extensive amenability in terms of inverted orbits and use it to present a purely probabilistic proof that recurrent actions are extensively amenable. Finally, we study the triviality of the Poisson boundary for random walks on $\iet$ and show that there are subgroups $G <\iet$ admitting no finitely supported measure with trivial boundary.
\end{abstract}

\section{Introduction}
Recall that an action of a group $G$ on a set $X$ is \emph{amenable} if it admits a $G$-invariant mean, i.e.\ a finitely additive probability measure defined on all subsets of $X$ that is invariant under the action of $G$. 

In this article, we study \emph{extensive amenability} of an action $G\acts X$. As the name is intended to suggest, this is a much stronger form of amenability and it has an intimate connection with extensions of groups and of actions. This property was introduced (without a name) in~\cite{JM} as a tool to prove amenability of groups, a role it continued to play in~\cite{JNS}. In order to give its formal definition, we denote by $\Pf(X)$ the set of all finite subsets of $X$.

\begin{defin}\label{condi=fullweight}
The action of a group $G$ on a set $X$ is \emph{extensively amenable} if there is a $G$-invariant mean on $\Pf(X)$ giving full weight to the collection of subsets that contain any given element of $\Pf(X)$.
\end{defin}

\begin{remarks}\ 
\begin{enumerate}[(i)]
\item Extensively amenable actions are amenable unless $X=\varnothing$, and every action of an amenable group is extensively amenable; see Lemma~\ref{lemma=intermediatepty}.
\item Both converses fail: there are amenable actions that are not  extensively amenable (see~\cite{JS}, or \S~\ref{section:frankenstein} for a new example), as well as extensively amenable faithful actions of nonamenable groups~\cite{JM}.
\item A sufficient condition for extensive amenability was provided in~\cite{JNS}, namely \emph{recurrence} in the sense of random walks; see \S~\ref{S: recurrent} for details and a new proof of this result.
\item We do not consider any topology on $G$ or $X$. Moreover, it is not relevant to assume $G$ or $X$ countable, because an action $G\acts X$ is extensively amenable if and only if the action of every finitely generated subgroup $H\le G$ on every $H$-orbit is so (Lemma~\ref{lemma=JSbis}).
\end{enumerate}
\end{remarks}

An important advantage of extensive amenability over usual amenability of actions is that it is more robust. For instance, it is preserved under extensions of actions in a sense made precise in \S~\ref{section:elementary}. Another stability property is fundamental for applications and concerns a wide class of group actions constructed functorially from $G\acts X$; here the group $G$ itself will be replaced by a semidirect product extension of $G$. The first example is as follows:

Given $G\acts X$ one can form the \emph{permutational wreath product}, or ``lamplighter group'', $(\Z/2\Z)^{(X)}\rtimes G$. Here $(\Z/2\Z)^{(X)}$ denotes the group of finitely supported configurations $X\to \Z/2\Z$, on which $G$ acts by translations; it can be identified with $\Pf(X)$. The lamplighter group acts affinely on the ``lamp group'' $(\Z/2\Z)^{(X)}$, identified with the coset space $((\Z/2\Z)^{(X)}\rtimes G)/G$. By simple Fourier analysis arguments, it was shown in~\cite[3.1]{JM} that the amenability of this affine action is equivalent to the extensive amenability of $G\acts X$. 

Our first result (Theorem~\ref{prop:functor:intro} below) provides a vast non-commutative generalization of this equivalence, wherein the lamp groups $(\Z/2\Z)^{(X)}$ will be replaced by rather arbitrary functors $F$ applied to $X$. Specifically, we consider all "finitary" functors to the category of amenable groups in the following precise sense.

Denote by $\cati$ the category of finite sets whose morphisms are injective maps. Denote further by $\cata$ the category of amenable groups with group homomorphisms. Since $\cata$ has direct limits, any functor $F\colon\cati\to\cata$ extends to the category of all sets with injective maps as morphisms by setting $F(X)$ to be the direct limit of $F(Y)$ as $Y$ runs over the directed set $\Pf(X)$. We still denote the resulting functor by the same letter $F$. Several explicit examples are given in \S~\ref{section:functors}, but the reader can already have in mind $F(X) = A^{(X)}$ for an amenable group $A$, or $F(X) = \Sym(X)$ the group of permutations of $X$ with finite support.

Notice that any group action $G\acts X$ yields an action $G\acts F(X)$ by automorphisms and hence a semidirect product group $F(X)\rtimes G$. This semidirect product acts on $F(X)$ identified with the coset space $(F(X)\rtimes G)/G$. 
 
We say that the functor $F$ is \emph{tight} on a nonempty set $X$ if for all (equivalently, some) $x\in\X$ the morphism $F(X\setminus \{x\})\to F(X)$ induced by inclusion is not onto. A particular case of our results from \S~\ref{section:functors} is the following theorem.

\begin{thm}\label{prop:functor:intro}
Let $F\colon\cati\to\cata$ be any functor, extended to arbitrary sets as described above. Let $G$ be a group acting on a set $X$.

If the action $G\acts X$ is extensively amenable, then the action $F(X) \rtimes G\acts F(X)$ is amenable. 
Moreover it is extensively amenable.

Conversely, assume that the action $F(X) \rtimes G\acts F(X)$ is amenable. Then $G\acts X$ is extensively amenable provided $F$ is tight on $X$.
\end{thm}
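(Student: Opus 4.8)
The plan is to establish the stronger conclusion that $F(X)\rtimes G\acts F(X)$ is \emph{extensively} amenable; since $F(X)\neq\varnothing$, its plain amenability then follows from Lemma~\ref{lemma=intermediatepty}. (This is a particular case of \S~\ref{section:functors}; here I only sketch the two implications.) For $E\in\Pf(X)$ write $\iota_E\colon F(E)\to F(X)$ for the canonical homomorphism into the direct limit, and put $Q=F(X)\rtimes G$. Recall that a mean on $\Pf(Y)$ ``gives full weight to up-sets'' if it assigns mass $1$ to $\{S\in\Pf(Y):S\supseteq D\}$ for every $D\in\Pf(Y)$.

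For the first implication I would construct a fibered mean on $\Pf(F(X))$. Fix a $G$-invariant mean $m$ on $\Pf(X)$ witnessing extensive amenability of $G\acts X$, and, for each $E\in\Pf(X)$, a mean $\mu_E$ on $\Pf(F(E))$ that is invariant under left translations by $F(E)$ and gives full weight to up-sets — such a $\mu_E$ exists because $F(E)$ is amenable, hence its translation action on itself is extensively amenable by Lemma~\ref{lemma=intermediatepty}. Averaging over the (finite) image of $\mathrm{Stab}_G(E)$ in $\Sym(E)$, which acts on $F(E)$, hence on $\Pf(F(E))$, by functoriality, I may moreover arrange that the family $(\mu_E)_E$ be $G$-equivariant, i.e.\ that $F(g)$ push $\mu_E$ forward to $\mu_{gE}$. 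Then define, for $\mathcal A\subseteq\Pf(F(X))$,
\[
\mathcal M(\mathcal A)=\int_{\Pf(X)}\mu_E\bigl(\{\,\mathcal S\in\Pf(F(E)):\iota_E(\mathcal S)\in\mathcal A\,\}\bigr)\,dm(E).
\]
This is a mean because preimages under $\mathcal S\mapsto\iota_E(\mathcal S)$ are Boolean homomorphisms; it is $G$-invariant by naturality of the $G$-action on $F$, invariance of $m$, and $G$-equivariance of $(\mu_E)_E$; it is invariant under each $b\in F(X)$ because, choosing $E'$ with $b\in\iota_{E'}(F(E'))$, left translation by $b$ on $\iota_E(F(E))$ is induced by a left translation inside $F(E)$ for every $E\supseteq E'$, and $\{E:E\supseteq E'\}$ has full $m$-mass; and it gives full weight to up-sets because, given $\mathcal D\subseteq\iota_{E_0}(F(E_0))$, for $E\supseteq E_0$ the set $\{\mathcal S:\iota_E(\mathcal S)\supseteq\mathcal D\}$ contains an up-set of full $\mu_E$-mass. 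Hence $Q\acts F(X)$ is extensively amenable.

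For the converse, let $M$ be a $Q$-invariant mean on $F(X)$, with $F$ tight on $X$. Pushing $M$ forward along a support map does not suffice — for $F=(\Z/2\Z)^{(\cdot)}$ the resulting mean would give each ``coordinate'' mass $\tfrac12$, not $1$ — so the point is to amplify. Consider the $G$-equivariant map $\phi\colon F(X)\to\Pf(X)$, $\phi(a)=\bigcap\{E\in\Pf(X):a\in\iota_E(F(E))\}$; a short computation identifies $\{a:x\notin\phi(a)\}$ with the subgroup $H_x:=\iota_{X\setminus\{x\}}(F(X\setminus\{x\}))$ of $F(X)$, which is \emph{proper} precisely by tightness, so that (using left-invariance of $M$ and disjointness of the translates of $H_x$) $M(F(X)\setminus H_x)\geq\tfrac12$. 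For $n\geq1$ let $m_n$ be the pushforward of the product mean $M^{\otimes n}$ under $(a_1,\dots,a_n)\mapsto\phi(a_1)\cup\dots\cup\phi(a_n)\in\Pf(X)$; then each $m_n$ is a $G$-invariant mean on $\Pf(X)$ (the map is $G$-equivariant and $M^{\otimes n}$ is diagonally $G$-invariant), and for every $D\in\Pf(X)$
\[
m_n\bigl(\{E:E\supseteq D\}\bigr)\ \geq\ 1-\sum_{x\in D}M(H_x)^{\,n}\ \geq\ 1-|D|\,2^{-n}\ \xrightarrow{\ n\to\infty\ }\ 1 .
\]
Taking $m:=\lim_{\mathcal U}m_n$ along a non-principal ultrafilter $\mathcal U$ on $\N$ produces a $G$-invariant mean on $\Pf(X)$ giving full weight to up-sets, i.e.\ $G\acts X$ is extensively amenable.

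The purely routine parts are the Boolean and measure-theoretic bookkeeping and the averaging that makes $(\mu_E)_E$ $G$-equivariant. I expect the genuine obstacles to be (i) in the first implication, finding the fibered mean $\mathcal M$ and checking it is invariant under the whole of $F(X)$, which is where the ``full weight to up-sets'' hypothesis on $m$ is consumed; and (ii) in the converse, realising that one must not transport $M$ directly but amplify the strictly positive number $M(F(X)\setminus H_x)$ — positivity being exactly what tightness provides — up to full weight, by unions of independent copies and an ultralimit.
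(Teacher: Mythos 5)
Your argument is correct, and both halves take a genuinely different route from the paper's. For the forward implication the paper only produces a mean on $F(X)$ itself (integrating against $m$ the pushforwards of $F([n])\rtimes\Sym([n])$-invariant means on $F([n])$), and then upgrades plain amenability to extensive amenability by re-applying that amenability statement to the composed functor $F_1\diamond F$ with $F_1(Y)=(\Z/2\Z)^{(Y)}$ --- which is the reason the paper passes to the larger category $\catac$ of amenable \emph{actions}. Your fibered mean on $\Pf(F(X))$ gets extensive amenability in one pass and never leaves the category of amenable groups; the price is the equivariance bookkeeping for the family $(\mu_E)$, which the paper sidesteps by imposing $\Sym([n])$-invariance on its reference means from the outset --- morally the same device as your averaging over the image of $\mathrm{Stab}_G(E)$ in $\Sym(E)$ (to be fully precise one should say: average on a chosen orbit representative and transport along the $G$-orbit, but this is exactly what you intend). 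For the converse, both proofs push $M$ forward along the support map and both consume tightness exactly where you do, namely to see that $H_x$ is a proper subgroup and hence $M(H_x)\le\tfrac12$; but the paper stops at ``the up-set at $x_0$ has positive mass'' and invokes the implication (iv)$\Rightarrow$(i) of Lemma~\ref{lemma=JSbis}, whereas your amplification by product means $M^{\otimes n}$ followed by an ultralimit reproves that implication by hand. This makes your converse self-contained (it does not route through the Fourier-analytic Lemma 3.1 of~\cite{JM} hidden inside Lemma~\ref{lemma=JSbis}) at the cost of a few extra lines, while the paper's version is shorter but leans on the general lemma.
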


In particular, the amenability of the action $F(X) \rtimes G\acts F(X)$ does not depend on the choice of the functor $F$ provided that it is tight on $X$. Furthermore, for such ``affine'' actions, amenability and extensive amenability coincide.

Beside the lamplighter case $F(X)=(\Z/2\Z)^{(X)}$, another concrete example to which Theorem~\ref{prop:functor:intro} applies is $F(X)=\sym(X)$, the group of finitely supported permutations of a set $X$, on which $G$ acts by conjugation. See \S~\ref{section:functors} for more examples. A case in which the theorem \emph{does not} apply, as it does not come from a functor on $\cati$, is $F(X)=(\Z/2\Z)^X$, the \emph{unrestricted} direct product indexed by $X$; see \S~\ref{section:unrestricted} for a counterexample in this case.

\medskip
Theorem~\ref{prop:functor:intro} can be used to establish a criterion for the amenability of some subgroups of $F(X) \rtimes G$:

\begin{cor}\label{prop:functor:amenable:intro}
Let $G \acts X$ be an extensively amenable action and let $F\colon \cati \to \cata$ be any functor.

A subgroup $H$ of $F(X) \rtimes G$ is amenable as soon as the intersection $H \cap (\{1\} \times G)$ is so.
\end{cor}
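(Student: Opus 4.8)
The plan is to have $H$ act on $F(X)$ through the restriction of the canonical action $F(X)\rtimes G\acts F(X)$, and to use that this action is not merely amenable but \emph{extensively} amenable, which is exactly what lets one pass to the $H$-orbit of the base point; that orbit has stabiliser $H\cap(\{1\}\times G)$, so the classical ``amenable-by-amenable'' pattern then finishes the argument. Concretely, I would first invoke Theorem~\ref{prop:functor:intro}: since $G\acts X$ is extensively amenable, the action $F(X)\rtimes G\acts F(X)$ is extensively amenable. Restricting a $(F(X)\rtimes G)$-invariant mean on $\Pf(F(X))$ satisfying the full-weight condition of Definition~\ref{condi=fullweight} to the subgroup $H$, it stays $H$-invariant and keeps that property, so $H\acts F(X)$ is extensively amenable. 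Next I would pin down the base point: under the identification $F(X)=(F(X)\rtimes G)/G$, the stabiliser in $F(X)\rtimes G$ of the trivial coset $o$ is precisely $\{1\}\times G$, hence the stabiliser of $o$ in $H$ is $H\cap(\{1\}\times G)$, which is amenable by hypothesis, and the orbit $O:=H\cdot o$ is $H$-equivariantly isomorphic to $H/(H\cap(\{1\}\times G))$.

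The key step is to transfer extensive amenability from $F(X)$ to the $H$-invariant subset $O$. For this I would use the map $r\colon\Pf(F(X))\to\Pf(O)$ given by $r(S)=S\cap O$, which is $H$-equivariant because $O$ is $H$-invariant; pushing forward along $r$ an extensively amenable mean for $H\acts F(X)$ produces an $H$-invariant mean on $\Pf(O)$ that still gives full weight to $\{S'\in\Pf(O):A\subseteq S'\}$ for every $A\in\Pf(O)$, since for such $A$ one has $A\subseteq S\cap O$ if and only if $A\subseteq S$. Thus $H\acts O$ is extensively amenable, and in particular amenable by Lemma~\ref{lemma=intermediatepty} (the orbit is nonempty); equivalently, $H/(H\cap(\{1\}\times G))$ carries an $H$-invariant mean. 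Finally I would conclude with the standard fact that a group $H$ possessing an amenable subgroup $S=H\cap(\{1\}\times G)$ such that $H/S$ admits an $H$-invariant mean is itself amenable — average a left-invariant mean on $S$ over $H/S$ to obtain one on $H$.

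The hard part — really the only subtle point — is recognising that one must use the ``moreover'' clause of Theorem~\ref{prop:functor:intro}, i.e.\ the \emph{extensive} amenability of $F(X)\rtimes G\acts F(X)$ and not just its amenability: an invariant mean on $\Pf(F(X))$ may well assign measure zero to the subsets meeting $O$, so plain amenability does not descend to the orbit, whereas the push-forward argument above shows extensive amenability does. Everything else (identifying the stabiliser, the orbit, and the final coset-space averaging) is routine.
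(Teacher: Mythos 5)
Your proof is correct and follows essentially the same route as the paper: invoke the ``moreover'' clause of Theorem~\ref{prop:functor:intro} to get extensive amenability of $F(X)\rtimes G\acts F(X)$, restrict to the $H$-orbit of the identity (whose stabiliser is $H\cap(\{1\}\times G)$), and conclude from the amenable-action-with-amenable-stabilisers principle. The only cosmetic difference is that you re-derive by hand the passage to a subgroup and a sub-orbit (restriction of the mean plus push-forward along $S\mapsto S\cap O$), where the paper simply cites Lemma~\ref{lemma=JSbis}.
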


(The derivation of this result from Theorem~\ref{prop:functor:intro} is given in \S~\ref{section:functors}.)

%

\begin{remark}\label{R: method}
A particular case in which this criterion applies is when one is able to construct a \emph{twisted embedding} $G\hookrightarrow F(X)\ltimes G$ of the form $g\mapsto (c_g, g)$ with the property that $\{g\in G\: : \: c_g=1\}$ is an amenable subgroup of $G$. We then say that  $c\colon G\to F(X),\  g \mapsto c_g $ is a \emph{$F(X)$-cocycle with amenable kernel}. The conclusion is then that $G$ is amenable.
\end{remark}

This method was used in the proof that the topological full group of any minimal Cantor system is amenable~\cite{JM}; namely the particular case of Corollary~\ref{prop:functor:amenable:intro} for the ``lamp'' functor $F(X)= (\Z/2\Z)^{(X)}$ was used. In the next paragraph we present a new application of this method, relying this time on the functor $F(X)=\sym(X)$.

\paragraph{Application to groups of interval exchange transformations.}

An interval exchange transformation is a permutation of a circle obtained by cutting this circle into finitely many intervals (arcs), and reordering them. More precisely, an interval exchange transformation is a right-continuous permutation $g$ of $\R/\Z$ such that the set $\{gx - x, x \in \R/\Z\}$, called the set of angles of $g$, is finite. Interval exchange transformations are traditionally defined as permutations of an interval $[a,b)$, but we get the same notion by identifying $a$ and $b$.

The set of all interval exchange transformations is a group acting on $\R/\Z$, denoted by $\iet$. 

The interval exchange transformations have been a popular object of study in dynamical systems and ergodic theory; for example they model the dynamics on polygonal billiards with rational angles. The exchanges of three or more intervals were first considered by Katok and Stepin~\cite{KS}. The systematic study started in the paper by Keane~\cite{keane}, who introduced the terminology. For an introduction and account of the results, see the survey by Viana
\cite{viana} or the book of Katok and Hasselblatt ~\cite{KH}.

A basic question on $\iet$ was raised by  Katok, namely whether or not $\iet$ contains a non-abelian free group. This problem attracted some attention recently; for instance, Dahmani--Fujiwara--Guirardel~\cite{DFG} showed that free subgroups in $\iet$ are {\it rare} in the sense that a group generated by a generic pair of interval exchange transformations is not free. A related open question raised in~\cite[p.4]{Cornulier:Bourbaki} is as follows.

\begin{question}\label{Q: iet amenable}
Is the group $\iet$ amenable?
\end{question}

It is sufficient for these questions to consider finitely generated subgroups of $\iet$. The latter can be classified according to their \emph{rational rank} as follows. Given a subgroup $G \le \iet$, we denote by $\Lambda(G)\le \R/\Z$ its \emph{group of angles}, i.e. the subgroup of $\R/\Z$ generated by all increments $gx-x$ where $g\in G$ and $x\in \R/\Z$.

\begin{defin}\label{def=rational_rank_iet}
The \emph{rational rank} of $G$, denoted $\rk(G)\in \N\cup\{\infty\}$, is the supremum of all $d$ such that $\Z^d$ embeds in $\Lambda(G)$.
\end{defin}

\begin{remark}
If $G$ is finitely generated, then $\Lambda(G)$ is a finitely generated abelian group and thus $\Lambda(G)\simeq \Z^d\times H$ with $H$ a finite abelian group. In this case $\rk(G)=d$.
\end{remark}

\begin{thm}\label{T: rank2intro}\label{T:rank2}
Let $G \le \iet$. If $\rk(G)\leq 2$ then $G$ is amenable. 
\end{thm}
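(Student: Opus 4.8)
The plan is to argue by induction on the rational rank $d:=\rk(G)\in\{0,1,2\}$, invoking at each step the functorial amenability criterion of Corollary~\ref{prop:functor:amenable:intro} for the functor $\sym$ of finitely supported permutations. Since amenability is a local property, one reduces immediately to the case that $G$ is finitely generated; then its group of angles $\Lambda:=\Lambda(G)\le\R/\Z$ is a finitely generated abelian group, hence $\Lambda\simeq\Z^{d}\times F$ with $F$ finite.

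For the base case $d=0$ the group $\Lambda$ is finite, say $\Lambda=\tfrac1n\Z/\Z$. Let $B\subset\R/\Z$ be the finite set of discontinuities of a finite generating set of $G$. The discontinuity set of a product of interval exchanges lies in the union of suitable translates of the discontinuity sets of the factors, so the discontinuity set of an arbitrary element of $G$ lies in the $G$-orbit of $B$; and as $gx-x\in\Lambda$ for all $g\in G$ and all $x$, that orbit is contained in the finite set $B+\Lambda$. Consequently there is one finite partition of $\R/\Z$ into arcs with respect to which every element of $G$ is a piecewise translation, and $G$ permutes these finitely many arcs; since a nontrivial translation cannot map an arc onto itself, the resulting homomorphism from $G$ to a finite symmetric group is injective, so $G$ is finite.

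For the inductive step $d\ge1$ --- so that $d-1\le1$ and the inductive hypothesis is available for interval exchange groups of rational rank $\le d-1$ --- fix $\alpha\in\Lambda$ generating a direct $\Z$-summand, so $\Lambda=\Z\alpha\oplus\Lambda_1$ with $\rk\Lambda_1=d-1$. The goal is to produce: (i) a set $X$ carrying an \emph{extensively amenable} $G$-action; and (ii) a cocycle $c\colon G\to\sym(X)$ for the induced action of $G$ on $\sym(X)$, the value $c_g$ being the finitely supported permutation of $X$ that records the reorganisation effected by $g$ ``in the $\Z\alpha$-direction'' at its finitely many discontinuities, arranged so that $c_g=1$ precisely when the $\Z\alpha$-component of the displacement function $x\mapsto gx-x$ is constant. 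The set $K$ of such $g$ is a subgroup, and the map sending $g\in K$ to that constant is a homomorphism $K\to\Z$ whose kernel consists of the interval exchanges in $G$ all of whose angles lie in $\Lambda_1$, a group of rational rank $\le d-1$. Hence $K$ is (rank $\le d-1$)-by-abelian, amenable by the inductive hypothesis; and the twisted map $g\mapsto(c_g,g)$ identifies $G$ with a subgroup $H\le\sym(X)\rtimes G$ with $H\cap(\{1\}\times G)\cong K$. Corollary~\ref{prop:functor:amenable:intro} then gives that $G$ is amenable.

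The heart of the matter --- and, in my view, the main obstacle --- lies in step (i) together with making step (ii) precise. For $X$ I would take a $G$-orbit inside a single $\Lambda$-coset of $\R/\Z$; as a $G$-set of bounded displacement it embeds into $\Z^{d}\times F$, and I would establish its extensive amenability by proving that this action is \emph{recurrent}, which suffices by the result recalled in \S\ref{S: recurrent}. Equipping $G$ with a symmetric, finitely supported, generating probability measure, the induced random walk on $X$ has bounded increments; moreover, because interval exchange transformations preserve Lebesgue measure, this walk is centred, in the sense that its drift averages to zero against the Lebesgue-induced weights on the coset. A bounded centred walk on $\Z^{d}\times F$ is recurrent if and only if $d\le2$, by the classical recurrence of random walks on $\Z$ and $\Z^{2}$ --- and this is the sole point at which the hypothesis $\rk(G)\le2$ is used. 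The subtlety is that this ``orbital'' walk is not an honest random walk on $\Z^{d}$ but a walk in the deterministic, spatially varying environment imposed by the interval exchanges, so that extracting a genuine recurrence statement from Lebesgue invariance requires real work (for instance, producing a Lipschitz corrector that turns the displacement into a martingale); this analytic input must then be combined with a careful combinatorial construction of the cocycle $c$ and a verification that $\{g\in G:c_g=1\}=K$.
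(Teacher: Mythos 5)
There is a genuine gap, and it sits exactly where you locate ``the heart of the matter'': the cocycle $c$ of step (ii) is not shown to exist, and as described it cannot. You want $c_g\in\sym(X)$ finitely supported, vanishing precisely when the $\Z\alpha$-component of $x\mapsto gx-x$ is constant. But for a generic $g\in G$ the displacement function is constant on finitely many arcs of $\R/\Z$, so the locus where its $\alpha$-component deviates from any chosen reference value is a finite union of arcs; since a $G$-orbit $X$ inside a $\Lambda$-coset is dense in $\R/\Z$ (for $d\ge 1$), that locus meets $X$ in an infinite set. Any permutation of $X$ ``recording the reorganisation in the $\Z\alpha$-direction'' would therefore have to move infinitely many points, so it does not land in $\sym(X)$ and Corollary~\ref{prop:functor:amenable:intro} does not apply. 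The paper avoids this entirely: the cocycle it uses is $\tau_g=\widetilde g\,g^{-1}$, where $\widetilde g$ is the left-continuous version of the right-continuous map $g$; this \emph{is} finitely supported (its support is the finite set of discontinuities of $g^{-1}$), satisfies the cocycle relation $\tau_{gh}=\tau_g({}^g\tau_h)$, and its kernel is the set of continuous elements, i.e.\ the rotations --- an abelian, hence amenable, subgroup. With that cocycle no induction on the rank is needed: the rank hypothesis is used in one place only, to get extensive amenability of $G\acts\R/\Z$, and the rest of the argument is uniform in $d$. Your induction therefore solves a problem (making the kernel small in rank) that the correct cocycle renders moot, while creating one (finite support) that it cannot overcome.

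On step (i), your instinct that recurrence is where $\rk(G)\le 2$ enters is right, but the route you sketch (Lebesgue invariance $\Rightarrow$ centred walk $\Rightarrow$ corrector/martingale) is both unnecessary and unsubstantiated --- it is not clear that the orbital walk is centred in any useful sense. The paper's argument is purely geometric: the orbital Schreier graph of a finitely generated $H\le\iet(\Lambda)$ embeds injectively and Lipschitzly into the Cayley graph of $\Lambda\simeq\Z^d\times F$ via $y\mapsto y-x_0$, and for $d\le 2$ recurrence follows from the standard comparison principle for reversible chains (Rayleigh monotonicity; \cite[Theorem 2.17]{LyonsPeres}); no drift computation is required. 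Your base case $d=0$ is fine but also unnecessary once the cocycle is fixed: for finite $\Lambda$ the orbits are finite, the action is trivially recurrent, and the same one-step argument applies.
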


The proof is based on the method outlined in Remark~\ref{R: method} with $F(X)=\sym(X)$, the group of finitely supported permutations of $X$. We first observe that there exists a $\sym(\R/\Z)$-cocycle $g\mapsto \tau_g$ with amenable kernel defined on the whole group $\iet$. This reduces the problem of the amenability of a subgroup $G\le\iet$ to proving the extensive amenability of the action $G\acts \R/\Z$. In the case $\rk(G)\leq 2$, this action is recurrent and this yields Theorem~\ref{T: rank2intro}. The general case would follow from a positive answer to Question~\ref{Q: polynomial} below.

The possibility to define a twisted embedding $\iet \hookrightarrow \sym (\R/\Z) \rtimes \iet$  also has an almost immediate  application related to \emph{Poisson--Furstenberg boundaries} for random walks on $\iet$.   A group endowed with a probability measure $(G, \mu)$ is said to have the \emph{Liouville property} if its Poisson--Furstenberg boundary is trivial. If the support of $\mu$ generates $G$, the Liouville property for $(G, \mu)$ implies amenability of $G$~\cite[\S 4.2]{Kaimanovich-Vershik}. Conversely any amenable group admits a symmetric measure supported on a 
(possibly infinite) generating set so that $(G, \mu)$ has the Liouville property~\cite[Theorem 4.3]{Kaimanovich-Vershik}. The next result shows that there is an obstruction to showing amenability of $\iet$ in this way, namely that the Liouville property never holds for finitely supported measures generating a ``large enough'' subgroup. 

We say that a probability measure on a group is \emph{non-degenerate} if its support generates the group.

\begin{thm} \label{T:rank3} Let $G<\iet$ be finitely generated.
\begin{enumerate}[(i)]
\item If $\rk(G)=1$ then every symmetric, finitely supported probability measure $\mu$ on $G$ has the Liouville property.
\item Assume that $\rk(G)\geq 3$ and, viewing $\Lambda(G)$ as a subgroup of $\iet$ consisting of rotations, assume that $G$ contains $\Lambda(G)$ strictly. Then every finitely supported, non-degenerate probability measure on $G$ has non-trivial Poisson--Furstenberg boundary.
\end{enumerate}
\end{thm}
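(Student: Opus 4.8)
The plan is to use the twisted embedding $\iota\colon\iet\hookrightarrow\sym(\R/\Z)\rtimes\iet$, $g\mapsto(\tau_g,g)$, together with the entropy criterion for the Liouville property. Here $\tau_g$ is the finitely supported permutation of a suitable countable $\iet$-invariant set $Y\subseteq\R/\Z$ of ``germs at discontinuities'' which records how $g$ rearranges the pieces near its (finitely many) discontinuity points; in particular $\supp(\tau_g)$ is a ``near-discontinuity'' set and $\tau_g=1$ exactly when $g$ is continuous, i.e.\ a rotation. Given a finitely generated $G<\iet$ with step distribution $\mu$, write $w_n=s_1\cdots s_n$ for the $\mu$-random walk, so that $\iota(w_n)=(\sigma_n,w_n)$ with $\sigma_n=\tau_{w_n}=\prod_{k=1}^{n}w_{k-1}\cdot\tau_{s_k}$ (ordered product in $\sym(Y)$, with $G$ acting on $\sym(Y)$ through $Y$). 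Then $\supp(\sigma_n)\subseteq T_n$ and $w_n$ has at most $|T_n|$ cut points, where $T_n=\bigcup_{k\le n}w_kD$ is the finite ``trace'' and $D\subseteq Y$ is the finite union over $s\in\supp\mu$ of the supports $\supp(\tau_s)$ and the discontinuity sets of $s$. The size of $T_n$ is controlled by the range of the Markov chain $q\mapsto w_nq$ on $Y$, and the decisive geometric point is: when $\rk(G)=r$, each $G$-orbit in $Y$ sits inside a coset of $\Lambda(G)\cong\Z^{r}\times(\text{finite})$ on which the generators act by bounded-displacement permutations, so that its Schreier graph has bounded degree and is quasi-isometric to $\Z^{r}$. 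For symmetric $\mu$ the induced chain on $Y$ is a symmetric Markov chain, hence recurrent for $r\le2$ and --- once moreover $\Lambda(G)\subseteq G$, so the orbit is the whole coset --- transient for $r\ge3$. Throughout I use the entropy criterion of Kaimanovich--Vershik and Derriennic, applicable since $\mu$ is finitely supported: $(G,\mu)$ is Liouville if and only if $h(\mu)=\lim_n\frac1n H(w_n)=0$.

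For part (i), assume $\rk(G)=1$ and $\mu$ symmetric and finitely supported. The induced chain on $Y$ is then a symmetric chain on a bounded-degree graph quasi-isometric to $\Z$, hence recurrent, so that $|T_n|$ --- and therefore the number of cut points of $w_n$ --- is $O(\sqrt n\log n)$ almost surely. Now I bound $H(w_n)$ via the standard parametrisation of an interval exchange by its (finite) set of cut points together with the permutation it induces on the resulting pieces (the translations being then forced): as the cut set $S$ of $w_n$ has size $O(\sqrt n\log n)$ and lies inside an interval of $Y\cong\Z$ of length $O(n)$,
\[
H(w_n)\ \le\ \log\binom{O(n)}{O(\sqrt n\log n)}+\log\!\big((O(\sqrt n\log n))!\big)+O(1)\ =\ o(n),
\]
whence $h(\mu)=0$ and $(G,\mu)$ is Liouville. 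This estimate makes transparent why the method stops at $\rk(G)=1$: for $\rk(G)=2$ the range on $Y$ (quasi-isometric to $\Z^{2}$) is of order $n/\log n$, so both terms become linear in $n$ --- and the culprit is the extra logarithmic factor inside $\log(|S|!)$, reflecting that the lamp group $\sym(Y)$ has unboundedly many local states, unlike the $\Z/2\Z$-lamps of the classical lamplighter. Symmetry of $\mu$ is equally essential, since a nonzero drift on $Y\cong\Z$ would force $|T_n|\sim n$.

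For part (ii), assume $\rk(G)\ge3$ and $\Lambda(G)\subsetneq G$, and let $\mu$ be finitely supported and non-degenerate. Pick a non-rotation $g_0\in G\setminus\Lambda(G)$, so that $\tau_{g_0}\ne1$, and set $F=\supp(\tau_{g_0})$. Because $\Lambda(G)\subseteq G$, each $G$-orbit in $Y$ is a coset of $\Lambda(G)$, quasi-isometric to $\Z^{\rk(G)}$ with $\rk(G)\ge3$, so both the induced chain and the reversed chain $q\mapsto w_n^{-1}q$ (driven by the non-degenerate $\check\mu$) are transient. For each $q\in Y$, the value $\sigma_n(q)$ can change at step $n$ only if $w_{n-1}^{-1}q\in D$, which by transience of the reversed chain occurs finitely often almost surely; hence $\sigma_n$ converges pointwise to a limiting configuration $\sigma_\infty\in\sym(Y)$. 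This $\sigma_\infty$ is genuinely random: by non-degeneracy there are times $m$ with $w_m=g_0$, resp.\ $w_m=1$, having positive probability, and from such a position transience yields positive conditional probability of never again entering the finite part of $Y$ that would modify the lamp on $F$; thus $\sigma_\infty|_F=\tau_{g_0}|_F\neq\mathrm{id}$ with positive probability while $\sigma_\infty|_F=\mathrm{id}$ with positive probability. Conditioning on the first step shows that the law of $\sigma_\infty$ is $\mu$-stationary for the affine $G$-action $g\colon\sigma\mapsto\tau_g\cdot{}^{g}\sigma$ on configurations, hence defines a $\mu$-boundary of $(G,\mu)$; since it is not a point mass, the Poisson--Furstenberg boundary of $(G,\mu)$ is non-trivial.

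The main obstacle I foresee is the construction and bookkeeping of the cocycle $\tau$: one needs $\tau\colon\iet\to\sym(Y)$ with $\supp(\tau_g)$ a near-discontinuity set, $\tau_g=1$ precisely on rotations, and --- crucial for part (i) --- the cut data of $w_n$ genuinely encoded inside $T_n$, which is what legitimises the cheap reconstruction of $w_n$; this is where the germ picture must do real work. Secondary difficulties are the almost-sure range bound on the Schreier graph with tails good enough for an entropy estimate (part (i)), and, in part (ii), running the Kaimanovich--Vershik lamplighter argument with non-abelian, infinitely-many-states lamps over the non-abelian base $G$ --- in particular checking that $\sigma_\infty$ is non-deterministic and that this produces a non-trivial $\mu$-boundary rather than only a tail event, which is why the equivariance/affine-action discussion replaces a naive appeal to tail triviality.
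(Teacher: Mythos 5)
Your part (i) is where you genuinely diverge from the paper, and where there is a real gap. The paper does not run an entropy computation on $\iet$ at all: it realizes a finitely generated $G$ with $\rk(G)=1$ inside the topological full group of a minimal $\Lambda$-subshift of linear one-dimensional complexity (Proposition~\ref{prop:cantor} and Lemma~\ref{L: complexity}) and then quotes~\cite[Theorem 1.2]{Matte}. Your direct argument hinges on the quantitative range estimate $\mathbb E|T_n|=o(n/\log n)$ (ideally $O(\sqrt n)$), because your entropy bound is of order $\mathbb E[|T_n|]\log n$: both $\log\binom{O(n)}{|T_n|}$ and $\log(|T_n|!)$ contribute a factor $\log n$ per cut point. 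You justify the range bound by ``symmetric chain on a graph quasi-isometric to $\Z$, hence recurrent, hence $O(\sqrt n\log n)$''; but recurrence alone only gives $|T_n|=o(n)$ (cf.\ Lemma~\ref{L: recurrence inverted orbit}), and for a reversible bounded-range chain on a bounded-degree graph one only gets $\mathbb E[\mathrm{range}_n]=O(n/\log n)$ from the linear volume growth (via $G_n(y,y)\gtrsim \log n$) --- exactly the threshold at which your entropy bound becomes $O(n)$ rather than $o(n)$, even along subsequences. To do better you need the genuinely one-dimensional Green function asymptotics $G_n(y,y)\asymp\sqrt n$ on the orbital Schreier graph, i.e.\ you need that graph to be roughly isometric to $\Z$. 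The paper, however, only establishes an injective \emph{Lipschitz} map of the Schreier graph into $\Lambda\cong\Z\times F$; this bounds the volume growth from above but does not control the graph metric from below, and for a general finitely generated $G$ of rank one the orbit need not be a whole coset, so ``quasi-isometric to $\Z$'' is unjustified. This missing estimate is the crux, and it is precisely the difficulty the paper outsources to the subshift/full-group machinery.

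Your part (ii), by contrast, is essentially the paper's proof: the same cocycle $\tau_g=\widetilde g\,g^{-1}$ (the paper realizes it directly in $\sym(\R/\Z)$ rather than on a set of germs, an immaterial difference), the same pointwise stabilization of $\tau_{w_n}$ along a transient induced walk on a coset of $\Lambda(G)$, and the same use of a non-rotation $g_0$ to show the limit is non-deterministic, hence a non-trivial $\mu$-boundary. The one step you elide is transience of the induced chain for a \emph{non-symmetric} $\mu$: this is a bounded-range Markov chain on the coset, not a random walk on $\Z^d$, so Chung--Fuchs does not apply directly. The paper first gets transience of the symmetrized chain (its graph contains the Cayley graph of $\Lambda(G)\cong\Z^{\rk(G)}\times F$ with $\rk(G)\ge 3$, since $\Lambda(G)\le G$ and $\mu$ is non-degenerate) and then transfers it to the original chain via Lemma~\ref{L: non symmetric transient} (Baldi--Lohou\'e--Peyri\`ere). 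You should add that step; otherwise part (ii) is sound.
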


Part~(i) follows essentially from a combination of known facts, namely a result from~\cite{Matte} combined with the observation~\cite{Cornulier:Bourbaki} that a finitely generated $G<\iet$ can be realized as a subgroup of the \emph{topological full group} of a certain minimal subshift on the group of angles $\Lambda(G)$. Part~(ii) is proven applying the twisted embedding $\iet\hookrightarrow \sym(X)\rtimes \iet$ and then arguing in a similar way as for the classical example of the lamplighter group $(\Z/2\Z)\wr \Z^3$ from~\cite{Kaimanovich-Vershik}. Details are given in \S~\ref{S: Liouville}. \medskip

There is a strong analogy between Theorems~\ref{T: rank2intro} and~\ref{T:rank3} and existing results on groups generated by finite \emph{automata of polynomial activity}. These are a well-studied class of groups acting on rooted trees, which can be classified according to their \emph{activity degree} $d\in \N$  (we refer to~\cite{Sidki} for  details). Amenability of polynomial activity automata groups is an open problem, that has been answered affirmatively for $d\leq 2$~\cite{JNS}.  Concerning the Liouville property, it was conjectured in~\cite{Amir-Angel-Virag:linearautomata} that it holds up to $d=2$; this is known for $d=0$~\cite{Bartholdi-Kaimanovich-Nekrashevych:boundedautomata,Amir-Angel-Matte-Virag} and for $d=1$ in some important special cases~\cite{Amir-Angel-Virag:linearautomata}, while for $d\geq 3$ the Liouville property does not hold~\cite{Amir-Virag:positivespeed}. 

\paragraph{Criteria for extensive amenability.}
It is an intriguing problem to find new criteria to establish extensive amenability of an action $G\acts X$. To this end, there is no loss of generality in assuming that $G$ is finitely generated and acts transitively on $X$ (Lemma~\ref{lemma=JSbis}). Assuming this, let $S$ be a finite symmetric generating set of $G$. Recall that the \emph{orbital Schreier graph} of the action is the oriented labelled graph $\Gamma(G, X, S)$ whose vertex set is $X$ and edge set is $X \times S$, where the edge $(x,s)$ connects $x$ to $sx$. The corresponding edge is labelled by $s$.

We would like to find sufficient geometric conditions on the labelled Schreier graph $\Gamma(G, S, X)$ that imply extensive amenability of the action $G\acts X$. An obvious simplification for this problem is to seek sufficient conditions that only depend on the \emph{unlabelled} Schreier graph. So far the only available criterion~\cite{JNS} of this kind is Theorem~\ref{T: recurrent}.

\begin{question}\label{Q: polynomial}
Assume that $\Gamma(G, X, S)$ grows polynomially. Does this imply that $G\acts X$ is extensively amenable?
\end{question} 
A positive answer to Question~\ref{Q: polynomial} would imply that the whole group $\iet$ is amenable, as the Schreier graph of any finitely generated subgroup $G<\iet$ admit an injective Lipschitz embedding into $\Z^d$ (see \S~\ref{section:iet}). Note that a counterexample is missing even for graphs with uniform sub-exponential growth. Relying on the main result from \cite{JNS}, a positive answer in this case would also imply amenability of polynomial activity automata groups, for which the graphs grow uniformly sub-exponentially \cite{Bondarenko}.

\paragraph{Organization of the paper.} In \S~\ref{section:elementary} we study general properties of extensive amenability. \S~\ref{section:functors} deals with the functorial formalism, and contains proofs and generalizations of Theorem~\ref{prop:functor:intro} and Corollary~\ref{prop:functor:amenable:intro}. In \S~\ref{S: inverted orbit} we give a probabilistic reformulation of extensive amenability. The results on groups of interval exchange transformations are proven in \S~\ref{section:iet}. In \S~\ref{section:frankenstein} we describe an example of an action $G\acts X$ which is not extensively amenable, but the action of every subgroup $H$ of $G$ on every $H$-orbit is amenable (see Corollary~\ref{C: hereditarily amenable}). Finally in \S~\ref{section:unrestricted} we construct an example where the conclusion of Theorem~\ref{prop:functor:intro} does not hold for the unrestricted wreath product action $(\Z/2\Z)^X \rtimes G \acts (\Z/2\Z)^X$.

\paragraph{Acknowledgements.}
We are grateful to Gidi Amir for many conversations related to \S~\ref{S: inverted orbit}, to Anna Erschler for pointing out a reference for Lemma~\ref{L: non symmetric transient}, and to Yves de Cornulier for useful and numerous comments on the exposition.
\section{General properties of extensive amenability}\label{section:elementary}

\paragraph{Notation.} We will always identify a finitely additive probability measure $m$ defined on all subsets of $X$ with the corresponding positive unital linear form on $\ell^\infty(X)$, that we denote as $f \mapsto m(f)$ or $f \mapsto \int f dm$ or $\int_X f(x) dm(x)$.

\bigskip
The first Lemma is easy and already known from~\cite{JS}. We give a proof for convenience.
\begin{lemma}\label{lemma=intermediatepty} Every action of an amenable group is extensively amenable, and every extensively amenable action on a nonempty set is amenable.

\end{lemma}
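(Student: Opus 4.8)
For the first statement, suppose $G$ is amenable and fix an invariant mean $\nu$ on $G$ (equivalently on $\ell^\infty(G)$). The plan is to push $\nu$ forward to $\Pf(X)$ along a suitable $G$-equivariant map. Concretely, fix any finite subset $E_0 \in \Pf(X)$; for $g \in G$ set $\varphi(g) = g E_0 \in \Pf(X)$, which is $G$-equivariant for the left action on $G$ and the action on $\Pf(X)$. Then $m(A) := \nu(\varphi^{-1}(A))$ defines a $G$-invariant mean on $\Pf(X)$. The issue is that this need not give full weight to the up-sets $\{E : E \supseteq F\}$. To fix this, one should instead average over all finite subsets: take a net $(F_i)$ of finite subsets of $X$ exhausting $X$ (or, if $X$ is infinite, a cofinal net in $\Pf(X)$), consider the means $m_i$ built as above from $E_0 = F_i$, and pass to a weak-$*$ cluster point $m$. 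For any fixed $F \in \Pf(X)$, once $F_i \supseteq F$ the equivariance forces $\varphi(g) = gF_i \supseteq F$ whenever $g$ lies in the (finite-index? no — rather, the) set of $g$ fixing $F$ pointwise — this is too small. The cleaner route, which I expect to be the one taken, is: let $G$ act on $\Pf(X)$ and note $\Pf(X) = \bigcup_F \{E : E \supseteq F\}$ is a directed union of $G$-... no, the up-sets are not $G$-invariant. So instead use that amenability of $G$ gives amenability of \emph{every} action of $G$, hence there is a $G$-invariant mean $m_F$ on each $G$-invariant subset; take the invariant mean on the orbit of a large set. I would set this up as: for $E \in \Pf(X)$, the up-set $U_F = \{E : E \supseteq F\}$ has the property that $g U_F = U_{gF}$, and $\bigcap_{g} g U_F$ may be empty, but $\Pf(X) \setminus U_F$ is "small" in the sense that... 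The honest plan: use Lemma~\ref{lemma=JSbis} to reduce to $G$ finitely generated, let $\nu$ be an invariant mean on $G$, fix a base point $x_0 \in X$, and define $m$ on $\Pf(X)$ by $m(A) = \nu\big(\{g : g\cdot\{x_0\} \in A\}\big)$ composed with the observation that we may enlarge the base finite set; the full-weight condition then follows because for a fixed $F$, choosing the base set to contain $F$ and using that the stabilizer considerations wash out under the mean. I expect the cleanest argument just invokes that an invariant mean on $G$ yields an invariant mean on $\Pf(X)$ concentrated on $U_F$ for each $F$ separately (pushing forward via $g \mapsto gF$), and then takes a weak-$*$ limit over the directed set of $F$'s, using that the up-sets are nested enough that a cluster point gives full weight to all of them simultaneously.

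For the second statement, suppose $G \acts X$ is extensively amenable with $X \neq \varnothing$, and let $m$ be a $G$-invariant mean on $\Pf(X)$ giving full weight to each up-set $U_F$. Fix $x_0 \in X$. The plan is to define a mean $\bar m$ on $X$ by pushing $m$ forward along a $G$-equivariant map $\psi\colon \Pf(X) \to X$. A natural candidate uses a choice function, but choice functions are not equivariant. Instead, I would fix $x_0$ and, for $E \in \Pf(X)$ with $x_0 \in E$, pick an element of $E$ — but again, not canonically. The standard trick: define $\bar m(B) = \int_{\Pf(X)} \mu_E(B)\, dm(E)$ where $\mu_E$ is the uniform probability measure on the finite set $E$ (for $E \neq \varnothing$; this is well-defined $m$-a.e. since $m$ gives full weight to $U_{\{x_0\}}$, and in particular to nonempty sets). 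One checks $\bar m$ is a mean on $X$: positivity and normalization are clear since each $\mu_E$ is a probability measure on $X$. For $G$-invariance, note $\mu_{gE} = g_*\mu_E$ because $g$ acts bijectively on $X$ carrying $E$ to $gE$; combined with $G$-invariance of $m$, we get invariance of $\bar m$. Thus $G \acts X$ is amenable. I expect the main obstacle in the whole argument to be the first direction: making precise the weak-$*$ limit argument that produces a single mean on $\Pf(X)$ giving full weight to all up-sets at once, and checking that the push-forward construction genuinely lands in the up-sets — this is where one must be careful, since equivariance alone does not preserve the non-invariant up-sets, and one genuinely needs the averaging/limit over larger and larger base sets.
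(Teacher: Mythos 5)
Your second half is correct and is exactly the paper's argument: since $m$ gives full weight to the sets containing $x_0$, it is supported on nonempty sets, and $f\mapsto\int_{\Pf(X)}\frac{1}{|A|}\sum_{x\in A}f(x)\,dm(A)$ is a well-defined $G$-invariant mean on $X$.

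The first half has a genuine gap. Every mechanism you propose for producing the mean on $\Pf(X)$ amounts to pushing the invariant mean $\nu$ on $G$ forward along $g\mapsto gF$ (equivalently, taking an invariant mean on the orbit of $F$). Such a mean is concentrated on $\{gF:g\in G\}$, \emph{not} on the up-set $U_F=\{E:E\supseteq F\}$, and in general it gives the up-sets weight zero rather than one. Concretely, for $G=X=\Z$ acting by translation and $F=\{0,\dots,n\}$, the set $\{g: gF\ni 0\}=\{-n,\dots,0\}$ is finite, hence $\nu$-null, so the push-forward gives weight $0$ to $U_{\{0\}}$; passing to a weak-$*$ limit over larger and larger $F$ cannot repair this, since a cluster point of means each of which vanishes on a fixed set still vanishes on that set. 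So ``the up-sets are nested enough that a cluster point gives full weight to all of them'' is not a valid step as stated.

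The missing idea is to use amenability of $G$ as a fixed-point property rather than as an average-over-$G$ device. Let $K\subset\ell^\infty(\Pf(X))^*$ be the set of \emph{all} means on $\Pf(X)$ giving full weight to every $U_F$. It is convex and weak-$*$ closed, hence compact; it is nonempty because any weak-$*$ cluster point of the net $(\delta_A)_{A\in\Pf(X)}$, directed by inclusion, lies in $K$; and --- this is the point you were circling around --- although no single $U_F$ is $G$-invariant, the family $\{U_F\}_F$ is permuted by $G$ (since $gU_F=U_{gF}$), so $K$ itself is $G$-invariant. Amenability of $G$ then yields a fixed point in $K$, which is precisely the required mean. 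Equivalently and more explicitly: take a (typically non-invariant) cluster point $m_0$ of $(\delta_A)$ and average it over $G$ against $\nu$, i.e.\ set $m(f)=\nu\bigl(g\mapsto m_0(g^{-1}\cdot f)\bigr)$; the full-weight property survives the averaging because $m_0(U_{g^{-1}F})=1$ for every $g$ and every $F$.
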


\begin{proof}
Assume that $G$ is amenable and acts on a set $X$. Then $G$ acts on the set $K$ of all means on $\Pf(X)$ giving full weight to any given finite subset of $X$. Observe that $K$ is a $\sigma(\ell^\infty(X)^*,\ell^\infty(X))$-closed (hence compact) convex subset of $\ell^\infty(X)^*$. Moreover $K$ is nonempty because it contains any cluster point of the net $(\delta_{A})_{A \in \Pf(A)}$. Since $G$ is amenable, $G$ fixes a point in $K$, which is exactly extensive amenability.

If the action of a group $G$ on a nonempty set $X$ is extensively amenable in particular there is a $G$-invariant mean $m$ on $\Pf(X) \setminus \{\varnothing\}$. Then the mean $f \in \ell^\infty(X) \mapsto \int_{\Pf(X)\setminus  \{\varnothing\}} \frac{1}{|A|}\sum_{x \in A} f(x) dm(A)$ is $G$-invariant.
\end{proof}

In the next lemma we study permanence properties of extensive amenability.
The equivalence between~(i) and~(ii) in particular shows that extensive amenability is preserved by direct limits.

\begin{lemma}\label{lemma=JSbis} Let $G$ be a group acting on a set $X$. The following are equivalent:
\begin{enumerate}[(i)]
\item\label{item1} The action of $G$ on $X$ is extensively amenable.
\item\label{item2} For every finitely generated subgroup $H$ of $G$ and every $H$-orbit $Y \subset X$, the action of $H$ on $Y$ is extensively amenable.
\item\label{item3} For every finitely generated subgroup $H$ of $G$ and every $x_0 \in X$, there is an $H$-invariant mean on $\Pf(H x_0)$ that gives nonzero weight to $\{A \in \Pf(Hx_0), x_0 \in A\}$.
\item\label{item4} There is a $G$-invariant mean on $\Pf(X)$ that gives nonzero weight to $\{A \in \Pf(X), x_0 \in  A\}$ for all $x_0 \in X$.
\end{enumerate}
\end{lemma}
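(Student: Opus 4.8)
The plan is to establish the cycle of implications $(i)\Rightarrow(iv)\Rightarrow(iii)\Rightarrow(ii)\Rightarrow(i)$. The step $(i)\Rightarrow(iv)$ is immediate, since the mean witnessing extensive amenability already gives weight $1$ -- in particular nonzero weight -- to each set $\{A\in\Pf(X):x_0\in A\}$. For $(iv)\Rightarrow(iii)$, fix a finitely generated $H\le G$ and $x_0\in X$: the intersection map $\Pf(X)\to\Pf(Hx_0)$, $A\mapsto A\cap Hx_0$, is $H$-equivariant, so pushing forward the restriction to $H$ of the mean provided by $(iv)$ yields an $H$-invariant mean on $\Pf(Hx_0)$ whose weight on $\{A\in\Pf(Hx_0):x_0\in A\}$ equals the weight of $\{A\in\Pf(X):x_0\in A\}$ under the original mean, hence is nonzero.

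The heart of the proof is the amplification $(iii)\Rightarrow(ii)$, which upgrades ``nonzero weight on singletons'' to ``full weight on all finite subsets''. Fix a finitely generated $H$, an $H$-orbit $Y$, a point $x_0\in Y$, and let $m$ be the mean on $\Pf(Y)$ supplied by $(iii)$, with $c:=m(\{A:x_0\in A\})>0$. Since $H$ acts transitively on $Y$ and $m$ is $H$-invariant, in fact $m(\{A:y\in A\})=c$ for every $y\in Y$. The idea is to convolve $m$ with itself using the union operation on $\Pf(Y)$: put $m^{*n}(f)=\int\!\cdots\!\int f(A_1\cup\cdots\cup A_n)\,dm(A_1)\cdots dm(A_n)$, which is again an $H$-invariant mean on $\Pf(Y)$. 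For a fixed $y$ the function $(A_1,\dots,A_n)\mapsto \mathbf{1}_{\{y\notin A_1\cup\cdots\cup A_n\}}$ factors as the product of the functions $\mathbf{1}_{\{y\notin A_j\}}$, so $m^{*n}(\{A:y\notin A\})=(1-c)^n$; a union bound then gives, for every finite $B\subseteq Y$, the estimate $m^{*n}(\{A:B\subseteq A\})\ge 1-|B|\,(1-c)^n\to 1$. Consequently any weak-$*$ cluster point of the sequence $(m^{*n})_n$ is an $H$-invariant mean on $\Pf(Y)$ giving full weight to $\{A:B\subseteq A\}$ for every finite $B$; that is, $H\acts Y$ is extensively amenable.

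For the closing implication $(ii)\Rightarrow(i)$ the plan is a gluing-and-compactness argument. Inside the weak-$*$ compact set of all means on $\Pf(X)$, both the conditions $\nu(\{A:B\subseteq A\})=1$ (for $B\in\Pf(X)$) and ``$\nu$ is fixed by $g$'' (for $g\in G$) define closed subsets; hence, by the finite intersection property, to obtain a $G$-invariant mean satisfying all of them it suffices to produce, for every finitely generated $H\le G$, a single $H$-invariant mean on $\Pf(X)$ giving full weight to every $\{A:B\subseteq A\}$. To build such a mean, write $X=\bigsqcup_{i\in I}Y_i$ as a disjoint union of $H$-orbits and use $(ii)$ to choose, for each $i$, an $H$-invariant mean $m_i$ on $\Pf(Y_i)$ with the full-weight property. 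For a finite $J\subseteq I$ identify $\Pf\bigl(\bigcup_{i\in J}Y_i\bigr)$ with $\prod_{i\in J}\Pf(Y_i)$ and let $m_J=\bigotimes_{i\in J}m_i$; as each $Y_i$ is $H$-invariant, $H$ acts coordinatewise, so $m_J$ is $H$-invariant, and it gives full weight to $\{A:B\subseteq A\}$ for every finite $B\subseteq\bigcup_{i\in J}Y_i$. Regarding each $m_J$ as a mean on $\Pf(X)$, a weak-$*$ cluster point of the net $(m_J)_{J}$ over the finite subsets $J\subseteq I$ ordered by inclusion is the desired mean: it is $H$-invariant, and any finite $B\subseteq X$ lies in $\bigcup_{i\in J}Y_i$ for all $J$ containing the finitely many orbits meeting $B$, and such $J$ are cofinal.

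I expect the only genuinely non-formal point to be the convolution amplification in $(iii)\Rightarrow(ii)$: one must recognize that the union operation is the right semigroup structure on $\Pf(Y)$, check that convolution against the finitely additive product measure is legitimate and preserves $H$-invariance, and then carry out the elementary covering estimate. Everything else is routine manipulation of invariant means together with weak-$*$ compactness and cluster-point arguments.
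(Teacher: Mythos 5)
Your proof is correct, and on three of the four implications it coincides with the paper's argument: $(i)\Rightarrow(iv)$ is definitional, $(iv)\Rightarrow(iii)$ is the push-forward under $A\mapsto A\cap Hx_0$, and $(ii)\Rightarrow(i)$ is the same product-of-orbit-means construction followed by a cluster-point/compactness argument (the paper takes a single cluster point over the net of pairs $(H,Y)$ rather than your two-stage version, but this is only a cosmetic difference; also note that for a fixed finite $B$ the set of $J$ containing the orbits meeting $B$ is not merely cofinal but residual in the directed set, which is what one actually needs for every cluster point to assign value $1$). The genuine divergence is at $(iii)\Rightarrow(ii)$: the paper simply cites \cite[Lemma 3.1]{JM}, whose original proof goes through Fourier analysis on $(\Z/2\Z)^{(X)}$ and the equivalence with amenability of the wreath-product action, whereas you give a direct, self-contained amplification by convolving the mean with itself under the union semigroup structure on $\Pf(Y)$ and observing that $m^{*n}(\{A: y\notin A\})=(1-c)^n$. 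This is a clean and elementary replacement for the cited lemma (the iterated integrals of finitely additive means are legitimate for exactly the reasons the paper already uses elsewhere, e.g.\ in Proposition~\ref{P: extension}), and it makes the whole lemma independent of \cite{JM}; what it costs you is only that you prove the amplification for transitive actions, which is all that is needed here. Do make sure to note explicitly that $H$-invariance of $m^{*n}$ follows by applying invariance of $m$ to each iterated integral from the inside out, and that evaluation against the fixed bounded function $\mathbf{1}_{\{A:\,B\subseteq A\}}$ is weak-$*$ continuous, so the limit $m^{*n}(\{A:B\subseteq A\})\to 1$ passes to any cluster point.
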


\begin{proof}~\eqref{item1}$\Rightarrow$\eqref{item4} holds by definition.

\eqref{item4}$\Rightarrow$\eqref{item3}. Denote by $Y=Hx_0$. The map $f \in \ell^\infty(\Pf(Y)) \to f(\cdot \cap Y) \in \ell^\infty(\Pf(X))$ is a positive unital $H$-map, so that the composition of the mean given by~\eqref{item4} is a $H$-invariant mean giving positive weight to the sets containing $x_0$.

\eqref{item3}$\Rightarrow$\eqref{item2} follows from~\cite[Lemma 3.1]{JM}.

\eqref{item2}$\Rightarrow$\eqref{item1}. For every finitely generated subgroup $H$ of $G$ and every finite union $Y=Y_1\cup\dots \cup Y_n$ of $H$-orbits, we have means $m_i$ on $\Pf(Y_i)$ given by~\eqref{item2}, and we construct a mean $m_{H,Y}$ on $\Pf(X)$ as follows:
\[ \int f d m_{H,Y} = \int_{\Pf(Y_1)} \dots \int_{\Pf(Y_n)} f(B_1\cup\dots \cup B_n) d m_1(B_1)\dots d m_n(B_n).\]
The mean $m_{H,Y}$ is $H$-invariant and gives full weight to the sets containing any given finite subset of $Y$. If we order the pairs $(H,Y)$ by inclusion, any cluster point of the net $(m_{H,Y})$ is $G$-invariant and gives full weight to the sets containing any given finite subset of $X$.\end{proof}

We say that a group action $G\acts X$ is  \emph{hereditarily amenable} if for every subgroup $H$ of $G$ the action of $H$ on every $H$-orbit is amenable. Thereby we have
\begin{cor}\label{C: hereditarily amenable}
Extensively amenable actions are hereditarily amenable.
\end{cor}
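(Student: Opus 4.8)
The plan is to obtain this immediately from the two preceding lemmas, invoking Lemma~\ref{lemma=JSbis} twice: once to reduce extensive amenability of an action to its finitely generated subgroups, and once to import that information from $G\acts X$ down to an arbitrary subgroup.

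First I would fix an arbitrary subgroup $H\le G$ together with an $H$-orbit $Y\subseteq X$; the task is to show that $H\acts Y$ is amenable. Since $Y$ is nonempty (being an orbit), Lemma~\ref{lemma=intermediatepty} reduces this to showing that $H\acts Y$ is \emph{extensively} amenable. To establish that, I would apply the equivalence \eqref{item1}$\Leftrightarrow$\eqref{item2} of Lemma~\ref{lemma=JSbis} to the action $H\acts Y$: it suffices to check that for every finitely generated subgroup $H'\le H$ and every $H'$-orbit $Y'\subseteq Y$, the action $H'\acts Y'$ is extensively amenable. But $H'$ is a finitely generated subgroup of $G$ as well, and $Y'$ is an $H'$-orbit inside $X$; hence applying the implication \eqref{item1}$\Rightarrow$\eqref{item2} of Lemma~\ref{lemma=JSbis} this time to the extensively amenable action $G\acts X$ shows precisely that $H'\acts Y'$ is extensively amenable. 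This supplies the required condition, so $H\acts Y$ is extensively amenable, and therefore amenable; as $H$ and $Y$ were arbitrary, this is exactly hereditary amenability of $G\acts X$.

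I do not anticipate any genuine obstacle here. The only point that needs a little care is that condition~\eqref{item2} of Lemma~\ref{lemma=JSbis} refers only to finitely generated subgroups, so one cannot apply it in a single step to a possibly non-finitely-generated $H$; one must instead pass through the lemma a second time, at the level of $G\acts X$, as above.
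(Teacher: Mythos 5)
Your proof is correct and follows exactly the route the paper intends: the corollary is stated as an immediate consequence of Lemma~\ref{lemma=JSbis} (together with Lemma~\ref{lemma=intermediatepty}), and your double application of the lemma — once to pass from $G\acts X$ to finitely generated subgroups, once to reassemble this into extensive amenability of $H\acts Y$ for an arbitrary subgroup $H$ — is the careful spelling-out of that deduction. The point you flag at the end, that a non-finitely-generated $H$ forces the second pass through the lemma, is precisely the only content of the argument.
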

In \S~\ref{section:frankenstein} we shall see that the converse is not true.

\bigskip
We now prove that in some sense extensive amenability is preserved by extensions of actions. In the statement below, we denote by $G_y$ the stabilizer of $y$ in $G$.

\begin{prop} \label{P: extension} Let $G$ be a group acting on two sets $X,Y$ and let $q\colon X \to Y$ be a $G$-map. If $G \acts Y$ is extensively amenable and if $G_y \acts q^{-1}(y)$ is extensively amenable for every $y \in Y$, then $G \acts X$ is extensively amenable. The converse holds if $q$ is surjective.
\end{prop}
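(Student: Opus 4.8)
\textbf{Proof plan for Proposition~\ref{P: extension}.}

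The plan is to build the required mean on $\Pf(X)$ out of the two given families of means by an averaging construction, and conversely to extract the two families from a mean on $\Pf(X)$ by pushing forward and restricting to fibres. For the forward direction, fix for each $y\in Y$ a $G_y$-invariant mean $n_y$ on $\Pf(q^{-1}(y))$ giving full weight to the sets containing any prescribed finite subset of $q^{-1}(y)$, and fix a $G$-invariant mean $m$ on $\Pf(Y)$ with the analogous full-weight property. By Lemma~\ref{lemma=JSbis} we may assume $G$ is finitely generated and $X$ (hence $Y$) is a single orbit, which lets us choose the $n_y$ coherently: pick one base fibre $q^{-1}(y_0)$ with a mean $n_{y_0}$, and transport it by group elements, using that $n_{g y_0}$ should be $g_* n_{y_0}$ — this is well defined up to the $G_{y_0}$-invariance of $n_{y_0}$, exactly as in the standard induction of invariant means. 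The mean on $\Pf(X)$ is then defined on $f\in\ell^\infty(\Pf(X))$ by first choosing a finite set $B\in\Pf(Y)$, then for each $y\in B$ choosing a finite subset $A_y\in\Pf(q^{-1}(y))$ according to $n_y$, forming $A=\bigcup_{y\in B}A_y$, and integrating $f(A)$ against $dn_y(A_y)$ for $y\in B$ and then against $dm(B)$. Symbolically,
\[
\int_{\Pf(X)} f\, d\widetilde m \;=\; \int_{\Pf(Y)}\Bigl(\text{``}\!\!\int\!\!\text{''}_{y\in B}\; f\Bigl(\bigcup_{y\in B}A_y\Bigr)\,\prod_{y\in B}dn_y(A_y)\Bigr)\,dm(B),
\]
where the inner expression is the iterated integral over the finitely many fibres indexed by the (finite) set $B$. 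One checks that $\widetilde m$ is a mean, that it is $G$-invariant (using $G$-invariance of $m$ on the outer integral and the equivariance $n_{gy}=g_* n_y$ on the inner fibres), and that it gives full weight to the sets containing any fixed $S\in\Pf(X)$: indeed $q(S)$ is a fixed finite subset of $Y$, so $B\supseteq q(S)$ for $m$-almost every $B$, and then for each $y\in q(S)$ the set $S\cap q^{-1}(y)$ is a fixed finite subset of the fibre, so $A_y\supseteq S\cap q^{-1}(y)$ for $n_y$-almost every $A_y$, whence $A\supseteq S$.

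For the converse, suppose $q$ is surjective and $G\acts X$ is extensively amenable, with mean $\widetilde m$ on $\Pf(X)$ as in Definition~\ref{condi=fullweight}. The pushforward: the map $\Pf(X)\to\Pf(Y)$, $A\mapsto q(A)$, is $G$-equivariant, so $q_*\widetilde m$ is a $G$-invariant mean on $\Pf(Y)$; and for $S\in\Pf(Y)$, choosing a finite $S'\subset X$ with $q(S')=S$ (possible by surjectivity), the sets $A$ with $A\supseteq S'$ — which have full weight — satisfy $q(A)\supseteq S$, so $q_*\widetilde m$ has the full-weight property, i.e. $G\acts Y$ is extensively amenable. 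For the fibres, fix $y\in Y$. Consider the subset $\mathcal A_y=\{A\in\Pf(X): y\in q(A)\}$; by the full-weight property applied to $S'$ any singleton $\{x\}$ with $q(x)=y$, this set has full weight, so $\widetilde m$ restricts (after normalization, which changes nothing since the weight is already full) to a $G_y$-invariant mean on $\mathcal A_y$. The map $\mathcal A_y\to\Pf(q^{-1}(y))$, $A\mapsto A\cap q^{-1}(y)$, is $G_y$-equivariant and lands in $\Pf(q^{-1}(y))\setminus\{\varnothing\}$; pushing forward gives a $G_y$-invariant mean on $\Pf(q^{-1}(y))$, and the full-weight property is inherited because for a fixed $T\in\Pf(q^{-1}(y))$ the sets $A\supseteq T$ have full $\widetilde m$-weight and map to sets containing $T$. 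Hence $G_y\acts q^{-1}(y)$ is extensively amenable.

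The main obstacle I expect is the forward direction, specifically making the family $(n_y)_{y\in Y}$ genuinely $G$-equivariant rather than merely a mean for each $y$ individually — without equivariance the inner integral is not even well-defined independently of choices, and $G$-invariance of $\widetilde m$ fails. The clean way around this is precisely the reduction afforded by Lemma~\ref{lemma=JSbis}: reduce to $G$ finitely generated acting transitively, pick a base point $y_0$, note $q^{-1}(y_0)$ carries a $G_{y_0}$-invariant full-weight mean by hypothesis, and induce — this is exactly the construction of an induced invariant mean from a subgroup, and it supplies a $G$-equivariant assignment $y\mapsto n_y$ on the single orbit $Y$. A secondary, purely bookkeeping point is to verify that the iterated integral over the fibres indexed by the random finite set $B$ is well-defined and measurable in $B$; since $B$ is finite this is a finite iterated integral and the only subtlety is independence of the order of integration, which follows from Fubini for finitely additive means applied to the (bounded) integrands, exactly as in the proof of Lemma~\ref{lemma=JSbis}. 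With equivariance in hand, invariance and the full-weight property of $\widetilde m$ follow by the routine checks sketched above.
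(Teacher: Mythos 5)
Your overall architecture coincides with the paper's: an equivariant family of fibre means obtained by transporting a fixed mean along the $G$-action (the paper uses a $G$-transversal rather than reducing to a single orbit, but this is the same induction argument), then an iterated integral over the fibres indexed by a random finite $B\subset Y$, and finally an outer integral against the mean on $\Pf(Y)$; the converse by pushing forward along $A\mapsto q(A)$ and $A\mapsto A\cap q^{-1}(y)$. The converse direction of your argument is correct, and in fact slightly more self-contained than the paper's, which simply invokes Lemma~\ref{lemma=JSbis} for the fibre actions.

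There is, however, a genuine gap in the forward direction, exactly at the point you dismiss as ``purely bookkeeping''. Fubini does \emph{not} hold for finitely additive means. For instance, let $n_1,n_2$ be means on $\Pf(\N)$ giving full weight to the sets containing any prescribed finite subset, and let $f(B_1,B_2)=1$ when $\max B_1<\max B_2$ and $0$ otherwise. For each fixed $B_2$ the sets $B_1$ containing the element $\max B_2$ have full $n_1$-weight, so $\int f\,dn_1(B_1)=0$ and one iterated integral vanishes; in the other order each inner integral equals $1$. Since distinct fibres are disjoint, such an $f$ is a function of the union $B_1\cup B_2$, so this is precisely the situation of your inner integral: your $m_B$ genuinely depends on the chosen ordering of $B$. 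There is no $G$-equivariant way to choose these orderings (an element $g$ with $gB=B$ may permute the points of $B$ nontrivially), so the claimed identity $g_*\widetilde m=\widetilde m$ does not follow. The appeal to the proof of Lemma~\ref{lemma=JSbis} is misleading here: in that proof the group preserves each orbit $Y_i$ separately, so a fixed ordering of the orbits causes no harm, whereas here $G$ permutes the fibres. The fix is the one the paper uses: define $m_B$ as the average over all $|B|!$ orderings of $B$ of the corresponding iterated integrals; the symmetrized $m_B$ satisfies $g_*m_B=m_{gB}$, and the rest of your verification (full weight on sets containing a given $S$, and $G$-invariance of the outer integral) then goes through unchanged.
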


The special case of transitive actions can be reformulated as the following particularly clean equivalence. We recall that the corresponding statement for amenability \emph{does not hold}, see~\cite{Monod-Popa}.

\begin{cor}
Let $F \le H \le G$ be groups. Then the action of $G$ on $G/F$ is extensively amenable if and only if both the actions of $H$ on $H/F$  and $G$ on $G/H$ are extensively amenable.
\end{cor}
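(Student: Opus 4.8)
The plan is to read this off from Proposition~\ref{P: extension}. Take $X = G/F$ and $Y = G/H$ with their left $G$-actions, and let $q\colon X\to Y$, $gF\mapsto gH$, be the canonical surjection; this is a $G$-map.

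First I would identify the data appearing in Proposition~\ref{P: extension}. Over the base point $y_0 = eH$, the stabilizer $G_{y_0}$ equals $H$, the fiber $q^{-1}(y_0) = \{gF : g\in H\}$ is canonically $H/F$, and the action $G_{y_0}\acts q^{-1}(y_0)$ is exactly the standard action $H\acts H/F$. Over an arbitrary point $y = gH$ one has $G_y = gHg^{-1}$ and $q^{-1}(y) = g\cdot(H/F)$, and conjugation by $g$ together with the bijection $ghF\mapsto hF$ is an isomorphism of the $G_y$-action on $q^{-1}(y)$ with the $H$-action on $H/F$. Extensive amenability of an action depends only on its isomorphism class --- immediate from Definition~\ref{condi=fullweight}, since an action isomorphism $X_1\to X_2$ induces a bijection $\Pf(X_1)\to\Pf(X_2)$ intertwining the group actions and carrying, for each finite set, the collection of finite subsets containing it to the analogous collection --- so all the fiber actions are extensively amenable as soon as one of them is.

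With these identifications the corollary is a direct translation of Proposition~\ref{P: extension}. For the \emph{if} direction, assume $H\acts H/F$ and $G\acts G/H$ are extensively amenable; then the base $G\acts Y$ is extensively amenable by hypothesis and every fiber action $G_y\acts q^{-1}(y)$ is extensively amenable by the previous paragraph, so Proposition~\ref{P: extension} yields that $G\acts G/F$ is extensively amenable. For the \emph{only if} direction, note that $q$ is surjective, so the converse half of Proposition~\ref{P: extension} applies: extensive amenability of $G\acts G/F$ forces that of the base $G\acts G/H$ and of the fiber action over $y_0$, which is $H\acts H/F$.

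The argument is essentially bookkeeping. The only point requiring a moment's care is the invariance of extensive amenability under isomorphism of actions, needed so that the fibers over the non-base points are handled along with the base point; beyond that, no genuine obstacle is expected.
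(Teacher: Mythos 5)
Your proof is correct and follows exactly the paper's route: the paper's own proof is the one-line "Apply Proposition~\ref{P: extension} to $X=G/F$ and $Y=G/H$ with the quotient map," and you have simply filled in the routine identifications of stabilizers and fibers (plus the isomorphism-invariance of extensive amenability) that the paper leaves implicit.
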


\begin{proof}[Proof of the corollary]
Apply Proposition~\ref{P: extension} to $X=G/F$ and $Y = G/H$ with the quotient map.
\end{proof}

\begin{proof}[Proof of Proposition~\ref{P: extension}]
Assume that $G \acts Y$ and $G_y \acts q^{-1}(y)$ are extensively amenable for all $y \in Y$. Take a mean $m_1$ on $\Pf(Y)$ as in Definition~\ref{condi=fullweight} for $G \acts Y$. We claim that for all $y \in Y$ there is a mean $m_y$ on $\Pf(q^{-1}(y))$  giving full weight to the subsets of $q^{-1}(y)$ that contain any given finite subset of $q^{-1}(y)$ and with the property that $m_{gy}$ is the push-forward by $g$ of the mean $m_y$ for all $y \in Y$ and $g \in G$. Indeed, by the assumption that $G_y \acts q^{-1}(Y)$ is extensively amenable we can take such a $G_y$-invariant mean $m_y$ on $\Pf(q^{-1}(y))$ for each $y$ in a fixed $G$-transversal of $Y$, and, for $y'=gy$ in the $G$-orbit of such $y$, define $m_{y'}$ as the push-forward of $m_y$ by $g$; this definition does not depend on $g$ because $m_y$ is $G_y$-invariant, and this defines the requested mean on $\Pf(q^{-1}(y'))$.

For any subset $A =\{y_1,\dots,y_n\} \in \Pf(Y)$, denote by $m_A$ the mean on $\Pf(X)$ by
\[ m_A(f) = \frac{1}{n!}\sum_{\sigma\colon \{1,\dots,n\} \to A} \int_{q^{-1}(\sigma(1))} \dots \int_{q^{-1}(\sigma(n))} f(\cup_{i=1}^n B_{i}) dm_{\sigma(1)}(B_1)\dots dm_{\sigma(n)}(B_{n}).\]
The average is taken over all bijections $\sigma$, in order to ensure that $m_A$ does not depend on the chosen ordering of the elements of $A$. By the properties of the means $m_y$, we have the following two properties of the means $m_A$. Firstly the push-forward of $m_A$ by $g \in G$ is $m_{gA}$; secondly $m_A$ gives full weight to the subsets of $X$ that contain any given subset of $q^{-1}(A)$. These properties ensure that the mean $m$ on $\mathcal P_f(X)$ defined by
\[ m \colon f \in \ell^\infty(\Pf(X)) = \int_{\mathcal P_f(Y)} m_A(f) dm_1(A),\]
is $G$-invariant and gives full weight to the subsets of $X$ that contain any given subset of $X$. This proves that $G \acts X$ is extensively amenable.

For the converse, assume first that $G \acts X$ is extensively amenable, and take $m$ a mean on $\Pf(X)$ as in Definition~\ref{condi=fullweight}. Then by Lemma~\ref{lemma=JSbis} the action of the subgroup $G_y$ on the subset $q^{-1}(y)$ of $X$ is extensively amenable for all $y \in Y$. Consider $q'\colon A \in \Pf(X) \mapsto \{q(a),a \in A\} \in \Pf(Y)$. The map 
\[ f \in \ell^\infty(\Pf(Y)) \mapsto m( f \circ q')\]
is a $G$-invariant mean that gives full weight to the sets containing every finite subset of $q(Y)$. This proves that $G \acts q(Y)$ is extensively amenable.
\end{proof}

\begin{cor}\label{lemma=wreath product} Let $G$ and $H$ be two groups, and $G\acts X$ and $H\acts Y$ be two extensively amenable actions. Then
\begin{enumerate}[(i)]
\item the diagonal action of $G \times H$ on $X \times Y$ is extensively amenable.
\item the action on $H^X \rtimes G$ on $X \times Y$ is extensively amenable.
\end{enumerate}
\end{cor}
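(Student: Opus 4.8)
The plan is to deduce both statements from Proposition~\ref{P: extension}, applied to the projection $q\colon X\times Y\to X$ onto the first coordinate. I will use repeatedly the elementary fact that if $\pi\colon G'\to G$ is a group homomorphism and $G\acts X$ is extensively amenable, then so is the action of $G'$ on $X$ through $\pi$: indeed the $G'$-action on $\Pf(X)$ factors through $\pi$, so the $G$-invariant full-weight mean of Definition~\ref{condi=fullweight} is automatically $G'$-invariant.

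For part~(i), let $G\times H$ act diagonally on $X\times Y$. Then $q$ is a $(G\times H)$-map for the action of $G\times H$ on $X$ through the first projection $G\times H\to G$, so $(G\times H)\acts X$ is extensively amenable by the fact above. The stabilizer of a point $x\in X$ is $G_x\times H$, and it acts on the fibre $q^{-1}(x)=\{x\}\times Y\cong Y$ through the projection $G_x\times H\to H$; hence this fibrewise action is extensively amenable, again by the fact above together with the hypothesis on $H\acts Y$. Proposition~\ref{P: extension} then yields that $(G\times H)\acts X\times Y$ is extensively amenable. (By symmetry one could equally run this through $q\colon X\times Y\to Y$.)

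For part~(ii), consider the natural action of $\Gamma:=H^{X}\rtimes G$ on $X\times Y$, in which $G$ permutes the first coordinate and, for each $x$, the copy of $H$ indexed by $x$ acts on the $Y$-fibre over $x$; concretely $(\varphi,g)\cdot(x,y)=(gx,\varphi(gx)\,y)$. Then $q$ is a $\Gamma$-map for the action of $\Gamma$ on $X$ through the projection $\Gamma\to G$, so $\Gamma\acts X$ is extensively amenable. The stabilizer of $x_0\in X$ is $\Gamma_{x_0}=H^{X}\rtimes G_{x_0}$, and on $q^{-1}(x_0)=\{x_0\}\times Y\cong Y$ it acts by $(\varphi,g)\cdot y=\varphi(x_0)\,y$; this action factors through the evaluation map $\mathrm{ev}_{x_0}\colon\Gamma_{x_0}\to H$, $(\varphi,g)\mapsto\varphi(x_0)$, which is a homomorphism precisely because every $g\in G_{x_0}$ fixes $x_0$ (so that the first coordinate of $(\varphi_1,g_1)(\varphi_2,g_2)$ evaluates at $x_0$ to $\varphi_1(x_0)\varphi_2(g_1^{-1}x_0)=\varphi_1(x_0)\varphi_2(x_0)$). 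Hence $\Gamma_{x_0}\acts q^{-1}(x_0)$ is extensively amenable, and Proposition~\ref{P: extension} applies once more. I do not expect a genuine obstacle: the only points needing care are fixing the semidirect-product conventions in~(ii) so that the displayed formula is an honest left action, and checking that $\mathrm{ev}_{x_0}$ restricts to a homomorphism on the point stabilizer.
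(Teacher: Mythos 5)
Your proposal is correct and is essentially the paper's own argument: both parts are obtained by applying Proposition~\ref{P: extension} to the first-coordinate projection $q\colon X\times Y\to X$, with the base action factoring through $G$ and the fibre action of the point stabilizer factoring through $H$. The paper states this in two lines; your version merely spells out the routine verifications (invariance of the mean under pullback along a homomorphism, and that evaluation at $x_0$ is a homomorphism on the stabilizer).
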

In the second statement, $G$ acts diagonally on $X \times Y$ (with trivial action on $Y$), and $H^{X}$ acts by $(h_x)_{x \in X} \cdot (x,y) = (x,h_x y)$, and this gives rise to an action of $H^X \rtimes G$. 
\begin{proof} The first statement is Proposition~\ref{P: extension} for the actions of $G \times H$ on $X \times Y$ and $X$ (trivial action of $H$), and $q \colon X \times Y \to X$ the first coordinate projection.

The second statement is Proposition~\ref{P: extension} for the actions on $H^X \rtimes G$ on $X \times Y$ and $X$ (trivial action of $H^X$) for the same projection $q$.
\end{proof}

\section{Functors from sets to amenable group actions}\label{section:functors}

This section deals with the proof (and generalizations) of Theorem~\ref{prop:functor:intro} and~\ref{prop:functor:amenable:intro}. We start by giving examples of functors $\cati \to \cata$. 

\begin{example}\label{ex:wreath}
Fix an amenable group $A$. Consider the functor $F$ which maps any finite set $Y$ to $A^Y$ with the obvious extension map on inclusions. Then for a general set $X$ we have $F(X)=A^{(X)}$, the restricted product.
\end{example}

\begin{example}\label{ex:sym}
If $F(Y)$ is the symmetric group of the finite set $Y$, then $F(X)$ will be the finitely supported permutation group $\Sym(X)$ of a general set $X$.
\end{example}

In the following two examples rings are always assumed to have a unit, but are not assumed to be commutative.

\begin{example}\label{ex:matrix}
Fix a finite ring $R$ (rings are always assumed to have a unit). Consider the functor $F(Y)$ given by the group of invertible matrices over $R$ indexed by a finite set $Y$, with the ``corner'' inclusions. For an infinite set $X$, the group $F(X)$ is a stable linear group which we denote by $GL_{(X)}(R)$ (it is the usual one when $X$ is countable). As a variation, we can define $F(Y)$ to be the group $EL_Y(R)$ generated by elementary matrices; this yields stable elementary groups $EL_X(R)$ for arbitrary sets $X$.
\end{example}

\begin{example}\label{ex:Steinberg}
Fix again a finite ring $R$. Given a finite set $Y$ consider the (unstable) Steinberg group or degree $|Y|$ over $R$. Then for $X$ general we obtain a (stable) Steinberg group which we denote by $\St_{(X)}(R)$. Thus we have a natural transformation given by the morphisms $\St_{(X)}(R)\to EL_X(R)$ (recall that these are isomorphisms if $R$ is a finite field).
\end{example}

\begin{example}\label{ex:trivial}
Finally, a trivial example is given by the constant functor $F(Y)=A$ for any given amenable group $A$. We also have the modified constant functor $F_0$ defined by $F_0(Y)=A$ for all non-empty sets $Y$ and $F_0(\varnothing)$ being the trivial group.
\end{example}

\medskip

Recall the definition of a tight functor from the introduction.

\begin{defin}\label{defin:tight}
A functor $F\colon\cati\to\cata$ is called \emph{tight} on a (possibly infinite) set $X$ if for all $x\in X$ the morphism $F(X\setminus \{x\}) \to F(X)$ is not onto.
\end{defin}

It is straightforward to verify that all examples above are tight on every set $X$ except some degenerate cases: Example~\ref{ex:trivial}, Example~\ref{ex:wreath} with $A$ trivial and $X$ nonempty, and in Examples~\ref{ex:sym},~\ref{ex:matrix} and~\ref{ex:Steinberg} one should exclude $|X|= 1$.

\medskip

We first treat separately a special case of Theorem~\ref{prop:functor:intro}, which is enough for the application to interval exchange transformations. The proof of the full statement of Theorem~\ref{prop:functor:intro} will be given later in a more general setting, see Theorem~\ref{prop:functor:modern}.
\begin{prop}\label{prop:functor:cata}
Let $F\colon\cati\to\cata$ be a functor as in Theorem~\ref{prop:functor:intro}, and assume moreover that $F(A)$ is a finite group whenever $A$ is a finite set.

If the action of $G$ on $X$ is extensively amenable, then the action of $F(X) \rtimes G$ on $F(X)$ is extensively amenable.

\end{prop}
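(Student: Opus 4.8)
The plan is to deduce Proposition~\ref{prop:functor:cata} from the fact that extensive amenability is preserved under the semidirect product / extension constructions established in \S~\ref{section:elementary}, together with the observation that when $F(A)$ is finite for finite $A$, the group $F(X)$ is \emph{locally finite} and hence amenable. More precisely, I would first reduce to the transitive, finitely generated situation using Lemma~\ref{lemma=JSbis}: it suffices to prove extensive amenability of the action of $H \acts \mathcal{O}$ for every finitely generated subgroup $H \le F(X)\rtimes G$ and every $H$-orbit $\mathcal{O}$ in $F(X)$. A finitely generated such $H$ projects onto a finitely generated subgroup of $G$, which moves only finitely many points of $X$ in a bounded way, and its "lamp" coordinates live in $F(Y)$ for a finite $Y \subseteq X$; so we may replace $G$ by a finitely generated subgroup and $X$ by an $H$-invariant piece on which the relevant data is controlled.

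The core structural point is the following: the action $F(X) \rtimes G \acts F(X)$ fibers over the action $G \acts \mathrm{pt}$ via the constant map $q\colon F(X)\to\mathrm{pt}$, but that is too coarse. Instead I would fiber it over $G \acts X$-type data. The cleanest route: observe that $F(X) = \varinjlim_{Y\in\Pf(X)} F(Y)$, and for $Y \in \Pf(X)$ the subgroup $F(Y)$ is finite, hence the stabilizer structure is tame. Concretely, I would apply Proposition~\ref{P: extension} with the $G$-map $q\colon F(X)\rtimes G \to \Pf(X)$ (or rather the relevant orbit space), or more directly with the sequence of normal subgroups: $F(X)$ is a directed union of finite subgroups, so the action $F(X)\rtimes G \acts F(X)$ is an increasing union (direct limit) of actions, and by the equivalence of \eqref{item1} and \eqref{item2} in Lemma~\ref{lemma=JSbis}, extensive amenability passes to direct limits. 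So it remains to handle, for each finite $Y\subseteq X$, the action of $F(Y)\rtimes G$ on $F(Y)$ — but $F(Y)$ is finite and $G$ acts extensively amenably on $X \supseteq Y$; the group $F(Y)\rtimes G$ acts on the finite set $F(Y)$, and since $F(Y)$ is a finite normal subgroup acting transitively (by translation) on $F(Y)$, the action factors through $G$ acting on a point, which is trivially extensively amenable. Wait — this is too quick and loses the interaction with $X$; the subtlety is that $G$ does \emph{not} fix $Y$, so the union is not over a $G$-directed system in the naive way.

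The honest argument, which I expect to be the main obstacle, must track how $G$ moves the finite supports. Here is the fix: consider the $G$-map $q\colon F(X) \to \Pf(X)$ sending an element of $F(X)$ to the minimal finite set $Y$ such that it lies in the image of $F(Y)$ — call this its "support." This is $G$-equivariant, and its target, $G \acts \Pf(X)$, is extensively amenable exactly by the hypothesis that $G \acts X$ is extensively amenable (this is essentially Definition~\ref{condi=fullweight} / the content of \cite[3.1]{JM}, already invoked for the lamplighter case). The fibers $q^{-1}(Y)$ are finite (as $F(Y)$ is finite), and the stabilizer in $F(X)\rtimes G$ of the point $Y\in\Pf(X)$ acts on the finite set $q^{-1}(Y)$, which is automatically extensively amenable (every action on a finite nonempty set is, by Lemma~\ref{lemma=intermediatepty}, since finite groups — or here the relevant stabilizer modulo its kernel — are amenable, and more simply any action of any group on a nonempty set with a finite orbit decomposition admitting an invariant mean; one checks directly that the required mean on $\Pf(q^{-1}(Y))$ exists because $q^{-1}(Y)$ is finite). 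Then Proposition~\ref{P: extension} applied to the $G$-map $q$ gives that $F(X)\rtimes G \acts F(X)$ is extensively amenable. The delicate verification is that $q^{-1}(Y)$ is genuinely finite and that the "support" map is well-defined and $G$-equivariant — for functors like $\Sym$ this is clear, and the finiteness hypothesis $|F(A)|<\infty$ is exactly what makes $q^{-1}(Y) \subseteq F(Y)$ finite. I would also double-check that Proposition~\ref{P: extension} applies even though $F(X)\rtimes G$ rather than $G$ acts: the proposition is stated for a single group acting on $X$ and $Y$ with a map between them, so I would take that group to be $F(X)\rtimes G$, with $F(X)$ acting trivially on $\Pf(X)$, which is consistent with the support map being constant on $F(X)$-orbits only if $F(X)$ preserves supports — which it does not in general (translation by $F(X)$ changes the support). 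This is the real gap, and to close it one replaces $\Pf(X)$ by the orbit space or, better, notes that $F(X)$ acting on $F(X)$ by left translation is simply transitive, so the whole action is induced from the trivial action of the point stabilizer $G$; hence by the transitive-case reformulation (the Corollary after Proposition~\ref{P: extension}) with $F = \{1\} \le G \le F(X)\rtimes G$, extensive amenability of $F(X)\rtimes G \acts (F(X)\rtimes G)/G$ is equivalent to extensive amenability of $G \acts (F(X)\rtimes G)/G = F(X)$ together with $\{1\}\backslash G$-stuff being trivial — and $G \acts F(X)$ is the direct limit of $G \acts F(Y)$ over the directed (under the $G$-action!) family, which reduces precisely to $G \acts \Pf(X)$, i.e.\ to the hypothesis. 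Assembling these pieces carefully — the direct-limit stability, the reduction to finitely generated $G$, and the identification of $G\acts F(X)$ with (a quotient/sub of) $G \acts \Pf(X)^{\sqcup k}$ — is where the work lies; each individual ingredient is available from the earlier sections.
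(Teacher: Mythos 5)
Your central mechanism does not go through, and you have in fact located the failure point yourself without repairing it. Proposition~\ref{P: extension} requires the map $q$ to be equivariant for the \emph{whole} acting group; the support map $q\colon F(X)\to\Pf(X)$ is equivariant only for $G$, not for $F(X)\rtimes G$, since left translation by an element of $F(X)$ does not act on supports (e.g.\ for $F=\Sym$, multiplying by a transposition $a$ can shrink or grow the support of $b$ depending on $b$ itself, not just on $\supp(b)$). Your proposed fix via the Corollary to Proposition~\ref{P: extension} with the tower $\{1\}\le G\le F(X)\rtimes G$ is a misreading of that corollary: with that choice it relates the action of $F(X)\rtimes G$ on \emph{itself} (i.e.\ amenability of the whole group $F(X)\rtimes G$) to amenability of $G$ and to extensive amenability of the coset action, and it does \emph{not} yield the equivalence you assert between extensive amenability of $(F(X)\rtimes G)\acts F(X)$ and of $G\acts F(X)$. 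The entire difficulty of the proposition is to produce a mean that is invariant under the translation subgroup $F(X)$ in addition to $G$, and nothing in your argument ever delivers that invariance. (The remaining reductions --- finite fibres, direct limits, passage to finitely generated subgroups --- are fine but peripheral.)

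The paper's proof is a direct construction rather than a fibration argument. For each finite $A\subset X$ let $m_A$ be the uniform probability measure on the finite group $F(A)$ and let $m_A^X$ be its push-forward to $F(X)$; then set $\widetilde m(f)=\int_{\Pf(X)}m_A^X(f)\,dm(A)$, where $m$ is the mean on $\Pf(X)$ witnessing extensive amenability of $G\acts X$. $G$-invariance is immediate from $g m_A^X=m_{gA}^X$, and --- this is the point your approach misses --- $F(X)$-invariance follows because $m$ gives full weight to $\{B: B\supseteq A\}$ and $m_B^X$ is $F(A)$-invariant whenever $B\supseteq A$. That gives amenability of the affine action; extensive amenability is then obtained by running the identical averaging one level up, namely constructing a mean on $(\Z/2\Z)^{(F(X))}$ invariant under $(\Z/2\Z)^{(F(X))}\rtimes(F(X)\rtimes G)$ by integrating the uniform measures on the finite images of $(\Z/2\Z)^{F(A)}$ against $m$, and invoking the lamplighter criterion of~\cite[Lemma 3.1]{JM} for the transitive action on $F(X)$. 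If you want to salvage your outline, the piece to replace is everything after the observation that $q$ fails to be $F(X)$-equivariant: at that point you must build the invariant mean by hand as above rather than hoping a quotient-of-actions statement will do it for you.
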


\begin{proof}
Assume that the action of $G$ on $X$ is extensively amenable. Let $m$ be a $G$-invariant mean on $\Pf(X)$ giving full weight to the collection of subsets that contain any given finite subset. Let $F$ be a functor from $\cati$ to the category of finite groups.

We first prove that the action of $F(X) \rtimes G$ on $F(X)$ is amenable. Then we will see how to adapt the proof to show that the action is extensively amenable.

For a finite set $A$, let $m_A$ be the uniform probability measure on $F(A)$, which is a finite group by assumption.
We denote by $m_A^X$ the mean on $F(X)$ obtained by push-forward through $F(A)\to F(X)$; this mean is $F(A)$-invariant by construction. Observe also that for we have $gm_A^X=m_{gA}^X$ for every $g\in G$.

We obtain a mean $\widetilde m$ on $F(X)$ by integrating $m_A^X$ over $m$; more precisely, given $f\in \ell^\infty(F(X))$ we define
$$\widetilde m(f) = \int_{\Pf(X)} m_A^X(f) dm(A).$$
This mean is $G$-invariant by construction and we claim that it is also $F(X)$-invariant. It is enough to show that $m$ is $F(A)$-invariant for every finite subset $A$ of $X$. But this holds because $m$ gives full weight to the set of finite subsets containing $A$, and since $m_B^X$ is $F(A)$-invariant whenever $B$ contains $A$. 

To prove that the (transitive) action of $F(X) \rtimes G$ on $F(X)$ is extensively amenable, by~\cite[Lemma 3.1]{JM} we have to prove that there is a mean on $(\Z/2\Z)^{(F(X))}$ that is invariant under the action of $(\Z/2\Z)^{(F(X))}\rtimes(F(X) \rtimes G) $. For every $A \in \mathcal P_f(X)$, the map $F(\subset) \colon F(A) \to F(X)$ induces a group homomorphism $(\Z/2\Z)^{F(A)} \to (Z/2\Z)^{(F(X))}$ obtained by sending $\delta_s$ to $\delta_{F(\subset)(x)}$. If $m_A$ is the uniform probability measure on the (finite) image of this group homomorphism, then the same argument  as above shows the mean obtained on $(\Z/2\Z)^{(F(X))}$ by integrating $m_A$ over $m$ is invariant under $(\Z/2\Z)^{(F(X))}\rtimes (F(X) \rtimes G)$.
\end{proof}
We now proceed to the 
\begin{proof}[Proof of Corollary~\ref{prop:functor:amenable:intro}] To prove that $H$ is amenable we find an amenable action of $H$ with amenable stabilizers. Let $e$ be the unit element of $F(X)$, and consider the action of $H$ on the $H$-orbit of $e$. By Theorem~\ref{prop:functor:intro}  and Lemma~\ref{lemma=JSbis}, this action is extensively amenable and in particular is amenable. Moreover the stabilizer of $e$ is $H\cap (\{1\} \times G)$ which by assumption is amenable. So is every other stabilizer, which is a conjugate of the stabilizer of $e$. This implies that $H$ is amenable.
\end{proof}
\begin{remark}
In the above proof we appealed to Theorem~\ref{prop:functor:intro} in full generality, however it is sufficient to apply Proposition~\ref{prop:functor:cata} if its assumptions are satisfied.
\end{remark}
\medskip
%

%
%

To prove the full statement of  Theorem~\ref{prop:functor:intro} it is convenient to pass to a slightly more general setting of functors from $\cati$ to the category $\catac$ of amenable actions. This generalization is needed in order to obtain that the action of $F(X)\ltimes G$ on $F(X)$ is not only amenable, but also extensively amenable.

The objects in $\catac$ are given by a group acting amenably on a set, and a morphism from $G \acts X$ to $H \acts Y$ is given by a map $X \to Y$ and a group homomorphism $G \to H$ that intertwines the two actions. If $F\colon \cati\to\catac$ is a functor, we denote by $F_\grp$ and $F_\set$ the associated functors from $\cati$ to the category of groups and sets respectively. That is, $F(X)$ is the action of $F_{\grp}(X)$ on $F_{\set}(X)$. Since $\catac$ has direct limits, any functor $F\colon\cati\to\catac$ extends to the category of all sets with injective maps as morphisms, in the same way as for functors to $\cata$. We still denote the resulting functor by $F$.

As before, notice that any group action $G\acts X$ yields an action $G\acts F_\grp(X)$ by automorphisms and an action $G \acts F_\set(X)$, hence an action $F_\grp(X)\rtimes G \acts F_\set(X)$.

\begin{example}\label{ex:groups} Any functor $F \colon \cati \to \cata$ is in particular a functor $\cati \to \catac$ ($G \mapsto (G \acts G)$ realizes $\cata$ as a subcategory of $\catac$).
\end{example}

\begin{example}\label{ex:composition} Take two functors $E,F\colon \cati \to \catac$. Assume furthermore that $F_\set$ is a functor from $\cati$ to the category of sets with injective maps as morphisms, and $F_\grp$ is a functor $\cati \to \cata$. For a finite set $Y$, we have an action of an amenable group $F_\grp(Y)$ on $F_\set(Y)$, and hence by the functoriality of $E$ an action of $E_\grp(F_\set(Y)) \rtimes F_\grp(Y)$ on $E_\set(F_\set(Y))$. This action is amenable because $E_\grp(F_\set(Y)) \acts E_\set(F_\set(Y))$ is amenable and the group $F_\grp(Y)$ is amenable. This defines a new functor, which we denote $E \diamond F\colon \cati \to \catac$.
\end{example}

\begin{example}\label{ex:composition2} The previous construction also makes sense if $F\colon \cati \to \catac$ is a functor such that $F_{\grp}\colon \cati \to \cata$ and $E\colon \catset \to \catac$.
\end{example}
\medskip

\begin{defin}
A functor $F\colon\cati\to\catac$ is called \emph{tight} on a (possibly infinite) set $X$ if for all $x\in X$, no invariant mean for $F(X)$ gives full weight to the image of $F_\set(X\setminus \{x\}) \to F_\set(X)$.
\end{defin}
This definition extends Definition~\ref{defin:tight} to functors $F\colon \cati \to \catac$, because if $H$ is a subgroup of an amenable group $G$ then any $G$-invariant mean on $G$ gives weight $|G\colon H|^{-1}$ to $H$, where $|G\colon H|$ is the index of $H$ in $G$.


\medskip
We now define the \emph{support} maps associated to functors $F\colon\cati\to\cata$ and $F\colon\cati\to\catac$. Consider first the case of a functor $F\colon\cati\to\cata$. Given an arbitrary set $X$, define
$$\supp_X\colon F(X) \longrightarrow \Pf(X)$$
as follows: for $a\in F(X)$, the set $\supp_X(a)$ is the intersection of all $Y\in\Pf(X)$ such that $a$ is in the image of the morphism $F(Y)\to F(X)$. 

One checks that for any injective map $i\colon X\to Z$ one has
$$\supp_Z( F(i)(a))\ \subseteq \ i \,\supp_X(a).$$
Therefore, given any group action $G\acts X$, the map $\supp_X$ is equivariant with respect to the induced action on $F(X)$.
\begin{remarks}
The above inclusion can be strict. This happens for instance for the functor $F_0$ of Example~\ref{ex:trivial} if $A$ is non-trivial, $|X|=1$ and $|Z|>1$. Indeed any non-trivial $a\in F(X)\cong A$ satisfies $\supp_X(a)=X$ and $\supp_Z(F(i)(a)) = \varnothing$.

We also observe that a non-trivial element can have empty support even if $F$ is tight. For instance, let $F$ be a tight functor and take the direct product $F'$ of $F$ with a constant functor associated to a non-trivial group $A$. Then $F'$ is still tight but any element coming from $A$ has empty support.
\end{remarks}
In the case of a functor to $\catac$ the support map is defined in the same way, by replacing $F$ by $F_\set$, thereby yielding a $G$-equivariant map
$$\supp_X\colon F_\set(X) \longrightarrow \Pf(X).$$
We can now state and prove the results extending Theorem~\ref{prop:functor:intro}.
\begin{thm}\label{prop:functor:modern}
Let $F\colon\cati\to\catac$ be any functor and let $G$ be a group acting on a set $X$.

If the action of $G$ on $X$ is extensively amenable, then the action of $F_\grp(X)\rtimes G$ on $F_\set(X)$ is amenable. Moreover it is extensively amenable if $F_\grp$ takes values in $\cata$.

Conversely assume that the action of $F_\grp(X)\rtimes G$ on $F_\set(X)$ is amenable and that $F$ is tight on $X$. Then the action of $G$ on $X$ is extensively amenable.
\end{thm}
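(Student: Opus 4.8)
### Proof plan for Theorem~\ref{prop:functor:modern}

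The plan is to prove the two directions separately, reducing to the case where $G$ is finitely generated and acts transitively on $X$ via Lemma~\ref{lemma=JSbis} whenever that is convenient.

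\textbf{Forward direction.} Suppose $G\acts X$ is extensively amenable and fix a $G$-invariant mean $m$ on $\Pf(X)$ giving full weight to the collection of finite sets containing any given finite set. For each finite $A\subset X$ the action $F_\grp(A)\acts F_\set(A)$ is amenable (it is an object of $\catac$), so we may pick an $F_\grp(A)$-invariant mean $\mu_A$ on $F_\set(A)$; pushing it forward along the map $F_\set(A)\to F_\set(X)$ gives a mean $\mu_A^X$ on $F_\set(X)$ that is invariant under the image of $F_\grp(A)$. The key equivariance to record is $g\cdot\mu_A^X=\mu_{gA}^X$ for $g\in G$, which follows from functoriality together with the naturality of the support maps; to make this literally true one should first average $\mu_A$ over the (amenable, hence mean-averageable) group $F_\grp(A)$ so that the choice is canonical, or alternatively choose the $\mu_A$ only on a $G$-transversal and transport them as in the proof of Proposition~\ref{P: extension}. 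Then define $\widetilde m(f)=\int_{\Pf(X)}\mu_A^X(f)\,dm(A)$ for $f\in\ell^\infty(F_\set(X))$. This is $G$-invariant by construction of $m$ and the compatibility above. It is also $F_\grp(X)$-invariant: it suffices to check invariance under each $F_\grp(B)$, $B$ finite, and since $m$ gives full weight to $\{A : A\supseteq B\}$ and $\mu_A^X$ is $F_\grp(B)$-invariant as soon as $A\supseteq B$ (because the image of $F_\grp(B)$ lands in the image of $F_\grp(A)$), the integral is unchanged. Hence $F_\grp(X)\rtimes G\acts F_\set(X)$ is amenable. For the ``moreover'' clause, when $F_\grp$ takes values in $\cata$ one upgrades to extensive amenability exactly as in the last paragraph of the proof of Proposition~\ref{prop:functor:cata}: apply \cite[Lemma 3.1]{JM}, replace $F_\set(X)$ by $(\Z/2\Z)^{(F_\set(X))}$, note that each $F_\set(A)\to F_\set(X)$ induces $(\Z/2\Z)^{F_\set(A)}\to(\Z/2\Z)^{(F_\set(X))}$, and use that $(\Z/2\Z)^{F_\set(A)}\rtimes F_\grp(A)$ is an amenable group (here amenability of $F_\grp(A)$ is used) so that a canonical invariant mean on its image exists; integrating against $m$ produces the required mean.

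\textbf{Converse direction.} Now assume $F_\grp(X)\rtimes G\acts F_\set(X)$ is amenable and $F$ is tight on $X$; we want extensive amenability of $G\acts X$. By Lemma~\ref{lemma=JSbis} it is enough to produce, for each $x_0\in X$, a $G$-invariant mean on $\Pf(X)$ giving nonzero weight to $\{A:x_0\in A\}$ (the reduction to finitely generated transitive $G$ is optional). Take an $F_\grp(X)\rtimes G$-invariant mean $\nu$ on $F_\set(X)$ and push it forward along the $G$-equivariant support map $\supp_X\colon F_\set(X)\to\Pf(X)$ to obtain a $G$-invariant mean $m:=(\supp_X)_*\nu$ on $\Pf(X)$. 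It remains to see that $m$ gives nonzero weight to $\{A\in\Pf(X): x_0\in A\}$, i.e. that $\nu$ does not concentrate on $\{a\in F_\set(X): x_0\notin\supp_X(a)\}$. But an element $a$ with $x_0\notin\supp_X(a)$ lies in the image of $F_\set(X\setminus\{x_0\})\to F_\set(X)$ by definition of the support (the support is the intersection of all finite $Y$ with $a$ in the image of $F(Y)$, and each such $Y$ avoids $x_0$, so $a\in\operatorname{im}F(Y)\subseteq\operatorname{im}F_\set(X\setminus\{x_0\})$). Since $\nu$ is in particular an invariant mean for the action $F(X)$ in the sense required by tightness, tightness of $F$ on $X$ at the point $x_0$ says precisely that $\nu$ cannot give full weight to $\operatorname{im}(F_\set(X\setminus\{x_0\})\to F_\set(X))$; equivalently $\nu(\{a:x_0\in\supp_X(a)\})>0$, hence $m(\{A:x_0\in A\})>0$. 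This holds for every $x_0$, so by Lemma~\ref{lemma=JSbis}\eqref{item4} the action $G\acts X$ is extensively amenable.

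\textbf{Main obstacle.} The routine parts are the integrations and the invariance bookkeeping; the delicate point is ensuring the canonicity/equivariance $g\cdot\mu_A^X=\mu_{gA}^X$ in the forward direction, since an arbitrary choice of amenable mean $\mu_A$ on $F_\set(A)$ need not be compatible with the $G$-action. Averaging $\mu_A$ over the amenable group $F_\grp(A)$ (or choosing on a transversal and transporting, as in Proposition~\ref{P: extension}) resolves this, and one must check that this averaged mean is still $F_\grp(A)$-invariant and behaves functorially under injections $A\hookrightarrow A'$ and under the $G$-action. A secondary subtlety is getting the extensive (not just plain) amenability in the ``moreover'' clause, where the hypothesis $F_\grp$-values in $\cata$ is genuinely used to make $(\Z/2\Z)^{F_\set(A)}\rtimes F_\grp(A)$ amenable; this mirrors the corresponding step in Proposition~\ref{prop:functor:cata} and should go through without new ideas.
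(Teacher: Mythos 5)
Your overall architecture coincides with the paper's: for the forward direction, integrate pushed-forward invariant means on the $F_\set(A)$ against the mean witnessing extensive amenability of $G\acts X$; for the converse, push forward along $\supp_X$ and invoke tightness; for the ``moreover'' clause, bootstrap through the lamp functor $(\Z/2\Z)^{(\,\cdot\,)}$. The converse direction as you write it is essentially the paper's argument and is complete, modulo the small slip that $x_0\notin\supp_X(a)$ means \emph{some} (not every) finite $Y$ with $a\in\mathrm{Im}(F_\set(Y)\to F_\set(X))$ avoids $x_0$; the conclusion $a\in\mathrm{Im}(F_\set(X\setminus\{x_0\})\to F_\set(X))$ is unaffected.

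The one step that does not close as written is the equivariance $g\cdot\mu_A^X=\mu_{gA}^X$, which you rightly single out as the main obstacle but whose two proposed fixes are respectively vacuous and incomplete. Averaging $\mu_A$ over $F_\grp(A)$ changes nothing ($\mu_A$ is already $F_\grp(A)$-invariant) and cannot make the choice canonical across different finite sets. Transporting from a $G$-transversal of $\Pf(X)$ is closer, but it requires $\mu_A$ to be invariant under the \emph{setwise} stabilizer of $A$ in $G$, which acts on $F_\set(A)$ through a finite permutation group of $A$; without that, the transported mean depends on which $g$ realizes $gA=A'$. The paper's resolution is to fix, for each $n$, a mean $m_n$ on $F_\set([n])$ invariant under the whole of $F_\grp([n])\rtimes\Sym([n])$ (this action is amenable because $F_\grp([n])\acts F_\set([n])$ is and $\Sym([n])$ is finite, so such a mean exists), and to define $m_A$ by transporting $m_n$ through an arbitrary bijection $[n]\to A$: the $\Sym([n])$-invariance makes $m_A$ independent of the bijection and yields $g\,m_A^X=m_{gA}^X$ for free. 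Two further small corrections in the ``moreover'' clause: you must use the restricted power $(\Z/2\Z)^{(F_\set(A))}$ rather than $(\Z/2\Z)^{F_\set(A)}$, since $F_\set(A)$ may be infinite even for finite $A$ (this is why the paper packages the step as the composed functor $F_1\diamond F$ with $F_1=(\Z/2\Z)^{(\,\cdot\,)}$ and applies the already-proved amenability part to it); and since $F_\grp(X)\rtimes G$ need not act transitively on $F_\set(X)$, one should conclude via the converse part of the theorem applied to the tight functor $F_1$ rather than by a direct appeal to~\cite[Lemma 3.1]{JM}.
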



\begin{proof}
For each integer $n\geq 0$ we write $[n]= \{j\in \N: 1\leq j \leq n\}$ and choose a mean $m_n$ on $F_\set([n])$ invariant under the (amenable) action of $F_\grp([n]) \rtimes \Sym([n])$. Given any finite set $A$, we obtain a mean $m_A$ on $F_\set(A)$ by transporting $m_n$ for $n=|A|$ through some bijection $[n]\to A$. This mean $m_A$ does not depend on the chosen bijection since $m_n$ is $\Sym([n])$-invariant, and any bijection between two finite sets $A,A'$ maps $m_A$ to $m_{A'}$. We denote by $m_A^X$ the mean on $F_\set(X)$ obtained by push-forward through $F_\set(A)\to F_\set(X)$; this mean is $F_\grp(A)$-invariant, and we have $gm_A^X=m_{gA}^X$ for every $g\in G$.

If $m$ is a $G$-invariant mean on $\Pf(X)$ giving full weight to the collection of subsets that contain any given finite subset, we get a mean $\widetilde m$ as in the proof of Proposition~\ref{prop:functor:cata} by setting for every $f\in \ell^\infty(F_\set(X))$
$$\widetilde m(f) = \int_{\Pf(X)} m_A^X(f) dm(A).$$
This mean is $G$-invariant and $F_\grp(X)$-invariant for the same reason as in the proof of Proposition~\ref{prop:functor:cata}. This proves that the action of $F_\grp(X)\rtimes G$ on $F_\set(X)$ is amenable. Before proving that it is extensively amenable if $F_\grp$ takes values in $\cata$, let us prove the converse part of the statement.

Assume that $F_\grp(X)\rtimes G\acts F_\set(X)$ is amenable and that $F$ is tight on $X$. Given a $G$-invariant mean $m$ on $F_\set(X)$, the $G$-map $\supp_X$  provides us by push-forward with a $G$-invariant mean $\overline m$ on $\Pf(X)$. Let $x_0\in X$. By definition, the value of $\overline m$ on the collection of finite sets containing $x_0$ is $m(B)$, where $B = \big\{b\in F_\set(X) : x_0 \in \supp_X(b) \big\}$. This can be re-written as
$$B= \Big\{b\in F_\set(X) : \forall\,A\in\Pf(X) \text{ with } x_0\notin A: b\notin \mathrm{Im}(F_\set(A)\to F_\set(X))\Big\}.$$
In plain words, $B$ is the complement in $F_\set(X)$ of the image of the direct limit of $F_\set(A)$ over all finite $A$ not containing $x_0$. It is thus the complement of the image of $F_\set(X\setminus\{x_0\})$. Since $F$ is tight, we deduce $m(B)>0$ if $m$ is $F_\grp(X)$-invariant. By Lemma~\ref{lemma=JSbis} we conclude that $G \acts X$ is extensively amenable.

It remains to be proven that the action $F_\grp(X)\rtimes G\acts F_\set(X)$ is extensively amenable whenever $F_\grp$ takes values in $\cata$. By the above it is enough to find a tight functor $F_1 \colon \cati \to \cata$ such that the action of $F_1( F_\set(X) ) \rtimes (F_\grp(X) \rtimes G)$ on $F_1(F_\set(X))$ is amenable. We consider the functor $F_1(X) = (\Z/2\Z)^{(X)}$ and we observe that we can see $F_1$ as a functor $\catset \to \cata$ (for a map $f\colon X \to Y$ between sets, the associated group homomorphism $(\Z/2\Z)^{(X)} \to (\Z/2\Z)^{(Y)}$ maps $\delta_x$ to $\delta_{f(x)}$). By the assumption that $F_{\grp}(Y)$ is amenable for all $Y$ we can consider the  functor $F_1 \diamond F$ as defined in Example~\ref{ex:composition2}. Since $F_1$ commutes with direct limits, we see that $F_1 \diamond F (X)$ is the action of $F_1( F_\set(X) ) \rtimes F_\grp(X)$ on $F_1(F_\set(X))$ not only for finite sets $X$, but also infinite sets. The first part of the statement applied to the functor $F_1 \diamond F$ implies that the action of $F_1( F_\set(X) ) \rtimes (F_\grp(X) \rtimes G)$ on $F_1(F_\set(X))$ is amenable and concludes the proof.
\end{proof}

\section{Probabilistic reformulation and recurrent actions} \label{S: inverted orbit}
\subsection{The inverted orbit}
We now give a more probabilistic reformulation of extensive amenability. 
To simplify the statements we make the assumption that $G=\langle S\rangle$ is finitely generated and acts transitively on $X$ (this is very inessential by Lemma~\ref{lemma=JSbis}). We fix a symmetric probability measure $\mu$ on $G$ with generating support and a base point $x_0 \in X$. We can consider the (left) random walk $(g_n)_{n \geq 0}$ on $G$ defined by $g_0=e$ and $g_{n} = h_n g_{n-1}$ for $n \geq 1$, where $(h_i)_{i \geq 1}$ are independent with law $\mu$.

The inverted orbit $\mathbf O_n$ is then the (random) subset of $X$
\begin{equation} \label{eq=inverted_orbit} \mathbf O_n = \{x_0, g_1^{-1}x_0\cdots, g_n^{-1}x_0\}.\end{equation}
If $G$ is not abelian,  the inverted orbit need not have the same distribution as the directed orbit of the random walk on $X$. The inverted orbit is a well-known object, central to the study of growth and random walks on permutational wreath products, see~\cite{Bartholdi-Erschler1,Bartholdi-Erschler:poissongrowth,Amir-Virag:speedexponent}.

It is sometimes convenient to consider the following variation of the inverted orbit
\begin{equation}\label{eq=inverted orbit'}\mathbf O_n'=\{ x_0, \:h_n x_0,\: h_nh_{n-1} x_0,\ldots,\:h_n\cdots h_1 x_0\}. \end{equation}
For a fixed $n$, $\mathbf O_n$ and $\mathbf O'_n$ have the same distribution, although the joint distributions of the processes $(\mathbf O_n)_n$ and $(\mathbf O'_n)_n$ differ.

The next proposition shows that proving extensive amenability of an action $G\acts X$ boils down to a fine understanding the asymptotic behavior of the distribution of $|\mathbf O_n|$. The third reformulation was suggested to us by Gidi Amir, to whom we are grateful for letting us include it here.

\begin{prop}\label{prop:inverted_orbit} Fix $G \acts X$, $x_0 \in X$ and $\mu$ as before. The following properties are all equivalent to extensive amenability. 
\begin{enumerate}[(i)]
\item\label{item:return_probability} $\lim_{n \to \infty}-\frac1n\log \mathbb E(2^{-|\mathbf O_n|}) = 0$.
\item\label{item:return_probabilitybis} for every $\varepsilon>0$ we have $\mathbb P(|\mathbf O_n|< \varepsilon n) > e^{-\varepsilon n}$ for infinitely many $n$'s.
\item  There exists  a sequence of events $A_n\in\sigma(g_1,\ldots, g_n)$ verifying $-\frac 1n \log \mathbb{P}(A_n)\to 0$, conditioned to which  $\frac 1n \mathbb E( |\mathbf O_n|\: :\: A_n)\to 0.$ 
\end{enumerate}
In particular these conditions do not depend on $\mu$ and $x_0$.
\end{prop}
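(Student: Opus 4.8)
The strategy is to connect extensive amenability of $G\acts X$ with the amenability of the permutational wreath product $W = (\Z/2\Z)^{(X)}\rtimes G$ acting on the lamp group $(\Z/2\Z)^{(X)}\cong\Pf(X)$, a criterion for which is available from the F\o lner/return-probability characterization of amenability of a group equipped with a random walk. Recall from the introduction (following \cite[3.1]{JM}) that $G\acts X$ is extensively amenable if and only if this affine action of $W$ is amenable. Fixing the measure $\mu$ on $G$ and letting $\nu$ be the measure on $W$ given by first flipping the lamp at $x_0$ and then applying a $\mu$-step (or a suitable symmetric variant), the key observation is that the lamp configuration reached after $n$ steps of the $\nu$-random walk is exactly (the indicator function of) a set closely related to the inverted orbit $\mathbf O_n$: after $n$ steps the $G$-component is $g_n$ and the lamps lit form a subset of $\{x_0, g_1^{-1}x_0,\dots,g_n^{-1}x_0\}=\mathbf O_n$ (with cancellations mod $2$), so in particular the walk returns to a neighborhood of the identity coset only when $\mathbf O_n$ is small. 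I would spell out this dictionary precisely, including the passage between $\mathbf O_n$ and $\mathbf O_n'$, noting they are equidistributed for fixed $n$.

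\textbf{Key steps.} First, I would record the standard fact that a transitive action $H\acts H/K$ is amenable if and only if the $\nu$-random walk on $H$ (for $\nu$ symmetric with generating support) has ``return probability to $K$'' decaying subexponentially, i.e.\ $-\frac1n\log\mathbb P(g_n\in K)\to 0$; equivalently, that there exist events of subexponentially small probability on which the walk lands in (a bounded neighborhood of) $K$. Applied to $W$ acting on $(\Z/2\Z)^{(X)}$ with stabilizer $G$, ``landing near the identity coset'' after $n$ steps means the lamp configuration has bounded size; since the lit lamps at time $n$ form a subset of $\mathbf O_n$ whose parity pattern we can control, this translates into the statement that $|\mathbf O_n|$ is small with not-too-small probability. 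Averaging over the randomness of the lamp flips (each site in $\mathbf O_n$ is lit independently with probability close to $1/2$ after enough revisits — or more simply, bounding $\mathbb P(\text{configuration} = \varnothing\mid \mathbf O_n)$ from below by $2^{-|\mathbf O_n|}$) yields the identity $\mathbb P(g_n \in G \text{ in }W) \asymp \mathbb E(2^{-|\mathbf O_n|})$ up to the usual subexponential slack, which gives the equivalence of extensive amenability with~\eqref{item:return_probability}. Then~\eqref{item:return_probability}$\Leftrightarrow$\eqref{item:return_probabilitybis} is an elementary real-analysis exercise (a sequence of $[0,1]$-valued random variables $Z_n=|\mathbf O_n|/n$ satisfies $-\frac1n\log\mathbb E(2^{-nZ_n})\to0$ iff for all $\varepsilon$, $\mathbb P(Z_n<\varepsilon)>e^{-\varepsilon n}$ infinitely often — one direction by Markov/Chebyshev on the event $\{Z_n\ge\varepsilon\}$, the other by splitting the expectation), and~\eqref{item:return_probabilitybis}$\Leftrightarrow$(iii) is a reformulation making the relevant events explicit (take $A_n=\{|\mathbf O_n|<\varepsilon_n n\}$ for a slowly decaying $\varepsilon_n$, via a diagonal argument). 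Finally, independence of $\mu$ and $x_0$ is automatic once the conditions are shown equivalent to extensive amenability, which by Definition~\ref{condi=fullweight} makes no reference to either.

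\textbf{Main obstacle.} The delicate point is the quantitative comparison between the return probability $\mathbb P(g_n\in G)$ of the wreath-product walk and the functional $\mathbb E(2^{-|\mathbf O_n|})$ of the inverted orbit: one must be careful that ``returning to the stabilizer'' in $W$ means the full lamp configuration is \emph{trivial}, not merely supported on a small set, and handle the fact that whether a given lamp is lit depends on how many times the walk's $G$-trajectory visits that point, not just on $\mathbf O_n$ as a set. The clean way around this is to choose the step distribution $\nu$ on $W$ to flip the lamp at the \emph{current} position with probability $1/2$ independently at each step (a lazy/randomized lamplighter step), so that conditionally on the $G$-trajectory the final configuration is a uniform random subset of the multiset of visited points, giving $\mathbb P(\text{config}=\varnothing\mid\text{trajectory}) = 2^{-|\{\text{distinct visited points}\}|}$; one then checks this set of distinct visited points is exactly $\mathbf O_n'$ (hence equidistributed with $\mathbf O_n$), and that passing from the ``directed'' orbit visited by $(g_k)$ to the inverted orbit is exactly the content of working with $\mathbf O_n'$ versus $\mathbf O_n$. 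The subexponential-slack bookkeeping (absorbing the laziness parameter and the difference between ``lands in $G$'' and ``lands in a bounded neighborhood of $G$'') is routine but must be stated carefully.
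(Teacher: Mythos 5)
Your proposal is correct and follows essentially the same route as the paper: reduce to amenability of the action $(\Z/2\Z)^{(X)}\rtimes G\acts(\Z/2\Z)^{(X)}$, apply Kesten's return-probability criterion with a switch-walk-switch (randomized lamp-flip) step measure so that the return probability equals exactly $\mathbb E(2^{-|\mathbf O_n'|})=\mathbb E(2^{-|\mathbf O_n|})$, and then handle (i)$\Leftrightarrow$(ii)$\Leftrightarrow$(iii) by Markov, a diagonal extraction, and a Jensen-type bound. Your ``main obstacle'' paragraph identifies and resolves the only delicate point (multiple visits versus the set $\mathbf O_n'$) in precisely the way the paper does.
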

In the last part $\mathbb E( |\mathbf O_n|\: :\: A_n)= \mathbb E( |\mathbf O_n|1_{A_n})/\mathbb{P}(A_n)$  denotes the expectation of $|\mathbf O_n|$ conditioned to the event $A_n$. 
\begin{proof} In the proof we will freely use~\cite{JM} that $G \acts X$ is extensively amenable if and only if $(\Z/2\Z)^{(X)} \rtimes G \acts (\Z/2\Z)^{(X)}$ is amenable (as follows from Theorem~\ref{prop:functor:modern}). Since we assume that $G$ is finitely generated and acts transitively on $X$, the wreath product $(\Z/2\Z)^{(X)}\rtimes G$ is finitely generated and acts transitively on $(\Z/2\Z)^{(X)}$. Thereby if we fix any non-degenerate symmetric, finitely supported probability measure $\nu$ on $(\Z/2\Z)^{(X)}\rtimes G$ we can consider the Schreier graph $\Gamma$ associated to the action $(\Z/2\Z)^{(X)}\rtimes G\acts(\Z/2\Z)^{(X)}$ with generating set $\supp ( \nu)$ and base-point the trivial configuration $f_0=0_{(\Z/2\Z)^{(X)}}$. The left random walk on $(\Z/2\Z)^{(X)}\rtimes G$ with step measure $\nu$ then induces a nearest neighbour random walk $(f_n)$ on $\Gamma$. By Kesten's amenability criterion for a graph (see~\cite[Theorem 10.6]{Woess}), amenability of the action $(\Z/2\Z)^{(X)}\rtimes G\acts (\Z/2\Z)^{(X)}$ is thereby equivalent to 
\begin{equation}\label{E: return prob gamma}
\lim_{n\to \infty}-\frac 1n \log \mathbb P(f_n=f_0)=0.\end{equation}
Moreover it is sufficient to show this for any choice of $\nu$ with the above properties. We can thereby choose $\nu$ to be the \emph{switch-walk-switch} measure $\widetilde \mu$ on $(\Z/2\Z)^{(X)}\rtimes G$ associated to the measure $\mu$ on $G$ and to the base-point $x_0\in X$. Recall that if $\lambda$ is the uniform probability measure on $\{0_{(\Z/2\Z)^{(X)}},\delta_{x_0}\}\subset (\Z/2\Z)^{(X)}$ then by definition $\widetilde \mu= \lambda \ast \mu \ast \lambda$, where  $\lambda$ and $\mu$ are naturally seen as probability measures on $(\Z/2\Z)^{(X)}\rtimes G$. With this choice of $\nu$ it is then easy to see that $f_{n+1}$ is obtained from $f_n$ through the following steps: first change the value of $f_n(x_0)$ to a uniform random value in $\Z/2\Z$ to obtain a new $f'_n$, then translate $f'_n$ to $f''_n=h_{n+1}\cdot f'_n$ where $h_{n+1}\in G$ has distribution $\mu$, finally randomize again $f''_n(x_0)$ to obtain $f_{n+1}$. It follows from this description that $\supp f_n\subset \mathbf O'_n$ for every $n$, where $\mathbf O'_n$ is as in \eqref{eq=inverted orbit'}. Moreover $f_n$ lights each point in $\mathbf O'_n$ independently with probability 1/2.  From this we immediately get that $\mathbb P(f_n=f_0) = \mathbb E(2^{-|\mathbf O'_n|})=\mathbb E(2^{-|\mathbf O_n|})$. The equivalence between \eqref{item:return_probability} and extensive amenability follows.

The equivalence between \eqref{item:return_probability} and \eqref{item:return_probabilitybis} is essentially Markov's inequality.

The second condition implies the third by a diagonal extraction argument.

To see that the first condition implies the first, observe that $2^{-|\mathbf O_n|}\geq 2^{-|\mathbf O_n|}1_{A_n}$ and thereby by Jensen's inequality
\[-\frac 1n \log \mathbb E(2^{-|\mathbf O_n|}
)\leq -\frac 1n\log\mathbb E (2^{-|\mathbf O_n|}1_{A_n})\leq  -\frac 1n \log \mathbb{P}(A_n)-\frac1n \mathbb E( |\mathbf O_n|\: :\: A_n)\log 2\to 0.\qedhere\]
\end{proof}
\subsection{Application to recurrent actions} \label{S: recurrent}

As an application let us give a new proof of a criterion for extensive amenability in~\cite{JNS}. Here a group action $G\acts X$ is called \emph{recurrent} if for every symmetric, finitely supported probability measure $\mu$ on $G$ and every $x_0\in X$, the random walk $(g_nx_0)$ on the orbit of $x_0$ induced by the left random walk on $G$ is recurrent. If $G$ is finitely generated, it is sufficient to check this for a symmetric, finitely supported probability measure with generating support. Equivalently it is sufficient to check that for every $x\in X$ and for a symmetric, finite generating set $S$ of $G$ the Schreier graph $\Gamma(G, x_0, S)$ is recurrent for simple random walk.

\begin{thm}[Theorem 1.2 in~\cite{JNS}]\label{T: recurrent}
Recurrent actions are extensively amenable.
\end{thm}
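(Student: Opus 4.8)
The strategy is to verify condition~\eqref{item:return_probabilitybis} (or equivalently~\eqref{item:return_probability}) of Proposition~\ref{prop:inverted_orbit} using the recurrence hypothesis. By Lemma~\ref{lemma=JSbis} we may assume $G = \langle S \rangle$ is finitely generated and acts transitively on $X$; fix a symmetric finitely supported generating measure $\mu$ and a base point $x_0 \in X$. The key observation is that the size of the inverted orbit $\mathbf O_n = \{x_0, g_1^{-1}x_0, \ldots, g_n^{-1}x_0\}$ can be controlled by the number of distinct points visited by a walk on $X$, and recurrence forces this number to have sublinear growth along a subsequence with probability bounded below exponentially.

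More precisely, I would work with the variant $\mathbf O'_n = \{x_0, h_n x_0, h_n h_{n-1} x_0, \ldots, h_n \cdots h_1 x_0\}$ from~\eqref{eq=inverted orbit'}, which has the same distribution as $\mathbf O_n$ for each fixed $n$. Reading the increments in the natural order, the points $x_0,\ h_1 x_0,\ h_2 h_1 x_0, \ldots$ form exactly the trajectory of the random walk on $X$ induced by $\mu$ (up to reversing the word, which does not change the set of points visited). Thus $|\mathbf O'_n|$ is the cardinality of the range of the induced random walk on the orbit $X = G x_0$ at time $n$. Since the action is recurrent, this walk returns to $x_0$ infinitely often almost surely; a standard argument (subadditivity of the range, or direct estimation via return times) then gives $\frac1n \mathbb E|\mathbf O'_n| \to 0$, i.e.\ the range grows sublinearly in expectation. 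Concretely, one writes the range up to time $n$ as a sum of indicators of fresh visits and uses that, by recurrence, the probability of never returning to the starting point of each excursion tends to $0$ in Cesàro average.

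From sublinear expected range one extracts the probabilistic lower bound needed for~\eqref{item:return_probabilitybis}. Given $\varepsilon > 0$, sublinearity of $\frac1n\mathbb E|\mathbf O'_n|$ yields $\mathbb P(|\mathbf O'_n| < \varepsilon n) \to 1$ along a subsequence (in fact for all large $n$) by Markov's inequality, which is far stronger than the required $\mathbb P(|\mathbf O_n| < \varepsilon n) > e^{-\varepsilon n}$ for infinitely many $n$. Since $\mathbf O_n$ and $\mathbf O'_n$ are equidistributed, this establishes~\eqref{item:return_probabilitybis}, and Proposition~\ref{prop:inverted_orbit} concludes that $G \acts X$ is extensively amenable.

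\textbf{Main obstacle.} The substantive point is the passage from recurrence of the induced walk on $X$ to sublinearity of the expected range of $\mathbf O'_n$. Recurrence is an almost-sure statement about returning to the base point, whereas we need a quantitative $L^1$ control of the number of distinct sites. The cleanest route is the classical identity expressing the expected range in terms of return probabilities: writing $R_n = |\mathbf O'_n|$, one has $\mathbb E[R_n] = \sum_{k=0}^{n} \mathbb P(y_k \notin \{y_0,\ldots,y_{k-1}\})$ for the trajectory $(y_k)$, and by reversibility (using the symmetry of $\mu$) each summand is comparable to the probability that the walk started at $y_k$ avoids its starting point for $k$ steps, which tends to $0$ precisely because the walk is recurrent. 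One must be slightly careful that the induced walk on a Schreier graph need not be a simple random walk and the graph need not be transitive, but symmetry of $\mu$ gives reversibility with respect to the counting measure, which is all that is needed; recurrence of a reversible chain implies the escape probabilities vanish, hence $\frac1n\mathbb E[R_n] \to 0$ by Cesàro.
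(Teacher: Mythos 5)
Your overall strategy is the paper's: reduce to the finitely generated transitive case, show that recurrence forces $\frac1n\mathbb E|\mathbf O_n|\to0$, and feed this into Proposition~\ref{prop:inverted_orbit}. But the central identification you make is false: $\mathbf O'_n$ is \emph{not} the range of the induced random walk on $X$. Reversing the word $(h_1,\dots,h_n)$ turns $\mathbf O'_n$ into a set equidistributed with $\{x_0,\,h_1x_0,\,h_1h_2x_0,\dots,h_1\cdots h_nx_0\}$, i.e.\ the trajectory of the \emph{right} random walk $h_1\cdots h_k$ applied to $x_0$, where each new increment acts on the inside; the induced Markov chain on $X$ is $y_k=h_k\cdots h_1x_0$, with increments acting on the outside. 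For nonabelian $G$ these sets genuinely differ -- the paper warns of exactly this right after~\eqref{eq=inverted_orbit} (``the inverted orbit need not have the same distribution as the directed orbit of the random walk on $X$''), and this distinction is precisely why the inverted orbit is a nontrivial object in the wreath-product literature. Consequently your ``main obstacle'' paragraph, which bounds the expected range of the trajectory $(y_k)$ by fresh-visit indicators and reversibility, controls the wrong random variable.

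The fact you need is true -- it is Lemma~\ref{L: recurrence inverted orbit} -- but its proof runs differently: one computes $\mathbb E|\mathbf O_{n+1}|-\mathbb E|\mathbf O_n|=\mathbb P(g_{n+1}^{-1}x_0\notin\mathbf O_n)$, observes that the defining conditions telescope to $h_{k+1}^{-1}\cdots h_{n+1}^{-1}x_0\neq x_0$ for $k=0,\dots,n$, and uses the symmetry of $\mu$ to identify this tuple in law with $(g_1,\dots,g_{n+1})$, so the increment equals $\mathbb P(T>n+1)$ for the first return time $T$ of the induced walk to $x_0$; Ces\`aro summation then gives $\frac1n\mathbb E|\mathbf O_n|\to\mathbb P(T=\infty)=0$ by recurrence. (That this limit coincides with the Kesten--Spitzer--Whitman limit for the range of the forward walk is a true but nonobvious coincidence; it must be proved, not assumed.) Once this lemma is in hand, your concluding step via Markov's inequality and condition~\eqref{item:return_probabilitybis} is fine; the paper instead applies Jensen's inequality to verify condition~\eqref{item:return_probability} directly. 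Either ending works -- the gap is entirely in the claimed identity $|\mathbf O'_n|=R_n$.
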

Let us give a direct proof based on the inverted orbit. The main ingredient is
\begin{lemma}[\cite{Bartholdi-Erschler:poissongrowth},\cite{Amir-Virag:speedexponent}]\label{L: recurrence inverted orbit}
Assume that $G$ is finitely generated and the action $G\acts X$ is transitive. Then $G\acts X$ is recurrent if and only if $\frac 1n \mathbb E|\mathbf O_n|\to 0$ for some (equivalently for any) non-degenerate symmetric, finitely supported probability measure $\mu$ on $G$.

\end{lemma}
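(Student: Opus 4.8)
The plan is to prove Lemma~\ref{L: recurrence inverted orbit} by relating the inverted orbit to the range of the random walk on $X$. First I would fix a non-degenerate symmetric finitely supported $\mu$ on $G$ and the base-point $x_0$, and recall the variant $\mathbf O_n'=\{x_0,h_nx_0,h_nh_{n-1}x_0,\dots,h_n\cdots h_1x_0\}$, which has the same distribution as $\mathbf O_n$ for each fixed $n$. The key observation is that, reading the increments in reverse order, the sequence $(h_n x_0,\, h_nh_{n-1}x_0,\dots)$ is (after a time-reversal and using symmetry of $\mu$) distributed like the trajectory of the $\mu$-random walk on $X$ started at $x_0$; hence $|\mathbf O_n'|$ has the same law as the cardinality of the range $R_n=\#\{x_0, Z_1x_0,\dots,Z_nx_0\}$ of the induced walk $(Z_kx_0)$ on the orbit. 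So $\frac1n\mathbb E|\mathbf O_n|=\frac1n\mathbb E R_n$, and the statement reduces to the classical fact that a transitive graph (here the Schreier graph $\Gamma(G,x_0,S)$, which is the relevant walk since the action is transitive) is recurrent for simple random walk if and only if $\frac1n\mathbb E R_n\to 0$.

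Next I would supply that classical equivalence, at least in a form adapted to the possibly non-reversible but symmetric setting here. In one direction, if the walk is recurrent then the number of \emph{new} points visited by time $n$ grows sublinearly: writing $R_n-R_{n-1}=\mathbf 1[\text{$Z_nx_0$ not visited before}]$ and using that the probability of never returning to a given state is $0$ by recurrence, a Cesàro/bounded-convergence argument gives $\frac1n\mathbb E R_n\to 0$. More precisely, $\mathbb P(Z_nx_0\notin\{x_0,\dots,Z_{n-1}x_0\})=\mathbb P(\text{no return to }Z_nx_0\text{ in the previous $n$ steps})$, which by stationarity-type reasoning and reversibility of the Schreier graph tends to the escape probability, namely $0$; averaging yields the claim. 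In the other direction, if the walk is transient then the escape probability $p$ is positive, and $\mathbb E R_n\geq \sum_{k\leq n}\mathbb P(Z_jx_0\neq Z_kx_0\ \forall j>k)\sim pn$, so $\frac1n\mathbb E R_n\not\to 0$. I would cite the references already in the paper (Bartholdi--Erschler, Amir--Vir\'ag) for the precise statement rather than redo it in full.

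The one genuine subtlety — and the step I expect to be the main obstacle — is the passage from the inverted orbit $\mathbf O_n$ to the range of an honest random walk on $X$. Because $G$ need not be abelian, $\mathbf O_n$ is \emph{not} the range of the forward walk $(g_nx_0)$; the identification only works through $\mathbf O_n'$ and a time-reversal, and it relies on $\mu$ being symmetric so that $(h_n,\dots,h_1)$ and $(h_1,\dots,h_n)$ have the same law. One must check that the reversed increment process, projected to $X$ via the orbit map, really is the $\mu$-walk on the Schreier graph $\Gamma(G,x_0,S)$ and that this graph is the reversible one to which the classical range dichotomy applies. Once this bookkeeping is done carefully, the equality in distribution $|\mathbf O_n|\stackrel{d}{=}R_n$ holds for each fixed $n$, which is all that is needed since the statement only concerns $\mathbb E|\mathbf O_n|$ for fixed $n$; the differing joint laws of the processes are irrelevant here. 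The independence of the conclusion from the choice of $\mu$ then follows from the corresponding well-known independence for recurrence of the Schreier graph (equivalently, from the "some/any" clause built into the definition of recurrent action).
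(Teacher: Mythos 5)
Your reduction hinges on the claim that $|\mathbf O_n|$ has the same law as the range $R_n$ of the induced $\mu$-walk on the Schreier graph, and this is exactly where the argument breaks. After reversing the increments, $\mathbf O_n$ is equidistributed with $\{x_0,\,h_1x_0,\,h_1h_2x_0,\dots,h_1\cdots h_nx_0\}$, i.e.\ with the projection to $X$ of the \emph{right} random walk on $G$. When the action is not free, this projected sequence is not a Markov chain on $X$ and is not the $\mu$-walk on the Schreier graph: the step from $h_1\cdots h_{k-1}x_0$ to $h_1\cdots h_kx_0$ depends on the whole group element $h_1\cdots h_{k-1}$, not only on the current point. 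Concretely, take $G=S_3$ acting on $X=\{1,2,3\}$, $x_0=1$, and $\mu$ uniform on $\{(12),(13)\}$. Checking the four equally likely pairs $(h_1,h_2)$ gives $\mathbb E|\mathbf O_2|=5/2$, while the range of the induced walk satisfies $R_2=2$ almost surely. So neither the distributions nor even the expectations agree, and the ``bookkeeping'' you defer cannot be carried out. (If instead you define $Z_k$ to be the right walk $h_1\cdots h_k$, the distributional identity becomes a tautology, but then $(Z_kx_0)$ is not the walk to which the classical range dichotomy applies. A further issue: that dichotomy, in the clean form $\mathbb E R_n=\sum_{k\le n}\mathbb P(T>k)$, is a statement about vertex-transitive walks, and the Schreier-graph chain is reversible but generally not vertex-transitive.)

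The correct use of symmetry and time-reversal is one level down, applied to the increments of $\mathbb E|\mathbf O_n|$ rather than to the set as a whole, and this is what the paper does. One writes $\mathbb E|\mathbf O_{n+1}|-\mathbb E|\mathbf O_n|=\mathbb P\bigl(g_{n+1}^{-1}x_0\notin\mathbf O_n\bigr)$, and this event is that $h_{j}^{-1}h_{j+1}^{-1}\cdots h_{n+1}^{-1}x_0\ne x_0$ for all $1\le j\le n+1$. Since the tuple $(h_{n+1}^{-1},\,h_n^{-1}h_{n+1}^{-1},\dots,h_1^{-1}\cdots h_{n+1}^{-1})$ has the same law as $(g_1,\dots,g_{n+1})$ by symmetry of $\mu$, this probability equals $\mathbb P(T>n+1)$, where $T$ is the first return time to $x_0$ of the honest induced walk $(g_nx_0)$. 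Ces\`aro summation then gives $\frac1n\mathbb E|\mathbf O_n|\to\mathbb P(T=\infty)$, which vanishes if and only if the action is recurrent. In other words, it is the expected inverted orbit, not the range of the Schreier walk, that always obeys the formula $\sum_{k\le n}\mathbb P(T>k)$; your proof identifies the two, and they genuinely differ.
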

Although this is exactly~\cite[Lemma 3.1]{Bartholdi-Erschler:poissongrowth} we report a proof for the convenience of the reader.

\begin{proof}

Let $T=\min\{n\geq 1\: :\: {g_n}x_0=x_0\}\in\N\cup\{\infty\}$. We have
\begin{align*}\mathbb E |\mathbf O_{n+1}|-\mathbb E|\mathbf O_n|= \mathbb P(g_{n+1}^{-1}x_0\notin \mathbf O_n)=\\
\mathbb P(h_{n+1}^{-1}x_0\neq x_0,\: h_n^{-1}h_{n+1}^{-1}x_0\neq x_0,\ldots,\: h_1^{-1}\cdots h_{n+1}^{-1}x_0\neq x_0)=\mathbb P(T>n+1),\end{align*}
since by symmetry $(h_{n+1}^{-1},\:h_n^{-1}h_{n+1}^{-1},\ldots,\: h_1^{-1}\cdots h_{n+1}^{-1})$ has the same law as $(g_1, \ldots, g_{n+1})$. This computation shows that $\frac1n \mathbb E|\mathbf O_n|\to \mathbb P(T=\infty)$, which vanishes if and only if $G\acts X$ in recurrent.\qedhere
\end{proof}
\begin{proof}[Proof of Theorem~\ref{T: recurrent}]
By Lemma~\ref{lemma=JSbis} we may assume that $G$ is finitely generated and acts transitively on $X$. Apply part 1 of Proposition~\ref{prop:inverted_orbit}. By convexity 
\[-\frac 1n\log \mathbb E(2^{-|\mathbf O_n |})\leq\frac 1n \mathbb E ({|\mathbf O_n|}) \log 2\to 0,\]
where we used Lemma~\ref{L: recurrence inverted orbit}. Part~\eqref{item:return_probability} of Proposition~\ref{prop:inverted_orbit} gives the conclusion. 
\end{proof}

\section{Interval exchange transformations}\label{section:iet}

\subsection{Amenability of subgroups of low rational rank}
Let $\Lambda<\R/\Z$ be a finitely generated subgroup of the circle. Thus, $\Lambda$ is isomorphic to $\Z^{d} \times F$ for some finite abelian group $F$ and some integer $d$, the \emph{rational rank} of $\Lambda$, denoted $\rk(\Lambda)$. With this notation the rational rank (as defined in Definition \ref{def=rational_rank_iet}) of a finitely generated subgroup $G \le \iet$ is an abbreviation for $\rk(\Lambda(G))$. We denote $\iet(\Lambda)$ the subgroup of all $g\in \iet$ so that the angle $gx-x$ is in $\Lambda$ for every $x\in \R/\Z$.

The following result is a reformulation of Theorem~\ref{T:rank2}.
 
 \begin{thm}\label{T:rank2full}
 Let $\Lambda<\R/\Z$ be finitely generated. If $\rk(\Lambda)\leq 2$, then $\iet(\Lambda)$ is amenable.
\end{thm}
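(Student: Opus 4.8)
The plan is to combine the twisted embedding $\iet(\Lambda)\hookrightarrow \sym(\R/\Z)\rtimes \iet(\Lambda)$ with the recurrence of the natural action, and then invoke Corollary~\ref{prop:functor:amenable:intro} (via Proposition~\ref{prop:functor:cata}, since $\sym$ takes finite sets to finite groups). First I would make the orbit structure concrete: since $\Lambda$ is finitely generated and every $g\in\iet(\Lambda)$ moves points by elements of $\Lambda$, each $\iet(\Lambda)$-orbit in $\R/\Z$ is contained in a single coset $x+\Lambda$; replacing $\R/\Z$ by one such orbit $X$ loses nothing by Lemma~\ref{lemma=JSbis}. Writing $\Lambda\cong \Z^d\times F$ with $d=\rk(\Lambda)\le 2$, the orbit $X$ embeds $\iet(\Lambda)$-almost-equivariantly into $\Z^d$ (up to the finite factor $F$), and one checks that each generator of $\iet(\Lambda)$ acts on this $\Z^d$-indexed picture by a bounded perturbation of a translation — indeed the relevant Schreier graph admits an injective Lipschitz embedding into $\Z^d$ as alluded to in the introduction. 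Hence for $d\le 2$ the Schreier graph has at most quadratic growth, so simple random walk on it is recurrent, i.e.\ the action $\iet(\Lambda)\acts X$ is recurrent in the sense of \S~\ref{S: recurrent}, and therefore extensively amenable by Theorem~\ref{T: recurrent}.

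Next I would construct the $\sym(\R/\Z)$-cocycle with amenable kernel on $\iet(\Lambda)$. The idea: to $g\in\iet(\Lambda)$ associate a finitely supported permutation $\tau_g$ of $\R/\Z$ recording the ``combinatorial rearrangement'' performed by $g$ on the finite set of its discontinuity points (breakpoints), more precisely defining $\tau_g$ on the finite set of endpoints of the intervals that $g$ permutes — extended by the identity elsewhere — so that $g\mapsto(\tau_g,g)$ is a homomorphism $\iet(\Lambda)\to\sym(\R/\Z)\rtimes\iet(\Lambda)$ into the semidirect product for the conjugation action. I would verify the cocycle identity $\tau_{gh}=\tau_g\cdot(g\cdot\tau_h)$ by tracking how breakpoints compose. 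The kernel $\{g:\tau_g=1\}$ should then consist of interval exchanges that permute no breakpoints nontrivially, i.e.\ translations; since every translation of $\R/\Z$ by an element of $\Lambda$ is of this form and $\Lambda$ is abelian, the kernel is contained in (a copy of) $\Lambda$, hence abelian, hence amenable.

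With these two ingredients in hand, Remark~\ref{R: method} applies directly: $g\mapsto(\tau_g,g)$ is a twisted embedding $\iet(\Lambda)\hookrightarrow \sym(X)\rtimes \iet(\Lambda)$ whose ``$c_g=1$'' locus is amenable, and $\iet(\Lambda)\acts X$ is extensively amenable, so Corollary~\ref{prop:functor:amenable:intro} yields amenability of $\iet(\Lambda)$. Since by the remark following Definition~\ref{def=rational_rank_iet} any $G\le\iet$ with $\rk(G)\le 2$ lies inside $\iet(\Lambda(G))$ with $\rk(\Lambda(G))\le 2$, amenability of $G$ follows from amenability of $\iet(\Lambda(G))$ — giving Theorem~\ref{T:rank2} from Theorem~\ref{T:rank2full}.

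I expect the main obstacle to be the geometric bookkeeping in the middle step: namely, pinning down precisely the embedding of the orbit and of the action into $\Z^d$ and controlling it well enough (a Lipschitz, injective, quasi-translation-equivariant embedding) that polynomial growth of degree $\le 2$, hence recurrence, genuinely follows — interval exchanges are only right-continuous and their breakpoints interact combinatorially, so one must be careful that the comparison with $\Z^d$ is uniform over generators. A secondary but real subtlety is checking that the cocycle $\tau$ is well-defined and genuinely a homomorphism into the semidirect product (the subtlety being which finite set of points one permutes and verifying that this choice is compatible with composition), and that its kernel is exactly the rotations.
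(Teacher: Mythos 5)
Your proposal follows the paper's proof essentially verbatim: recurrence of $\iet(\Lambda)\acts\R/\Z$ via an injective Lipschitz map of each orbital Schreier graph into $\Lambda\cong\Z^d\times F$ with $d\le 2$ (hence extensive amenability by Theorem~\ref{T: recurrent}), combined with the twisted embedding $g\mapsto(\tau_g,g)$ into $\sym(\R/\Z)\rtimes\iet$ whose kernel consists of rotations, and then Corollary~\ref{prop:functor:amenable:intro}. The one device you would still need to pin down is the paper's clean definition $\tau_g=\widetilde g\,g^{-1}$, where $\widetilde g$ is the left-continuous version of $g$; this makes the cocycle identity $\tau_{gh}=\tau_g({}^g\tau_h)$ and the identification of the kernel with the continuous elements (i.e.\ the rotations) immediate, resolving exactly the subtlety you flag.
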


Our goal is to apply Corollary~\ref{prop:functor:amenable:intro} for the functor $F\colon \Sym\colon \cati \to \cata$, given by $F(A)=\Sym(A)$ the symmetric group. This extends to an infinite set as $F(X)=\Sym(X)$, the group of permutations of $X$ with finite support (see Example~\ref{ex:sym}). If $G \acts X$ we recall that we have an action of $G$ on $\Sym(X)$ by conjugation. For $\tau \in \Sym(X)$ and $g \in G$ we denote ${}^g\tau = g \tau g^{-1}$. The first ingredient is (see 
\S~\ref{S: recurrent} for the definition of a recurrent action)
\begin{lemma} If $\rk(\Lambda)\leq 2$, the action of $\iet(\Lambda)$ on $\R/\Z$ is recurrent. In particular, it is extensively amenable.
\end{lemma}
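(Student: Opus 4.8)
The plan is to reduce to a finitely generated group, bound the volume growth of the relevant orbital Schreier graphs, deduce recurrence from the volume bound, and finally invoke Theorem~\ref{T: recurrent} for the "in particular" clause.

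\emph{Step 1: reduction to finitely generated subgroups.} The group $\iet(\Lambda)$ is typically not finitely generated, so I would not argue with it directly. By definition, recurrence of $\iet(\Lambda)\acts\R/\Z$ asks that for every finitely supported symmetric $\mu$ on $\iet(\Lambda)$ and every $x_0\in\R/\Z$, the $\mu$-random walk on the $\iet(\Lambda)$-orbit of $x_0$ be recurrent. But its trajectory stays inside $H\cdot x_0$, where $H=\langle\supp\mu\rangle$ is finitely generated, and $\Lambda(H)\le\Lambda$ forces $\rk(\Lambda(H))\le 2$. So it suffices to show: if $G\le\iet$ is finitely generated with $\rk(G)\le 2$, then for every finite symmetric generating set $S$ and every $x_0$, the orbital Schreier graph $\Gamma=\Gamma(G,x_0,S)$ is recurrent for simple random walk (if the orbit $Gx_0$ is finite there is nothing to do, so assume $\Gamma$ is infinite).

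\emph{Step 2: the orbit embeds Lipschitz-ly into a virtually $\Z^{\le2}$ group.} Write $\Lambda=\Lambda(G)\cong\Z^{d}\times F$ with $d=\rk(G)\le 2$ and $F$ finite. Since $gx_0-x_0\in\Lambda$ for every $g\in G$, the map $\psi\colon Gx_0\to\Lambda$, $\psi(y)=y-x_0$, is a well-defined injection. Fix a finite generating set of $\Lambda$ and set $M=\max_{s\in S}\max_{x\in\R/\Z}\|sx-x\|_{\Lambda}$, which is finite because each $s$ has finitely many angles. Then $\psi$ is $M$-Lipschitz from the Schreier metric to the word metric of $\Lambda$, because $\psi(sy)-\psi(y)=sy-y$ has $\Lambda$-length $\le M$. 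Consequently $\psi$ sends the Schreier ball $B_n(x_0)$ into the $\Lambda$-ball of radius $Mn$, which has at most $|F|(2Mn+1)^{d}$ points; as $\psi$ is injective and $d\le 2$, we get the quadratic volume bound $|B_n(x_0)|=O(n^{2})$.

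\emph{Step 3: quadratic growth implies recurrence, then conclude.} I would use the Nash--Williams criterion with cutsets $\Pi_k$ equal to the set of edges of $\Gamma$ joining $B_k(x_0)$ to its complement: these are pairwise disjoint (an edge of $\Pi_k$ joins the $k$-th sphere to the $(k{+}1)$-st), each separates $x_0$ from infinity, and $|\Pi_k|\le |S|\,|B_{k+1}\setminus B_k|$, so $\sum_{k\le N}|\Pi_k|\le |S|\,|B_{N+1}|=O(N^{2})$ by telescoping. A Cauchy--Schwarz estimate over the dyadic blocks $\{2^{j}{+}1,\dots,2^{j+1}\}$ then shows each block contributes at least a fixed positive amount to $\sum_k 1/|\Pi_k|$, so this series diverges, the effective resistance from $x_0$ to infinity is infinite, and $\Gamma$ is recurrent. (Alternatively one may quote a ready-made "volume growth $O(n^2)\Rightarrow$ recurrence" statement for bounded-degree graphs.) Since $x_0$ and $S$ were arbitrary, $G\acts\R/\Z$ is recurrent, hence so is $\iet(\Lambda)\acts\R/\Z$; Theorem~\ref{T: recurrent} then gives extensive amenability.

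\emph{Main obstacle.} Steps 1 and 2 are essentially bookkeeping once one notices that the orbit lies in a single coset of the virtually $\Z^{\le2}$ group of angles. The only point requiring care is Step 3: one must make sure the passage from volume growth to recurrence is applied with the correct hypotheses — in particular that a growth bound from a \emph{single} basepoint suffices and that no doubling property is needed — which is precisely what the Nash--Williams plus dyadic Cauchy--Schwarz argument delivers.
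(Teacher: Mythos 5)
Your proof is correct and follows the same skeleton as the paper's: reduce to a finitely generated subgroup, observe that $y\mapsto y-x_0$ is an injective Lipschitz map from the orbital Schreier graph into the Cayley graph of $\Lambda\cong\Z^{d}\times F$ with $d\le 2$, and conclude recurrence. The only divergence is the last step. The paper converts the injective Lipschitz map directly into recurrence by quoting the rough-embedding comparison theorem \cite[Theorem 2.17]{LyonsPeres} (transience is inherited under rough embeddings, so recurrence of $\Lambda$ pulls back to the Schreier graph), whereas you extract from the embedding only the cardinality estimate $|B_n(x_0)|=O(n^2)$ and then run Nash--Williams with the dyadic Cauchy--Schwarz trick. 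Your route is self-contained and uses strictly less information (single-basepoint volume growth rather than the full metric embedding), at the cost of a slightly longer argument; both are standard and both are valid here. Your attention to the hypotheses in Step 3 (bounded degree, no doubling needed, finite-orbit case set aside) is exactly the care the citation would otherwise absorb.
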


\begin{proof}

Equip $\Lambda$ with a finite symmetric generating set and the corresponding Cayley graph structure. Then $\Lambda$ is a recurrent graph. Let $H<\iet(\Lambda)$ be a finitely generated subgroup, equipped with a finite symmetric generating set $S$. For $x\in\R/\Z$ let $\Gamma(x, H, S)$ be the orbital Schreier graph for the action of $H$ on $H_x$. We have an injective map
 \begin{align*}   \Gamma(x, H, S)&\to \Lambda\\
  y&\mapsto y-x
\end{align*}
where the difference is taken in $\R/\Z$. This maps takes values in $\Lambda$ since $y=hx$ for some $h \in H<\iet(\Lambda)$. It is not hard to check that this map is Lipschitz (for some constant depending on the generating set $S$ of $H$ only). Since $\rk(\Lambda)=2$ this implies that the action is recurrent, see ~\cite[Theorem 2.17]{LyonsPeres}. The action is then extensively amenable by Theorem~\ref{T: recurrent}.
\end{proof}

The other ingredient in the proof of Theorem~\ref{T:rank2full} is the following result that concludes the proof.
\begin{prop}
A subgroup $G\le \iet$ is amenable if and only if the action $G\acts \R/\Z$ is extensively amenable.
\end{prop}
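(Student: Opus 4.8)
The plan is to apply Corollary~\ref{prop:functor:amenable:intro} with the symmetric-group functor $F = \Sym\colon\cati\to\cata$, acting on $X = \R/\Z$ via the natural permutation action of $\iet$, and $G = \iet$ itself. The ``if'' direction is immediate from the corollary once we produce a $\Sym(\R/\Z)$-cocycle $\tau\colon\iet\to\Sym(\R/\Z)$, $g\mapsto\tau_g$, with the property that the twisted set $\{g\in\iet : \tau_g = 1\}$ is an amenable subgroup: then for any $G\le\iet$ with $G\acts\R/\Z$ extensively amenable, the subgroup $H = \{(\tau_g,g) : g\in G\}$ of $\Sym(\R/\Z)\rtimes\iet$ has amenable intersection with $\{1\}\times\iet$, so $H\cong G$ is amenable. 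The ``only if'' direction is Corollary~\ref{C: hereditarily amenable} (or Lemma~\ref{lemma=intermediatepty}): extensive amenability of $G\acts\R/\Z$ implies $G\acts\R/\Z$ is amenable, and conversely if $G$ is amenable then every action of $G$, in particular $G\acts\R/\Z$, is extensively amenable by Lemma~\ref{lemma=intermediatepty}.

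So the real content is constructing the cocycle. The natural choice is the \emph{discontinuity-set} or \emph{germ-permutation} cocycle: for $g\in\iet$, the map $x\mapsto gx-x$ is a right-continuous step function taking finitely many values, so there is a finite set $D(g)\subset\R/\Z$ of points where $g$ fails to be a local rotation (the endpoints of the exchanged arcs). I would define $\tau_g$ to be a permutation of $\R/\Z$ supported on the finite set of relevant breakpoints --- concretely, one records for each $g$ the finitely many points $x$ where the ``left germ'' and ``right germ'' of $g$ disagree, together with the combinatorial data of how $g$ permutes the half-neighbourhoods at these points, and packages this as a finite-support permutation. The cocycle identity ${}^g\tau_h\cdot\tau_g = \tau_{gh}$ (equivalently $\tau_{gh} = {}^{g}\tau_h \,\tau_g$, matching the semidirect product convention $(\sigma,g)(\tau,h) = (\sigma\,{}^g\tau, gh)$) should follow from the chain-rule behaviour of breakpoints under composition: the breakpoints of $gh$ are among the breakpoints of $h$ together with the $h$-preimages of the breakpoints of $g$, and the germ-permutation composes accordingly. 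The kernel $\{g : \tau_g = 1\}$ consists exactly of those $g\in\iet$ that have no genuine breakpoints, i.e.\ that act as a rotation — this is the group $\R/\Z$ of rotations (or a slightly larger but still abelian, hence amenable, subgroup), which is amenable.

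The main obstacle, and the step requiring genuine care, is getting the bookkeeping of the germ-permutation right so that it is honestly a functorial cocycle with values in $\sym(\R/\Z)$ and not merely some ill-defined assignment: one must check that the finite support is well-defined (the breakpoint set is finite for every $g\in\iet$ by definition of $\iet$), that the permutation $\tau_g$ really lies in $\Sym(\R/\Z) = F(\R/\Z)$ (finite support), that the cocycle equation holds on the nose (this is where one may need to enlarge slightly from "breakpoints" to a cleaner invariant, e.g.\ acting on the set of ``oriented germs'' $(\R/\Z)\times\{+,-\}$ and taking the induced finitely-supported permutation there, which the excerpt's macros $\gp,\gm$ suggest the authors do), and that the set where $\tau_g$ is trivial is precisely an amenable subgroup. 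I expect the verification of the cocycle identity to be the delicate point; everything else is routine once the right model for $\tau$ is fixed. After that, Theorem~\ref{T: rank2intro}/Theorem~\ref{T:rank2full} follows by combining this proposition with the preceding lemma (recurrence of $\iet(\Lambda)\acts\R/\Z$ when $\rk(\Lambda)\le 2$, hence extensive amenability by Theorem~\ref{T: recurrent}).
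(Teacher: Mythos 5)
Your overall strategy is exactly the paper's: reduce to Corollary~\ref{prop:functor:amenable:intro} with $F=\Sym$, handle the ``only if'' direction by Lemma~\ref{lemma=intermediatepty}, and produce a finitely supported permutation-valued cocycle on $\iet$ whose kernel is the rotation group. However, the one step you flag as ``the delicate point'' --- actually defining $\tau_g$ and verifying the cocycle identity --- is left entirely open, and as written your candidate (``package the combinatorial data of how $g$ permutes half-neighbourhoods as a finite-support permutation'', possibly on a doubled circle of oriented germs) is not a construction: it neither specifies a map into $\sym(\R/\Z)$ nor gives any mechanism for the cocycle identity to hold on the nose. Since the entire content of the proposition beyond formal reductions is this cocycle, this is a genuine gap. (A minor additional slip: with your stated convention $(\sigma,g)(\tau,h)=(\sigma\,{}^g\tau,gh)$, the identity you need is $\tau_{gh}=\tau_g\,{}^g\tau_h$, not $\tau_{gh}={}^g\tau_h\,\tau_g$; in a nonabelian target the order matters.)

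The device that closes the gap, and makes the verification a one-line computation rather than delicate bookkeeping, is to compare $g$ with its left-continuous twin. Let $\widetilde g$ be the unique \emph{left}-continuous permutation of $\R/\Z$ agreeing with $g$ away from the discontinuity points of $g$; then $g\mapsto\widetilde g$ is a group isomorphism of $\iet$ onto the group $\widetilde{\iet}$ of left-continuous interval exchanges, both sitting inside the full symmetric group of $\R/\Z$. Set $\tau_g=\widetilde g\,g^{-1}$. This is automatically a permutation of $\R/\Z$ whose (finite) support is the set of discontinuities of $g^{-1}$, so $\tau_g\in\sym(\R/\Z)$, and since both $g\mapsto g$ and $g\mapsto\widetilde g$ are homomorphisms one gets $\tau_{gh}=\widetilde g\,\widetilde h\,h^{-1}g^{-1}=(\widetilde g\,g^{-1})(g\,\widetilde h\,h^{-1}g^{-1})=\tau_g\,{}^g\tau_h$ with no case analysis. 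Finally $\tau_g=1$ iff $\widetilde g=g$ iff $g$ is continuous, i.e.\ a rotation, so $\iota(G)\cap(\{1\}\times G)$ is abelian and the corollary applies. With this definition substituted for your sketch, the rest of your argument is correct and coincides with the paper's proof.
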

\begin{proof} The ``only if part'' is obvious (Lemma~\ref{lemma=intermediatepty}). 

The "if part" is based on the method explained in Remark \ref{R: method}.
  If we replace the convention that interval exchanges are right-continuous by the condition that they are left continuous, we get a group of permutations of $\R/\Z$ that we denote by $\widetilde{\iet}$. The map $g \in \iet \mapsto \widetilde g \in\widetilde{\iet}$, where $\widetilde g$ is the unique left-continuous map that coincides with $g$ except on the points of discontinuity of $g$, is a group isomorphism. Then 
\begin{equation}\label{def:taug}\tau_g = \widetilde g g^{-1}\end{equation} is a permutation of $\R/\Z$ with finite support equal to the points of discontinuity of $g^{-1}$. Moreover 
\begin{equation}\label{E: cocycle}\tau_{gh} = \widetilde g \widetilde h h^{-1} g^{-1} = \tau_g g \tau_h g^{-1} = \tau_g({}^g\tau_h),\end{equation}
 so that the map 
  \begin{align*}
  \iota\colon  \iet&\to \Sym(\R/\Z)\rtimes \iet\\
  g&\mapsto (\tau_{g}, g)
  \end{align*}
  is an injective group homomorphism.
Observe that $\tau_g=1$ if and only if $g$ is continuous, i.e. a rotation. If $G\le\iet$ is a subgroup, the restriction of $\iota$ to $G$ takes values in $\sym(\R/\Z)\rtimes G$. Moreover $\iota(G)\cap\{1\}\times G$ consists of rotations and thereby is amenable. In the terminology of Remark \ref{R: method}, $g\mapsto \tau_g$ is a $\sym(\R/\Z)$-cocycle with amenable kernel.
 The conclusion follows from Corollary~\ref{prop:functor:amenable:intro}.\end{proof}

\subsection{Non-trivial boundary for subgroups of high rank}\label{S: Liouville}

 We take notations from the previous section. Let again $\Lambda$ be a finitely generated subgroup of $\R/\Z$. Note that $\Lambda$ identifies with an abelian subgroup of $\iet(\Lambda)$ acting on $\R/\Z$ by rotations.
 
 \begin{thm}\label{T: poisson}
Let $\Lambda< \R/\Z$ be a finitely generated group.

\item[1.] If $\rk(\Lambda)=1$, then $\iet(\Lambda)$ has the Liouville property for every symmetric, finitely supported measure.
\item[2.] Assume $\rk(\Lambda)\geq 3$. Let $G\le\iet(\Lambda)$ be a non-abelian group that contains $\Lambda$. Then any finitely supported, non-degenerate probability measure $\mu$ on $G$ has non-trivial Poisson--Furstenberg boundary.
 \end{thm}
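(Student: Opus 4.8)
The plan is to treat the two parts separately, each by reduction to an existing model.

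For part 1, I would exploit the fact — attributed in the introduction to \cite{Cornulier:Bourbaki} — that a finitely generated subgroup $G<\iet(\Lambda)$ with $\rk(\Lambda)=1$ can be realized inside the topological full group of a minimal subshift. When $\rk(\Lambda)=1$ the group of angles $\Lambda$ is (up to finite index) cyclic, so the relevant subshift lives over $\Z$, and its topological full group acts on a Cantor set with an explicit Bratteli--Vershik / $\Z$-odometer structure. I would then invoke the result of \cite{Matte} stating that every symmetric finitely supported measure on the topological full group of a minimal $\Z$-subshift has trivial Poisson boundary (the Liouville property passes to finitely generated subgroups because a subgroup quasi-isometrically embedded as an orbit of the same Schreier-type structure inherits the relevant recurrence/entropy estimates — more precisely one uses that the Liouville property for a finitely supported measure is inherited by finitely generated subgroups via the entropy criterion together with amenability, which we already know from Theorem~\ref{T:rank2}). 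Concretely: $G$ is amenable by Theorem~\ref{T:rank2}, and by Kaimanovich--Vershik the boundary is trivial iff the asymptotic entropy $h(G,\mu)$ vanishes; bounding $h(G,\mu)$ by the corresponding quantity for the ambient full group of a $\Z$-subshift, which vanishes by \cite{Matte}, finishes part~1.

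For part 2, the strategy is to produce a nontrivial bounded $\mu$-harmonic function on $G$ by pushing forward the random walk through the twisted embedding $\iota\colon \iet \hookrightarrow \sym(\R/\Z)\rtimes\iet$, $g\mapsto(\tau_g,g)$, constructed in the previous subsection (equations~\eqref{def:taug},~\eqref{E: cocycle}). Composing the $\mu$-random walk $(g_n)$ on $G$ with $\iota$ gives a random walk on $\sym(\R/\Z)\rtimes G$ whose $\sym(\R/\Z)$-component is $\tau_{g_n}$, governed by the inverted-orbit cocycle identity; this is formally the same situation as a switch--walk--switch lamplighter walk, with the base acting on $\R/\Z$. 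Since $G$ contains $\Lambda$ with $\rk(\Lambda)\geq 3$, the orbit of (a well-chosen) point under the $\mu$-walk on $\R/\Z$ has an injective Lipschitz image in a group of angles containing $\Z^3$; hence the induced Schreier graph contains a copy of (a recurrent-free) three-dimensional lattice and the walk on $\R/\Z$ is \emph{transient}. Transience of the base walk forces the ``lamp configuration'' $\tau_{g_n}$ to stabilize pointwise almost surely: for each point $p$ of $\R/\Z$ that is eventually escaped by the walk, the value of the configuration at $p$ is eventually frozen. The pointwise stabilization of $\tau_{g_n}$ yields a measurable $G$-equivariant map from the path space to a nontrivial space of limit configurations, giving a nontrivial invariant tail event, hence a nontrivial Poisson boundary; non-triviality (i.e.\ that the limit configuration is genuinely random, not a.s.\ constant) is where one uses that $G$ is non-abelian and strictly contains $\Lambda$, so that $\tau_g$ is not identically trivial and in fact the support of $\tau_{g_n}$ a.s.\ accumulates new points — exactly as for the lamplighter $(\Z/2\Z)\wr\Z^3$ in \cite{Kaimanovich-Vershik}.

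The main obstacle is the transience input in part~2: one must argue that $\rk(\Lambda)\ge 3$ really does make the random walk on the orbit $G x_0\subset\R/\Z$ transient, \emph{uniformly enough} that the inverted-orbit configuration stabilizes. The subtlety is that the Schreier graph of $G\acts\R/\Z$ is only Lipschitz-embedded in $\Lambda\supseteq\Z^3$, not bi-Lipschitz, so one gets transience of the image but must transfer it back; this is handled by the standard comparison of Dirichlet forms (a Lipschitz surjection cannot make a transient network recurrent) provided one also controls that the fibers of the embedding are finite, which follows from $\iet$-transformations having finitely many angles. A secondary point to get right is verifying that the switch--walk--switch reduction genuinely applies with the non-abelian base action — here one cites Proposition~\ref{prop:inverted_orbit} and the identity $\tau_{gh}=\tau_g\,({}^g\tau_h)$ to identify the lamp process with the inverted orbit, and then the entropy estimate for lamplighter-type walks over transient bases from \cite{Kaimanovich-Vershik}. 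I would organize the write-up as: (a) recall $\iota$ and the lamp process; (b) prove transience of $G\acts\R/\Z$ under $\rk\ge 3$; (c) deduce pointwise stabilization of $\tau_{g_n}$ and non-triviality of the limit using non-abelianity; (d) conclude via Kaimanovich--Vershik.
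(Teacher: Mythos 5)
Your overall route coincides with the paper's: part~1 is proved by realizing the group generated by the support of $\mu$ inside the topological full group of a minimal $\Lambda$-subshift and citing \cite{Matte}; part~2 uses the twisted embedding $g\mapsto(\tau_g,g)$, transience of the induced walk on $\R/\Z$, pointwise stabilization of $\tau_{g_n}$, and a lamplighter-style non-triviality argument. Two points in part~2 are, however, genuine gaps as written. First, the transience input: an injective Lipschitz embedding of the Schreier graph \emph{into} $\Lambda\supseteq\Z^3$ gives no lower bound on the graph and is perfectly compatible with recurrence (a line embeds Lipschitz-injectively into $\Z^3$); your subsequent appeal to ``Lipschitz surjections'' and ``finite fibers'' does not repair this. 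The correct argument runs the other way: since $\Lambda\le G$ acts freely on the sub-orbit $x_0+\Lambda$, the Schreier graph of $G\acts Gx_0$ (for a generating set containing generators of $\Lambda$) \emph{contains} the Cayley graph of $\Lambda\cong\Z^{d}\times F$ with $d\ge3$ as a subgraph, hence is transient by Rayleigh monotonicity; and since $\mu$ is only assumed non-degenerate, not symmetric, one still needs the Baldi--Lohou\'e--Peyri\`ere comparison (Lemma~\ref{L: non symmetric transient}) to pass from the symmetrized walk to the $\mu$-walk. You address neither the direction of the comparison nor the asymmetry.

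Second, non-triviality of the limit. The assertion that ``the support of $\tau_{g_n}$ a.s.\ accumulates new points'' is neither obvious nor what is needed: the limit $\tau_\infty$ is an injective self-map of $\R/\Z$ obtained by composing the increments in a twisted way, and a priori it could be a.s.\ equal to a single deterministic map, which would make the tail $\sigma$-field trivial. The paper rules this out by a stationarity argument: if $\tau_\infty$ were deterministic, its Dirac mass would be $\mu$-stationary, so $G$ would fix it under the action $g\cdot f=\tau_g\circ gfg^{-1}$; one then needs Lemma~\ref{L: abelian stabilizers}, showing that no $\Lambda$-coset-preserving injection is fixed by a non-abelian $G$ containing $\Lambda$. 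This is exactly where both hypotheses on $G$ enter (via the observation that $\Lambda$-invariance of $x\mapsto f(x)-x$ forces $f$ to be a coset-wise translation, while conjugation by a non-rotation either fixes $f$ or moves it on infinitely many points, whereas $\tau_g$ has finite support); your sketch does not contain this step. A minor point on part~1: the theorem of \cite{Matte} requires a complexity hypothesis on the subshift, so you must verify (as Lemma~\ref{L: complexity} does) that the subshift arising from a rank-one $\Lambda$ has linear complexity, rather than quoting the Liouville property for arbitrary minimal $\Z$-subshifts.
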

We do not know whether part 1 can be extended to the case $\rk(\Lambda)=2$. 
 
The proof of part 1 is postponed to \S~\ref{S: top full group}. To prove part 2, we will use the following fact that allows to restrict to symmetric measures. Given a probability measure $\mu$ on a countable group $G$, we denote $\check{\mu}$ the  measure $\check{\mu}(g)=\mu(g^{-1})$

\begin{lemma}[Baldi, Lohou\'e and Pery\`ere~\cite{Baldi-Lohoue-Peryiere}] \label{L: non symmetric transient}
Let $G$ be a countable group acting transitively on a set $X$ and $\mu$ be a probability measure on $G$. Consider the symmetric measure $\nu=\frac12 (\mu+\check{\mu})$. Assume that the Markov chain on $X$ induced by $\nu$ is transient. Then so is the Markov chain induced by $\mu$.
\end{lemma}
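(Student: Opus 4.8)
The plan is to reduce the statement to a pointwise comparison of the Green functions of the two induced chains, and to obtain that comparison from a resolvent inequality for accretive operators. Fix the base point $x_0\in X$ and work on the complex Hilbert space $\ell^2(X)$ with counting measure. Let $P_\mu,P_\nu$ be the transition operators of the chains induced by $\mu$ and $\nu$, so that $(P_\mu f)(x)=\sum_g \mu(g)f(gx)$, i.e.\ $P_\mu(x,y)=\mu(\{g:gx=y\})$, and similarly for $\nu$. A direct computation using transitivity shows that $P_\mu$ is doubly stochastic: its rows sum to $1$ because $\mu$ is a probability measure, and its columns sum to $1$ because each $g$ sends a given $y$ to the unique preimage $g^{-1}y$. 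Hence $\|P_\mu\|_{\ell^2\to\ell^2}\le 1$ by interpolation between $\ell^1$ and $\ell^\infty$, and likewise $\|P_\nu\|\le 1$. Moreover $P_\mu^*=P_{\check\mu}$, so $P_\nu=\tfrac12(P_\mu+P_{\check\mu})=\operatorname{Re}(P_\mu)$ is self-adjoint. Recalling that a chain is transient exactly when its Green function $\sum_{n\ge 0}P^n(x_0,x_0)$ is finite, and that $P^n(x_0,x_0)=\langle P^n\delta_{x_0},\delta_{x_0}\rangle\ge 0$, this Green function equals $\lim_{s\uparrow 1}\mathcal G(s)$, where $\mathcal G(s):=\sum_{n\ge 0}s^n\langle P^n\delta_{x_0},\delta_{x_0}\rangle=\langle (I-sP)^{-1}\delta_{x_0},\delta_{x_0}\rangle$ for $0\le s<1$. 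It thus suffices to prove $\mathcal G_\mu(s)\le \mathcal G_\nu(s)$ for every $s\in[0,1)$.

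The key step I would isolate is the following inequality for an invertible operator $B$ whose real part $C:=\operatorname{Re}(B)=\tfrac12(B+B^*)$ is positive and invertible: for every vector $\xi$,
\[ \operatorname{Re}\langle B^{-1}\xi,\xi\rangle \ \le\ \langle C^{-1}\xi,\xi\rangle. \]
To prove it I would set $\eta=B^{-1}\xi$, so that $\operatorname{Re}\langle B^{-1}\xi,\xi\rangle=\operatorname{Re}\langle \eta,B\eta\rangle=\langle\eta,C\eta\rangle$, whereas $\langle C^{-1}\xi,\xi\rangle=\langle C^{-1}B\eta,B\eta\rangle$; writing $B=C+iD$ with $D=\operatorname{Im}(B)$ self-adjoint and expanding, the cross terms are purely imaginary and one is left with $\langle C^{-1}B\eta,B\eta\rangle=\langle\eta,C\eta\rangle+\langle C^{-1}D\eta,D\eta\rangle\ge\langle\eta,C\eta\rangle$ since $C^{-1}\ge 0$. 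I would then apply this with $B=I-sP_\mu$: since $\|P_\nu\|\le 1$ we have $C=\operatorname{Re}(B)=I-sP_\nu\ge(1-s)I>0$, so $B$ is accretive and invertible with invertible real part. Taking $\xi=\delta_{x_0}$, the left-hand side is $\operatorname{Re}\mathcal G_\mu(s)=\mathcal G_\mu(s)$ (the series has nonnegative, hence real, terms) and the right-hand side is $\mathcal G_\nu(s)$, which yields the comparison.

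Finally, bounding $\mathcal G_\nu(s)=\sum_n s^nP_\nu^n(x_0,x_0)\le\sum_n P_\nu^n(x_0,x_0)=G_\nu(x_0,x_0)$, which is finite by the assumed transience of the $\nu$-chain, I obtain $\mathcal G_\mu(s)\le G_\nu(x_0,x_0)$ uniformly in $s<1$; letting $s\uparrow 1$ shows that the Green function of the $\mu$-chain is finite, i.e.\ the $\mu$-chain is transient, as wanted. I expect the resolvent inequality of the second paragraph to be the main obstacle: the probabilistic ingredients (double stochasticity giving $\|P_\mu\|\le 1$, the identity $P_\nu=\operatorname{Re}(P_\mu)$, and the generating-function reformulation of transience) are routine, but the comparison $\mathcal G_\mu\le\mathcal G_\nu$ genuinely needs the accretive-operator estimate and cannot be obtained by any naive term-by-term domination, which in fact fails.
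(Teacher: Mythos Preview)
Your argument is correct. The paper does not give its own proof: it simply states that the case $G=X$ is \cite[Proposition~1]{Baldi-Lohoue-Peryiere} and that the same proof works for transitive actions. What you have written is precisely that proof, carried out in the setting of a transitive $G$-action: the transition operator $P_\mu$ is a contraction on $\ell^2(X)$ with $\operatorname{Re}(P_\mu)=P_\nu$, and the accretive-operator inequality $\operatorname{Re}\langle B^{-1}\xi,\xi\rangle\le\langle(\operatorname{Re}B)^{-1}\xi,\xi\rangle$ applied to $B=I-sP_\mu$ gives the pointwise domination of Green functions. One small remark: the double stochasticity of $P_\mu$ (hence $\|P_\mu\|_{\ell^2}\le 1$) follows already from the fact that $G$ acts by bijections and does not use transitivity; transitivity is only relevant insofar as it makes the notion of ``the induced chain is transient'' unambiguous, but your inequality $G_\mu(x_0,x_0)\le G_\nu(x_0,x_0)$ holds at every point $x_0$ anyway.
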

\begin{proof} For $G=X$ acting on itself, this is exactly~\cite[Proposition 1]{Baldi-Lohoue-Peryiere}, and the proof extends with no changes to group actions.
\end{proof}
 
 Let $(G, \mu)$ be as in part 2 of Theorem~\ref{T: poisson}.
 For the proof, it will be convenient to consider the \emph{right} random walk $g_n=h_1\cdots h_n$ with step measure $\mu$. When speaking about recurrence or transience of left group actions we refer however to left random walk. Thereby with the right random walk notations, saying that the action of $G$  on the orbit of $x_0\in\R/\Z$ is transient means that the Markov chain $(g_n^{-1}x_0)$ is transient.
 Observe that this is the case under the assumptions of part 2 of Theorem~\ref{T: poisson}.

 For $g\in \iet$ let $\tau_g\in \sym(\R/\Z)$ be the permutation introduced in the proof of Proposition~\ref{T:rank2full}. 

\begin{lemma}
 For every $x\in \R/\Z$ the value $\tau_{g_n}(x)\in \R/\Z$ is almost surely constant for large enough $n$. We denote $\tau_\infty(x)$ its eventual value. The map $\tau_\infty\colon \R/\Z\to \R/\Z$ is an injective map that preserves every $\Lambda$-coset.
 \end{lemma}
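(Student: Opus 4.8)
The plan is to exploit the cocycle relation~\eqref{E: cocycle} to show that, along the right random walk $g_n=h_1\cdots h_n$, the point $\tau_{g_n}(x)$ changes only when the inverted trajectory of $x$ meets a fixed finite set, after which transience of the orbit pins it down. Set $D=\bigcup_{h\in\supp\mu}\supp\tau_h$, a finite subset of $\R/\Z$ since $\mu$ is finitely supported. From $g_n=g_{n-1}h_n$ and~\eqref{E: cocycle} one gets $\tau_{g_n}=\tau_{g_{n-1}}\cdot{}^{g_{n-1}}\tau_{h_n}$, where the permutation ${}^{g_{n-1}}\tau_{h_n}=g_{n-1}\tau_{h_n}g_{n-1}^{-1}$ is supported on $g_{n-1}\supp\tau_{h_n}\subseteq g_{n-1}D$. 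Hence, for a fixed $x$, $\tau_{g_n}(x)\neq\tau_{g_{n-1}}(x)$ forces $g_{n-1}^{-1}x\in D$. Now the chain $(g_n^{-1}x)_n$ is transient for every $x\in\R/\Z$: for $x$ in the orbit of the base point this is what is recorded just before the lemma (having passed, via Lemma~\ref{L: non symmetric transient}, to a symmetric step measure), and the same argument — each orbital Schreier graph is quasi-isometric to $\Lambda$, which has rational rank $\ge 3$, using $\Lambda\le G$ — applies verbatim to every orbit. Thus $(g_n^{-1}x)_n$ visits the finite set $D$ only finitely often almost surely, so $\tau_{g_n}(x)$ is eventually constant; call its eventual value $\tau_\infty(x)$.

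To turn $\tau_\infty$ into a genuine map $\R/\Z\to\R/\Z$ defined on a single event of full measure, note that the same relation gives $\supp\tau_{g_n}\subseteq\bigcup_{k<n}g_kD$, so the countable set $\mathcal D=\bigcup_{k\ge 0}g_kD$ contains the support of every $\tau_{g_n}$. Points outside $\mathcal D$ are fixed by all $\tau_{g_n}$; for the countably many points of the form $g_kd$ with $k\ge 0$ and $d\in D$, one applies the previous paragraph after conditioning on $\sigma(h_1,\dots,h_k)$, and a countable union of null events produces a full-measure event on which $\tau_{g_n}(x)$ stabilises simultaneously for every $x\in\R/\Z$.

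For injectivity, fix $x\neq y$. Almost surely there is $N$ with $\tau_{g_n}(x)=\tau_\infty(x)$ and $\tau_{g_n}(y)=\tau_\infty(y)$ for all $n\ge N$, and since $\tau_{g_N}$ is a permutation, $\tau_\infty(x)=\tau_{g_N}(x)\neq\tau_{g_N}(y)=\tau_\infty(y)$. For the $\Lambda$-cosets, recall $\tau_{g_n}=\widetilde{g_n}\,g_n^{-1}$ with $g_n,\widetilde{g_n}\in\iet(\Lambda)$ (the angles of $\widetilde{g_n}$ are among those of $g_n$, hence lie in $\Lambda$); writing $z=g_n^{-1}(x)$ we get $\tau_{g_n}(x)-x=(\widetilde{g_n}(z)-z)-(g_n(z)-z)\in\Lambda$, and passing to the limit gives $\tau_\infty(x)-x\in\Lambda$.

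The main obstacle is the first paragraph: one has to recognise that the cocycle identity confines the disagreement between $\tau_{g_n}$ and $\tau_{g_{n-1}}$ to a $g_{n-1}$-translate of the fixed finite set $D$, so that transience of the orbital chain — which is exactly what the hypothesis $\rk(\Lambda)\ge 3$, together with $\Lambda\le G$, buys — forces pointwise stabilisation. Once this is set up, the well-definedness of $\tau_\infty$ everywhere, its injectivity, and its $\Lambda$-equivariance are all routine.
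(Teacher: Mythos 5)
Your proof is correct and follows essentially the same route as the paper's: the cocycle relation localizes the changes of $\tau_{g_n}(x)$ to the times when $g_{n-1}^{-1}x$ lies in the finite set $D$ (the paper's $\Sigma$), and transience of the orbital chains — which the paper records just before the lemma, using $\rk(\Lambda)\ge 3$, $\Lambda\le G$ and Lemma~\ref{L: non symmetric transient} — gives eventual stabilization, injectivity being inherited from the permutations $\tau_{g_N}$. Your additional care about stabilization on a single full-measure event and the explicit computation showing $\tau_{g_n}(x)-x\in\Lambda$ are details the paper leaves implicit, but not a different argument.
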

 
  \begin{proof}
  Let $\Sigma\subset \R/\Z$ be the union of $\supp \tau_s$ for $s\in \supp\mu$. By transience, for every $x\in \R/\Z$ there exists a random $N$ so that $x\notin g_n\Sigma$ for $n\geq N$. Now observe that by the cocycle relation \eqref{E: cocycle} we have
   \[\tau_{g_n}=\tau_{h_1}\circ{}^{g_1} \tau_{h_2}\cdots\circ{}^{g_{n-1}} \tau_{h_n}\]
 and that the support of ${}^{g_{i-1}}\tau_{h_i} $ is contained in $g_{i-1}\Sigma$. Hence $\tau_{g_n}(x)=\tau_{g_N}(x)$ for every $n\geq N$. The fact that $
 \tau_\infty$ is injective follows immediately from the fact that $\tau_{g_n}$ is injective for every $n$, and it preserve every $\Lambda$-coset in $\R/\Z$ since so does $G$.
  \end{proof}
  Denote $\operatorname{Inj}_\Lambda(\R/\Z,\R/\Z)$ the set of all injective maps of the circle to itself that preserve every $\Lambda$ coset. The group $\iet(\Lambda)$ acts on $\operatorname{Inj}_\Lambda(\R/\Z,\R/\Z)$ by 
   \[g\cdot f=\tau_g\circ{}^g f=\tau_g\circ g\circ f\circ g^{-1},\]
where $g\in \iet(\Lambda)$ and $f\in \operatorname{Inj}_\Lambda(\R/\Z,\R/\Z)$.
Endow with the topology induced by the product of the \emph{discrete} topology on $\R/\Z$. 
 Then the above construction shows that $\tau_{g_n}$ converges almost surely to a $\tau_\infty\in\operatorname{Inj}_\Lambda(\R/\Z,\R/\Z)$. If $\nu$ is the distribution of $\tau_\infty$ then $(\operatorname{Inj}(\R/\Z,\R/\Z), \nu)$ is a quotient of the Poisson--Furstenberg boundary. To conclude the proof we only need to check that this quotient is not the trivial one, namely that $\nu$ is not concentrated on a single point.
 
 \begin{lemma}\label{L: abelian stabilizers}
Let $f\in \operatorname{Inj}_\Lambda(\R/\Z,\R/\Z)$ and $G$ be as in part 2 of Theorem~\ref{T: poisson}. Then there exists $g\in G$ so that $g\cdot f\neq f$.
\end{lemma}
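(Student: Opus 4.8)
The plan is to prove the statement by contradiction: assume that $g\cdot f=f$ for \emph{every} $g\in G$, and derive an absurdity. The first step is to use the rotations. Since $G$ contains $\Lambda$, realised as the group of rotations by elements of $\Lambda$, and rotations are continuous, we have $\tau_\lambda=1$ and hence $\lambda\cdot f=\lambda\circ f\circ\lambda^{-1}$ for $\lambda\in\Lambda$; the hypothesis then says that $f$ commutes with every such rotation. Write $f(x)=x+c(x)$ with $c(x)\in\Lambda$, which is legitimate because $f$ preserves each $\Lambda$-coset. The commutation relation $f(x+\lambda)=f(x)+\lambda$ forces $c(x+\lambda)=c(x)$ for all $\lambda\in\Lambda$, so $c$ is constant on each $\Lambda$-coset. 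Consequently $f$ acts on each coset as a bijective translation, so $f$ is in fact a bijection of $\R/\Z$ with inverse $f^{-1}(u)=u-c(u)$.

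Next I would rewrite the identity $g\cdot f=f$ purely in terms of angles. For $g\in\iet(\Lambda)$ put $\alpha_g(x)=gx-x\in\Lambda$; this is a right-continuous step function with finitely many jumps, and $\widetilde\alpha_g(x):=\widetilde g(x)-x$ agrees with $\alpha_g$ off the finite set $D_g$ of discontinuity points of $g$. Rewriting $g\cdot f=f$ as $\tau_g\circ g\circ f=f\circ g$ and using $\tau_g=\widetilde g\, g^{-1}$, the left-hand side evaluated at $x$ equals $\widetilde g(f(x))$, while the right-hand side equals $g(x)+c(g(x))=g(x)+c(x)$, since $g(x)$ lies in the $\Lambda$-coset of $x$ and $c$ is coset-constant. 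Subtracting $f(x)$ and using $c(x)=f(x)-x$ gives $\widetilde\alpha_g(f(x))=\alpha_g(x)$, and substituting $x=f^{-1}(u)=u-c(u)$ yields the clean equivalent form
\[
\widetilde\alpha_g(u)\;=\;\alpha_g\bigl(u-c(u)\bigr)\qquad\text{for all }u\in\R/\Z\text{ and all }g\in G .
\]

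Now I would extract from this a statement about the translation symmetries of the angle functions. Fix a $\Lambda$-coset $C$ and let $\gamma=c(C)\in\Lambda$ be the constant value of $c$ on $C$. For $u\in C\setminus D_g$ the displayed identity reads $\alpha_g(u)=\alpha_g(u-\gamma)$. Since $\rk(\Lambda)\geq 2$, the group $\Lambda$ contains an element of infinite order, which is necessarily irrational, so $\Lambda$ — hence the coset $C$ — is dense in $\R/\Z$; thus $C\setminus D_g$ is dense, and two right-continuous step functions agreeing on a dense set coincide. Therefore $\gamma$ is a translation symmetry of every $\alpha_g$, $g\in G$, so $c$ takes all its values in $P:=\bigcap_{g\in G}\{t\in\R/\Z:\alpha_g(\cdot-t)=\alpha_g\}$. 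Finally, because $G$ is non-abelian it contains a non-rotation $g_0$ (a group consisting of rotations is abelian), so $\alpha_{g_0}$ is non-constant and hence $\{t:\alpha_{g_0}(\cdot-t)=\alpha_{g_0}\}$, and a fortiori $P$, is a proper subgroup of $\R/\Z$. Since $c(u)\in P$ for every $u$, the displayed identity for $g=g_0$ reduces to $\widetilde\alpha_{g_0}(u)=\alpha_{g_0}(u)$ for all $u$, i.e. $\widetilde g_0=g_0$. This contradicts the fact that the non-rotation $g_0$ has a point of discontinuity, where it differs from its left-continuous version $\widetilde g_0$. Hence some $g\in G$ satisfies $g\cdot f\neq f$.

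I expect the only delicate points to be bookkeeping ones: correctly juggling the left/right-continuity conventions together with the identity $\tau_g=\widetilde g\, g^{-1}$ in the second step so as to land exactly on the angle identity, and honestly using right-continuity of $\alpha_g$ when passing from "agreement on a dense subset of $C$" to "agreement everywhere" in the third step. Neither is a genuine obstacle, but both must be written out with care.
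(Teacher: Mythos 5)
Your proof is correct, and it shares the paper's skeleton: argue by contradiction, use the rotations in $\Lambda\le G$ (for which $\tau_\lambda=1$) to force $f$ to act on each $\Lambda$-coset as a translation by a fixed element of $\Lambda$, then exploit density of the cosets together with a non-rotation $g_0\in G$. The endgame, however, is executed differently. The paper proves a dichotomy: since $g\circ f\circ g^{-1}$ and $f$ both restrict on every coset to restrictions of interval exchange transformations, they are either equal or differ at infinitely many points of every coset; in the first case $g\cdot f=\tau_g\circ f\neq f$ because $\tau_{g}$ is nontrivial and $f$ injective, and in the second case composing with the \emph{finitely} supported $\tau_g$ cannot repair infinitely many disagreements. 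You instead convert the stabilizer condition into the explicit angle identity $\widetilde\alpha_g(u)=\alpha_g\bigl(u-c(u)\bigr)$, use density plus right-continuity to show every value of $c$ is a period of every $\alpha_g$, and conclude $\widetilde\alpha_{g_0}=\alpha_{g_0}$, i.e.\ $g_0$ is continuous --- contradicting that a non-rotation in $\iet$ must be discontinuous. Both arguments are sound; the paper's is softer (no need for $f^{-1}$ or period groups), while yours makes explicit exactly what the stabilizer equation says about the angle functions. One cosmetic remark: your observation that $P$ is a \emph{proper} subgroup of $\R/\Z$ is never used --- the conclusion only needs that $c(u)$ is a period of $\alpha_{g_0}$, which you already have from the preceding step.
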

\begin{proof}
Assume that $G$ stabilizes $f$. A rotation $r\in \Lambda<G$ acts on $f$ by conjugation. Hence the increment $x\mapsto f(x)-x$ is $\Lambda$-invariant. It follows that $f$ coincides on every $\Lambda$-coset with a translation by a fixed element in $\Lambda$. Observe that this implies the following: if $g\in G$ is arbitrary, then $g \circ f \circ g^{-1}$ is either equal to $f$ or it differs from $f$ on infinitely many points. Indeed, observe that the restriction of $g\circ f \circ g^{-1}$ on every $\Lambda$-coset coincides with the restriction of an interval exchange transformation, and two interval exchange transformations are either equal or differ on infinitely many points in every $\Lambda$-coset (since these are dense).
Now consider $g\in G$ which is not a rotation. For such a $g$ we have $\tau_g\neq \operatorname{Id}$. Hence $g\cdot f=\tau_g\circ g\circ f \circ g^{-1}$ cannot be equal to $f$, in the first case since $\tau_g$ is nontrivial, and in the second case since it is finitely supported. \qedhere
\end{proof}

\begin{proof}[Proof of Theorem~\ref{T: poisson} (part 2)]  Assume that $\tau_\infty$ is deterministic. Then the Dirac mass on $\tau_\infty$ is a $\mu$-stationary measure. This implies that the support of $\mu$ stabilizes $\tau_\infty$, hence so does $G$ since $\mu$ is non-degenerate. But this is impossible by Lemma~\ref{L: abelian stabilizers} \qedhere

 \end{proof}
 
 \begin{question}
With the above notations, is  $(\operatorname{Inj}_\Lambda(\R/\Z,\R/\Z), \nu)$ isomorphic to the Poisson--Furstenberg boundary of $(G, \mu)$?
 
 \end{question}

  Similar ideas yield the following alternative for subgroup containing an irrational rotation, that could also be proven more directly.
  \begin{prop}
  Let $G$ be a finitely generated subgroup of $\iet$ that contains an irrational rotation. Then either $G$ is an abelian group of rotations, or $G$ has exponential growth.
  
  \end{prop}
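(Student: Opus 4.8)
The plan is to prove the dichotomy by leveraging the twisted embedding $\iota\colon\iet\hookrightarrow\sym(\R/\Z)\rtimes\iet$ already constructed, together with the fact that recurrence of the circle action of an abelian rotation subgroup forces an injective Lipschitz embedding into a low-rank group. First I would split on whether $G$ consists entirely of rotations. If it does, then since $G$ is a finitely generated abelian subgroup of $\R/\Z$, it is isomorphic to $\Z^d\times F$, which is abelian of polynomial growth, and we are in the first alternative. So assume $G$ contains an element $g_0$ which is not a rotation; equivalently $\tau_{g_0}\neq\operatorname{Id}$, i.e.\ $g_0$ has a genuine discontinuity.

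Next I would fix an irrational rotation $r\in G$ and consider the cyclic group $\langle r\rangle\cong\Z$, which acts minimally on $\R/\Z$. The key observation is that conjugates $r^n g_0 r^{-n}$ have discontinuity sets obtained by rotating the discontinuity set of $g_0$ by $r^n$; since the orbit $\{r^n x : n\in\Z\}$ is dense and the angles involved are fixed, the commutators $[r^n, g_0] = r^n g_0 r^{-n} g_0^{-1}$ produce, via the cocycle relation~\eqref{E: cocycle}, permutations $\tau$ in $\sym(\R/\Z)$ whose supports can be pushed to be pairwise disjoint for a suitable sparse subsequence $n_1<n_2<\cdots$. This lets me exhibit inside $G$ (or rather inside $\iota(G)$, hence inside $G$ itself by injectivity) a free abelian group of arbitrarily large rank generated by such commutators — or, more to the point, to run a ping-pong / Rosenblatt-type argument showing that the number of elements of $G$ of word length $\leq n$ grows exponentially. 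The cleanest route is probably: the subgroup generated by $\{r^{n_i} g_0 r^{-n_i} : i\}$ surjects onto an infinite restricted direct sum (via the disjoint-support structure of the associated $\tau$'s), and a group surjecting onto $\bigoplus_{i}\Z/k\Z$ for the relevant torsion, sitting inside a finitely generated group, is a standard source of exponential growth; alternatively one directly builds a free sub-semigroup.

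The main obstacle I anticipate is making precise the claim that the discontinuity points can be separated: one must show that for $g_0$ not a rotation and $r$ an irrational rotation, the sets $\supp\tau_{r^n g_0 r^{-n}} = r^n\cdot(\text{disc}(g_0^{-1}))$ are, along a sparse enough sequence of $n$, disjoint \emph{and} that the group they generate inside $\sym(\R/\Z)\rtimes\langle r\rangle$ is genuinely large rather than collapsing — here one uses that the translation action of $\langle r\rangle$ on its orbit is free, so the semidirect product $\sym(\R/\Z)\rtimes\langle r\rangle$ restricted to a single $\langle r\rangle$-orbit looks like a lamplighter-type group $\big(\bigoplus_{\Z}(\text{finite})\big)\rtimes\Z$, which has exponential growth as soon as the lamp group is non-trivial. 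Thus once one knows $\tau_{g_0}\neq\operatorname{Id}$ restricts non-trivially to some $\langle r\rangle$-orbit, exponential growth of $G$ follows from exponential growth of this lamplighter quotient of a subgroup of $G$. I would therefore organize the write-up as: (1) the rotation case is trivially abelian; (2) pass to an $\langle r\rangle$-orbit $\mathcal O\subset\R/\Z$ on which $\tau_{g_0}$ acts non-trivially, which exists since $\mathcal O$ is dense and $\supp\tau_{g_0}$ is a non-empty finite set whose $\langle r\rangle$-translates cover it; (3) observe the image of the subgroup $\langle r, g_0\rangle$ under restriction of $\iota$ to this orbit is a quotient mapping onto a non-trivial group of the form $\big(\bigoplus_{\Z}A\big)\rtimes\Z$; (4) conclude exponential growth of $\langle r,g_0\rangle$, hence of $G$, since exponential growth passes up from subgroups.
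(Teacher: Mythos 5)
Your reduction to the case where $G$ contains a non-rotation $g_0$ is fine, and exploiting the cocycle $\tau$ is the right starting point, but the core of the argument --- steps (2)--(4) --- has a genuine gap. First, ``restricting $\iota$ to an $\langle r\rangle$-orbit $\mathcal O$'' is not well defined: $\tau_g(x)-x$ lies in the group of angles $\Lambda(\langle r,g_0\rangle)$, which is in general strictly larger than $\Z\alpha$ (where $\alpha$ is the angle of $r$), so $\tau_{g_0}$ need not map $\mathcal O$ into itself; you can only restrict to $\Lambda$-cosets, and on a $\Lambda$-coset the ``base'' of your would-be wreath product is the full Schreier action of $\langle r,g_0\rangle$, not $\Z$. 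Second, and more seriously, there is no \emph{quotient} of the relevant subgroup of $\sym(C)\rtimes\iet$ of the form $\bigl(\bigoplus_{\Z}A\bigr)\rtimes\Z$ with $A$ nontrivial: for $C$ infinite the only proper nontrivial quotient of $\sym(C)$ is the sign map, which is conjugation-invariant and hence yields only a direct factor $\Z/2\Z$, never a lamplighter. So the large group you need must be found as a subgroup or sub-semigroup, and that is exactly where the obstacle you ``anticipate'' lives: the supports of ${}^{v}\tau_{g_0}$ for $v$ a word in $r$ and $g_0$ (rather than a pure power of $r$) are translates of $\supp\tau_{g_0}$ by elements of $\Lambda$ that you cannot keep pairwise disjoint uniformly in the word length, since each occurrence of $g_0$ perturbs the translation by an uncontrolled angle. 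Finally, the fallback claim that a subgroup surjecting onto $\bigoplus_i\Z/k\Z$ is ``a standard source of exponential growth'' is false: Grigorchuk's group has intermediate growth yet contains infinite direct sums of finite groups.

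The paper's proof sidesteps all of this probabilistically: it takes the non-symmetric measure $\mu=\frac13(\delta_r+\delta_{g_0}+\delta_{g_0^{-1}})$, whose induced walk on the Schreier graphs of the angle set is drifted in the direction of $\alpha$ and hence transient; transience makes $\tau_{g_n}$ stabilize pointwise to a random limit $\tau_\infty$, which is non-deterministic by the stabilizer argument of Lemma~\ref{L: abelian stabilizers}, so $(G,\mu)$ has non-trivial Poisson--Furstenberg boundary; exponential growth then follows from the entropy criterion of Kaimanovich--Vershik for finitely supported measures. If you want a combinatorial proof you would need to actually exhibit a free sub-semigroup, which requires precisely the support control that is missing above --- the probabilistic route is what replaces it.
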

  
  \begin{proof}
  Suppose that $G$ is nonabelian. Let $r\in G$ be an irrational rotation and $h\in G$ be an element which is not a rotation, so that $\tau_h\neq 0$. Consider the non-symmetric measure $\mu=\frac{1}{3}\delta_r+\frac{1}{3}\delta_{h^{-1}}+\frac{1}{3}\delta_h$. The random walk on the Schreier graphs of the set of angles is drifted in the direction of $r$ (perhaps after replacing $r$ with a big enough power). The same idea as in the previous proof shows that $\mu$ has non-trivial Poisson--Furstenberg boundary. Existence of a finitely supported measure with non-trivial boundary implies that $G$ has exponential growth, see~\cite[Proposition 1.4]{Kaimanovich-Vershik}.
  
  \end{proof}

\subsection{Connection with topological full groups and Liouville property in rank 1} \label{S: top full group}
In this section we show part 1 of Theorem~\ref{T: poisson}.  We first explain in some details a construction,  that has already appeared implicitly in~\cite{Cornulier:Bourbaki}, showing that the study of finitely generated subgroups of $\iet$ is closely related to the study of the \emph{topological full groups} of a special family of minimal actions of finitely generated abelian groups on the Cantor set.

Let $\Gamma$ be a group acting by
homeomorphisms on a topological space $\X$. The \emph{topological full group} of the action, $[[\Gamma]]$, is 
the group of all homeomorphisms $h$ of $\X$ such that
every point of $\X$ admits a neighborhood where $h$ agrees with an
element of $\Gamma$. 

The dynamical system $(\Gamma, \X)$ is \emph{minimal} if there are no non-trivial closed $\Gamma$-invariant subsets in $\X$.

A \emph{Cantor  $\Gamma$-system} is a dynamical system $(\Gamma, \X)$ where $\X$ is the Cantor set.
Let $A$ be a finite alphabet. The \emph{$\Gamma$-shift} over $A$ is the Cantor system $(\Gamma, A^\Gamma)$ where $\Gamma$ acts on $A^\Gamma$ by translations. A \emph{$\Gamma$-subshift} is a Cantor $\Gamma$-system $(\Gamma, \X)$ where $\X\subset A^\Gamma$ is a closed $\Gamma$-invariant subset.

In what follows we use the notation $\vee_{\alpha\in I} \mathcal{P}_\alpha$ for the join  of a family of partitions $(\mathcal P_\alpha)_{\alpha\in I}$ of a set. 
Recall the following elementary criterion to establish whether a Cantor $\Gamma$-system is conjugate to a subshift. For a proof see e.g.~\cite[Fait 2.2]{Cornulier:Bourbaki}. 
\begin{lemma}\label{L: subshift}
Let $(\Gamma, \X)$ be a Cantor $\Gamma$-system. then $(\Gamma, \X)$ is conjugate to a $\Gamma$-subshift over a finite alphabet if and only if there exists a finite partition $\mathcal{P}$ of $\X$ into clopen sets so that the partition $\vee_{\gamma\in \Gamma} \gamma \mathcal P$ is the point partition of $\X$ (i.e. for any $x\neq y\in \X$ there exists $\gamma\in \Gamma$ so that the partition $\gamma\mathcal{P}$ separates $x$ and $y$).
\end{lemma}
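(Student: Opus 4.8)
The plan is to prove both implications by the standard symbolic coding argument, using the compactness of $\X$ to upgrade a continuous injection into a conjugacy.

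\textbf{The ``if'' direction.} Suppose $\mathcal P=\{P_a : a\in A\}$ is a finite clopen partition of $\X$ indexed by a finite alphabet $A$, such that $\vee_{\gamma\in\Gamma}\gamma\mathcal P$ is the point partition. I would define the coding map $\phi\colon \X\to A^\Gamma$ by declaring $\phi(x)(\gamma)=a$ to be the unique letter with $\gamma^{-1}x\in P_a$, and then verify three things. (a) \emph{Equivariance}: with the convention $(\gamma\omega)(\delta)=\omega(\gamma^{-1}\delta)$ for the shift action, a one-line computation gives $\phi(\gamma x)(\delta)=\phi(x)(\gamma^{-1}\delta)$, i.e. $\phi(\gamma x)=\gamma\,\phi(x)$. (b) \emph{Continuity}: for each fixed $\gamma$ the coordinate map $x\mapsto\phi(x)(\gamma)$ is locally constant, since the $P_a$ are clopen; hence $\phi$ is continuous into the product $A^\Gamma$. (c) \emph{Injectivity}: if $\phi(x)=\phi(y)$ then for every $\gamma$ the points $\gamma^{-1}x,\gamma^{-1}y$ lie in a common part of $\mathcal P$, equivalently $x,y$ lie in a common part of $\gamma\mathcal P$; thus $x,y$ lie in a common atom of $\vee_\gamma\gamma\mathcal P$, which is the point partition, so $x=y$. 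Since $\X$ is compact and $A^\Gamma$ is Hausdorff, $\phi$ is a homeomorphism onto its image $\Y=\phi(\X)$, which is closed and (by (a)) $\Gamma$-invariant; so $(\Gamma,\X)$ is conjugate to the subshift $(\Gamma,\Y)$.

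\textbf{The ``only if'' direction.} Suppose $\psi\colon\X\to\Y$ is a conjugacy onto a subshift $\Y\subseteq A^\Gamma$ with $A$ finite. Let $C_a=\{\omega\in A^\Gamma : \omega(e)=a\}$ be the cylinder over the neutral element $e\in\Gamma$; the sets $C_a\cap\Y$ form a finite clopen partition of $\Y$, and I pull it back through $\psi$ to a finite clopen partition $\mathcal P=\{\psi^{-1}(C_a\cap\Y):a\in A\}$ of $\X$. Since conjugacy carries $\vee_\gamma\gamma\mathcal P$ to the corresponding join in $\Y$, it suffices to check the statement in $\Y$: the partition $\gamma\mathcal P$ separates $\omega,\omega'\in\Y$ exactly when $\omega(\gamma)\neq\omega'(\gamma)$, so any two distinct points of $\Y$ are separated by some $\gamma\mathcal P$, i.e. $\vee_\gamma\gamma\mathcal P$ is the point partition.

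The only real subtlety — and the step I would be most careful about — is keeping the left/right conventions straight so that $\phi$ comes out genuinely equivariant, and invoking compactness of $\X$ at the end rather than attempting to construct a continuous inverse to $\phi$ by hand.
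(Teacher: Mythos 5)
Your proof is correct; the paper itself does not prove this lemma but defers to \cite[Fait 2.2]{Cornulier:Bourbaki}, and your argument is exactly the standard coding argument that reference employs (code by the atom of $\mathcal P$ containing $\gamma^{-1}x$, check equivariance, continuity and injectivity, and use compactness to get a conjugacy; conversely pull back the cylinder partition at the identity). The only cosmetic point is that in the converse direction some of the sets $\psi^{-1}(C_a\cap\Y)$ may be empty and should be discarded to get a genuine partition.
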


As before, let $\Lambda<\R/\Z$ be a finitely generated and infinite subgroup, and let $\Sigma=\{x_1,\ldots, x_r\}$ be a finite subset of $\R/\Z$. We denote $\iet(\Lambda;\Sigma)$ the subgroup of $\iet(\Lambda)$ consisting of interval exchange transformations so that all extrema of the defining intervals lie in cosets $x_i+\Lambda$ of points in $\Sigma$. Every finitely generated $G<\iet$ is contained in $\iet(\Lambda, \Sigma)$, where $\Lambda=\Lambda(G)$ and $\Sigma$ is the set of extrema of the defining intervals of elements in some generating set of $G$.

A construction going back essentially to~\cite{keane} realizes $\iet(\Lambda,\Sigma)$ as a group of homeomorphisms of a Cantor set. Using Lemma \ref{L: subshift}, we will see through this construction that $\iet(\Lambda; \Sigma)$ is isomorphic to the topological full group of a minimal $\Lambda$-subshift, and even that the actions are semiconjugate.

\begin{prop}\label{prop:cantor}
Let $\Lambda<\R/\Z$ be infinite and finitely generated and $\Sigma$ as above. There is a minimal $\Lambda$-subshift $(\Lambda,X)$, an isomorphism $\pi \colon [[\Lambda]] \to \iet(\Lambda; \Sigma)$ and a continuous surjective map $h \colon X \to \R/\Z$ such that $h(g \cdot x) = \pi(g) \cdot x$ for all $x \in X$ and $g \in [[\Lambda]]$.
\end{prop}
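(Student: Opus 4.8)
The plan is to carry out explicitly the classical "symbolic coding" construction attached to a finite set of cut points, and then verify that it has all the required properties using Lemma~\ref{L: subshift}. First I would fix a finite symmetric generating set $\lambda_1,\dots,\lambda_k$ of $\Lambda$ and consider the countable set $D = \bigcup_{i=1}^r (x_i + \Lambda) \subset \R/\Z$ of all translates of the cut points; since $\Lambda$ is infinite, $D$ is dense, and the complement of $D$ together with a doubling of the points of $D$ (each point $d\in D$ replaced by a left-limit copy $d^-$ and a right-limit copy $d^+$) carries a natural compact, totally disconnected, metrizable topology with no isolated points, hence is a Cantor set $\tilde X$ equipped with a surjection $h\colon \tilde X \to \R/\Z$ (collapsing each pair $d^\pm$) and a $\Lambda$-action by rotations. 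Because every element of $\iet(\Lambda;\Sigma)$ has all its discontinuities in $D$, it lifts to a homeomorphism of $\tilde X$; this gives the homomorphism $\pi^{-1}$, and one checks it is a bijection onto $[[\Lambda]]$ by the standard argument that a homeomorphism of $\tilde X$ locally given by rotations in $\Lambda$ descends to a right-continuous permutation of $\R/\Z$ with finitely many angles, all in $\Lambda$, and cut points in $D$.

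Next I would produce the subshift. By Lemma~\ref{L: subshift} it suffices to exhibit a finite clopen partition $\mathcal P$ of $\tilde X$ whose $\Lambda$-translates separate points. The natural candidate is the partition indexed by "which of the arcs cut out by $\Sigma$ (and on which side of a point of $\Sigma$) a point sits"; more precisely, enumerate $\Sigma$ and take the clopen sets determined by the location of a point of $\tilde X$ relative to the $r$ marked points $x_1,\dots,x_r$. Since $\bigcup_{\gamma\in\Lambda}\gamma\mathcal P$ refines the partition into preimages under $h$ of arcs with all endpoints in $D$, and such arcs separate any two distinct points of $\R/\Z$, while the two copies $d^\pm$ of a point $d\in D$ are separated by any $\mathcal P$-piece with a boundary at $d$: this shows the join is the point partition. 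Hence $(\Lambda, \tilde X)$ is conjugate to a $\Lambda$-subshift $(\Lambda, X)$ over the alphabet $\mathcal P$, and we transport $h$ and $\pi$ through this conjugacy. Minimality follows because every $\Lambda$-orbit in $\R/\Z$ (being a coset of the dense infinite group $\Lambda$) is dense, and $h$ is a finite-to-one continuous surjection intertwining the $\Lambda$-actions, so a proper closed invariant subset of $X$ would push forward to a proper closed invariant subset of $\R/\Z$, which is impossible; a small additional argument handles the doubled points, using that the orbit closure of any point of $X$ surjects onto $\R/\Z$ and the fibers are finite.

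Finally I would verify the semiconjugacy identity $h(g\cdot x) = \pi(g)\cdot x$. This is essentially tautological from the construction: both $[[\Lambda]]$ and $\iet(\Lambda;\Sigma)$ act by piecewise rotations, $h$ is built so that the lift of an interval exchange acts on $\tilde X$ by the "same" local rotations, and the equality holds on the dense set of points of $\tilde X$ not sitting over a cut point of $\pi(g)$, hence everywhere by continuity (using that $h$ is continuous and the actions are by homeomorphisms). The main obstacle I anticipate is purely bookkeeping: setting up the doubled-point Cantor space $\tilde X$ and its topology carefully enough that (a) the $\Lambda$-action and the lifts of interval exchanges are genuinely continuous, (b) $\pi$ is surjective onto $[[\Lambda]]$ rather than merely injective, and (c) the separation-of-points argument for Lemma~\ref{L: subshift} correctly accounts for the two copies of each point of $D$. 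None of these steps is deep, but they require care about one-sided limits; I would organize the write-up around the single object $\tilde X$ and reduce everything else to its properties. Since the construction "has already appeared implicitly in~\cite{Cornulier:Bourbaki}", I would also feel free to cite \cite{Cornulier:Bourbaki} and \cite{keane} for the parts that are standard and concentrate the detail on the subshift and semiconjugacy claims.
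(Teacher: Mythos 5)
Your construction is essentially the paper's: double the points of $\Sigma+\Lambda$ to get a Cantor set with a $\Lambda$-action, lift interval exchanges to homeomorphisms to get $\pi$, and invoke Lemma~\ref{L: subshift} to recognize a subshift; the semiconjugacy and minimality arguments also match. The one step that can fail is your choice of partition: taking the clopen arcs cut out by the points of $\Sigma$ alone gives the \emph{trivial} partition when $|\Sigma|=1$ (a single cut point on the doubled circle bounds one arc equal to the whole space), so all its $\Lambda$-translates are trivial and the join cannot separate points. The paper avoids this by using, for each $x\in\Sigma$ and each generator $\lambda$ of $\Lambda$, the two-piece partition with cuts at $x$ and $x+\lambda$, whose translates still produce cuts at every point of $\Sigma+\Lambda$. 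Alternatively you can note that adding to $\Sigma$ an extra point of $x_1+\Lambda$ changes neither $\Sigma+\Lambda$ nor $\iet(\Lambda;\Sigma)$, so one may assume $|\Sigma|\ge 2$, after which your partition works. With that repair the proposal is correct.
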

\begin{remark}
The Cantor minimal $\Lambda$-systems arising in this way have a very special form. Recall that the topological full group of a Cantor minimal $\Z^d$-system may fail to be amenable even for $d=2$~\cite{Elek-Monod}, while this does not happen for $\iet$ as shown by Theorem \ref{T:rank2full}.
\end{remark}

\begin{proof}
Let $\mathcal{C}$ be the space constructed out of the circle in the following way. Take a circle and double all points in $\Sigma+\Lambda$. Namely, we replace each point  $x\in \Sigma+\Lambda$ by two points: left and right, that we denote $x_-$ and $x_+$. There is a natural topology on $\mathcal{C}$ that makes it homeomorphic to the Cantor set. The action of $\Lambda$ on $\R/\Z$ by rotations induces an action on $\mathcal C$ by homeomorphisms. This action is minimal as soon as $\Lambda$ is infinite. It is then easy to see that the map $x_\pm \in X \mapsto x \in \R/\Z$ is a continuous surjection that implements a semiconjugation between the action of $[[\Lambda]]$ and $\iet(\Lambda; \Sigma)$.

To see that the $\Lambda$-Cantor system is in fact a subshift, we apply Lemma~\ref{L: subshift} to the following partition. Let $S$ be a generating set of $\Lambda$. For each $\lambda\in S$ and $x\in \Sigma$ consider the partition $\mathcal P_{\lambda, x}$ of $\mathcal C$ in the two clopen sets $[x_+,(x+\lambda)_-]$ and $[(x+\lambda)_+, x_-]$. Set $\mathcal P=\vee_{\lambda, x} \mathcal P_{\lambda, x}$. Then the $\Lambda$-translates of $\mathcal P$ separate points. 
\end{proof}

We now explain how this implies Part 1 of Theorem~\ref{T: poisson}. We first recall the notion of \emph{complexity} of a subshift.  Let $\Gamma$ be a finitely generated group with generating set $S$ and $\X$ be a $\Gamma$-subshift. Let $\mathcal P$ be a partition of $\X$ satisfying Lemma~\ref{L: subshift}. The \emph{one-dimensional complexity} of the subshift $(\Gamma, \X)$ with respect to $S$ and $\mathcal P$ is the function $\rho_{S, \mathcal P}\colon \N\to \N$ that counts the number of elements of the partition $\vee_{|\gamma|_s\leq n}\gamma \mathcal P$, where $|\cdot |_S$ is the word metric associated to $S$. For a $\Z$-subshift $\X\subset A^\Z$ this coincides with the more standard definition of complexity of sequences if one takes the standard generating set of $\Z$ and the partition indexed by $A$ into cylinder sets corresponding to the letter at position 0. The proof of the following lemma is elementary and not difficult.

\begin{lemma}\label{L: complexity}
 Let $(\Lambda, \mathcal C$) the Cantor minimal system constructed in the previous proposition, and assume that $\rk(\Lambda)=d$. Then for every generating set $S$ of $\Lambda$ and every partition $\mathcal P$ satisfying Lemma~\ref{L: subshift} there exists a constant $C>0$ so that $\rho_{S, \mathcal P}(n)\leq Cn^d$.
\end{lemma}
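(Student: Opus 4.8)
The plan is to produce, for a fixed generating set $S$ and a fixed partition $\mathcal{P}$ satisfying Lemma~\ref{L: subshift}, an explicit upper bound on the number of atoms of $\bigvee_{|\lambda|_S\le n}\lambda\mathcal{P}$ by tracking how each $\Lambda$-translate of an atom cuts the doubled circle $\mathcal{C}$. The key observation is that every clopen partition of $\mathcal{C}$ into finitely many pieces is a partition into finite unions of arcs $[a_+,b_-]$ whose endpoints lie in $\Sigma+\Lambda$; so a refinement $\bigvee_{\lambda\in T}\lambda\mathcal{P}$ for a finite $T\subset\Lambda$ is determined by the finite set of endpoints $E(T)=\bigcup_{\lambda\in T}\lambda\cdot E$, where $E\subset\Sigma+\Lambda$ is the (finite) set of endpoints of the atoms of $\mathcal{P}$ and $\lambda$ acts by rotation. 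Since $\mathcal{C}$ is (topologically) a circle with doubled points, a set of $N$ marked points cuts it into at most $N$ atoms (counting the doubled structure carefully, a bounded multiple of $N$). Hence it suffices to bound $|E(T_n)|$ where $T_n=\{\lambda\in\Lambda:|\lambda|_S\le n\}$.

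The heart of the matter is therefore: bound $|\{\mu+e : \mu\in T_n,\ e\in E\}|$ inside $\R/\Z$. Since $E$ is a fixed finite set, this is at most $|E|\cdot|T_n|$, and so the whole problem reduces to showing $|T_n|=|\{\mu\in\Lambda:|\mu|_S\le n\}|\le C'n^d$ when $\rk(\Lambda)=d$. This is where I would invoke the structure of $\Lambda$: as a finitely generated abelian group it is isomorphic to $\Z^d\times F$ with $F$ finite, and a finitely generated abelian group of torsion-free rank $d$ has polynomial growth of degree exactly $d$ (this is classical --- e.g. it follows from the fact that balls in $\Z^d$ grow like $n^d$ and $F$ contributes only a bounded factor, or as a special case of the Bass--Guivarc'h formula). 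So $|T_n|\le C' n^d$, giving $|E(T_n)|\le |E|\,C'n^d$, and thus $\rho_{S,\mathcal{P}}(n)\le C n^d$ for a suitable $C$ depending on $S,\mathcal{P},\Lambda$ only.

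Concretely, the steps in order are: (1) describe the atoms of any clopen partition of $\mathcal{C}$ as finite unions of arcs with endpoints in $\Sigma+\Lambda$, and record the finite endpoint set $E$ of $\mathcal{P}$; (2) observe that $\bigvee_{\lambda\in T}\lambda\mathcal{P}$ has endpoint set contained in $\bigcup_{\lambda\in T}(\lambda+E)$, and that on the doubled circle $k$ marked points create at most $2k$ clopen atoms; (3) take $T=T_n$ the word-ball of radius $n$ in $\Lambda$ and bound $|T_n+E|\le |E|\cdot|T_n|$; (4) bound $|T_n|\le C'n^d$ using polynomial growth of the finitely generated abelian group $\Lambda$ of rational rank $d$; (5) combine to get $\rho_{S,\mathcal{P}}(n)\le 2|E|C' n^d =: Cn^d$.

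I expect the only genuinely delicate point to be the bookkeeping in step (2): making precise, on the Cantor space $\mathcal{C}$ obtained by doubling the points of $\Sigma+\Lambda$, the claim that a finite set of cut points produces only linearly many clopen atoms, and checking that the refinement $\bigvee_{\lambda}\lambda\mathcal{P}$ really is controlled by the union of the translated endpoint sets rather than by something larger (one must note that the rotation $\lambda$ sends doubled points to doubled points, so no new subtleties about left/right copies arise). Everything else --- the polynomial growth of $\Lambda$, the description of the partition --- is standard, and the lemma is marked as elementary precisely because the substance is this counting reduction to the growth of an abelian group.
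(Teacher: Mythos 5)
The paper does not actually prove this lemma --- it is stated with the remark that the proof is ``elementary and not difficult'' and then omitted --- so your write-up is supplying the missing argument rather than paralleling one. Your argument is correct and is surely the intended one: every clopen subset of $\mathcal C$ is a finite disjoint union of arcs $[a_+,b_-]$ with $a,b\in\Sigma+\Lambda$ (a point outside $\Sigma+\Lambda$ has no immediate neighbours, so it cannot be an endpoint of a clopen arc), the join $\bigvee_{|\lambda|_S\le n}\lambda\mathcal P$ is refined by the partition of $\mathcal C$ into the arcs determined by the endpoint set $\bigcup_{|\lambda|_S\le n}(\lambda+E)$ and hence has at most $|E|\cdot|B_S(n)|$ atoms, and $|B_S(n)|=O(n^d)$ since $\Lambda\cong\Z^d\times F$ with $F$ finite. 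The only cosmetic point is that $k$ doubled cut points yield exactly $k$ arcs rather than $2k$, which of course does not affect the bound.
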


\begin{proof}[Proof of Part 1 of Theorem~\ref{T: poisson}]
The conclusion follows from~\cite[Theorem 1.2]{Matte} by Proposition~\ref{prop:cantor} and Lemma~\ref{L: complexity}. (The statement of~\cite[Theorem 1.2]{Matte} assumes that $\Lambda=\Z$, but the proof extends with no changes if $\Lambda$ is virtually cyclic).\qedhere

\end{proof}

\section{A hereditarily amenable action which is not extensively amenable}\label{section:frankenstein}
Corollary~\ref{C: hereditarily amenable} shows that an obstruction for an amenable action to be extensively amenable is to fail be \emph{hereditarily amenable}. In this section we show that this is not the only obstruction.

We consider a group $H$ of piecewise projective orientation-preserving 
homeomorphisms of $\R$ intruduced in~\cite{FM}. A self-homeomorphisms $f$ of $\R$ belongs to this group if there exist intervals $I_1,\dots,I_n$ 
covering $\R$ and $g_1,\dots,g_n \in \mathrm{PSL}(2,\R)$ such that $f$ 
coincides with $g_i$ on $I_i$ for all $i \in \{1,\dots,n\}$. Here we 
consider the standard projective action of $\mathrm{PSL}(2,\R)$ on 
$P^1(\R) = \R \cup \infty$: $g \cdot x= \frac{ax+b}{cx+d}$ if $g = 
\pm\begin{pmatrix} a & b \\ c & d\end{pmatrix}$.
The goal of this section is to show

\begin{thm}
The action $H\acts \R$ is hereditarily amenable, but it is not extensively amenable.
\end{thm}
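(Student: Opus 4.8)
The plan is to prove the two assertions separately. For hereditary amenability I would argue as follows. The group $H$ of piecewise-$\mathrm{PSL}(2,\R)$ homeomorphisms of $\R$ acts on $\R = P^1(\R)\setminus\{\infty\}$, but the crucial point is that the stabilizer of $\infty$ in $\mathrm{PSL}(2,\R)$ is the (amenable) group of affine maps $x\mapsto ax+b$. Fix a point $r\in\R$ (say $r=0$, or work with the orbit of an arbitrary point) and a subgroup $K\le H$; I want to show $K$ acts amenably on its orbit $Kr\subset\R$. The idea is that each $f\in H$ acts near $-\infty$ (equivalently, near $\infty\in P^1(\R)$ approached from the left) as a single affine map; call it $\gm(f)\in\mathrm{Aff}(\R)$, the germ at $-\infty$. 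Similarly one has a germ $\gp(f)$ at $+\infty$. These are group homomorphisms $H\to\mathrm{Aff}(\R)$ (this is exactly what the macros \verb|\gp|,\verb|\gm| in the preamble suggest the authors use). Now $\mathrm{Aff}(\R)$ is solvable, hence amenable, so the kernel of $\gm$ (or of $(\gm,\gp)$) would be the ``interesting'' part. The key geometric input is that the Schreier graph of $K\acts Kr$ has a very controlled structure: because elements of $H$ are piecewise projective with finitely many pieces, the action on a single orbit is, away from a bounded region, governed by the affine germs, and affine actions on $\R$ have recurrent (indeed, at most linear-growth after collapsing) orbital graphs. Concretely I would try to show the orbital Schreier graph of $K\acts Kr$ is recurrent — e.g. by embedding it quasi-isometrically into the Schreier graph of the affine germ action, which lives on a line-like space — and then invoke Theorem~\ref{T: recurrent} together with Corollary~\ref{C: hereditarily amenable} to get not just amenability but extensive amenability of every sub-orbit action; in any case amenability of $K\acts Kr$ suffices for the stated hereditary amenability.

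For the negative part — that $H\acts\R$ is \emph{not} extensively amenable — the strategy is to use the characterization from Proposition~\ref{prop:inverted_orbit}, or rather its contrapositive, and exhibit a finitely generated subgroup $G\le H$ and a point $x_0$ for which the inverted orbit grows linearly with overwhelming probability, violating condition~\eqref{item:return_probability}. Equivalently, by Theorem~\ref{prop:functor:intro} (or its special case for the lamplighter functor), it suffices to find $G\le H$ and $X = Gx_0$ such that the affine action $(\Z/2\Z)^{(X)}\rtimes G\acts(\Z/2\Z)^{(X)}$ is \emph{non}-amenable. The cleanest route is to produce inside $H$ a cocycle realizing a \emph{non-amenable} twisted product: the germ homomorphism $H\to\mathrm{Aff}(\R)^2$, $f\mapsto(\gm(f),\gp(f))$, together with the ``finite data'' of where and how $f$ changes projective pieces, should let one build a subgroup $G$ whose action on a suitable countable $G$-set $X$ has an associated permutational wreath product containing a non-abelian free group or, more robustly, whose orbital Schreier graph is transient (not recurrent) — because for transient orbital graphs one expects linear growth of the inverted orbit. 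The concrete candidate is Monod's group of piecewise-$\mathrm{PSL}(2,\R)$ maps restricted to a subgroup that acts on the orbit of an irrational-type point with a tree-like or at least non-recurrent Schreier graph; the nonamenability of the associated lamplighter is then detected exactly as in the interval-exchange case of §\ref{section:iet} but with the \emph{opposite} conclusion since the graph is not recurrent.

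I expect the main obstacle to be the negative direction, and specifically \emph{certifying} that the relevant inverted orbit grows linearly (equivalently that the permutational wreath product is non-amenable). Recurrence of the Schreier graph is not by itself the obstruction the section is after — the whole point of the section is that hereditary amenability is \emph{not} sufficient — so one genuinely needs a separate mechanism forcing $\mathbb E(2^{-|\mathbf O_n|})$ to decay exponentially. The natural mechanism is to find, inside $H$, two elements $a,b$ whose germs at $\pm\infty$ generate a free subgroup of $\mathrm{Aff}(\R)\times\mathrm{Aff}(\R)$ acting on a set $X$ on which the inverted orbit under the word random walk has a positive exponential ``rate of spreading'' — one then shows directly that with probability $\ge e^{-\varepsilon n}$ is impossible to have $|\mathbf O_n|<\varepsilon n$, because the orbit points $g_k^{-1}x_0$ are forced apart by the free-group dynamics. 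Making this quantitative — i.e. a lower bound $\mathbb E|\mathbf O_n|\ge cn$ is not enough, one needs the exponential statement in Proposition~\ref{prop:inverted_orbit}\eqref{item:return_probabilitybis} to \emph{fail}, so one needs $|\mathbf O_n|\ge\varepsilon n$ with probability $1-o(e^{-\varepsilon n})$ — is where the real work lies; I would attack it via a large-deviation / concentration argument for the number of distinct points visited by the inverted walk, exploiting that along the random walk the affine germs drift at linear speed in at least one of the two ends $\pm\infty$, so that with exponentially good probability the $g_k^{-1}x_0$ are pairwise distinct once $k$ exceeds a small multiple of $n$.
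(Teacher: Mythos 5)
Your plan has genuine gaps in both directions, and the one in the first direction is fatal as stated. For hereditary amenability you propose to show that the orbital Schreier graph of every finitely generated $K\le H$ on every orbit is recurrent and then to invoke Theorem~\ref{T: recurrent}. But by Lemma~\ref{lemma=JSbis}, extensive amenability of $H\acts\R$ is \emph{equivalent} to extensive amenability of every finitely generated subgroup on every orbit, so your route would prove that $H\acts\R$ is extensively amenable --- exactly the negation of the second half of the theorem. Hence recurrence must fail for some finitely generated subgroup and orbit, and the heuristic that ``affine actions on $\R$ have recurrent orbital graphs'' cannot be salvaged. The paper's argument is of a completely different nature: for a subgroup $H^{(0)}\le H$ and a point $x$, the supremum $M=\sup(H^{(0)}x)$ is fixed by $H^{(0)}$; since the stabilizer of a point in $\mathrm{PSL}(2,\R)$ is the metabelian group $\mathrm{Aff}(\R)$, the second derived subgroup $H^{(2)}$ acts trivially near $M$, so any cluster point of $\delta_{x_n}$ with $x_n\to M$ is an $H^{(2)}$-invariant mean, and the amenable quotient $H^{(0)}/H^{(2)}$ fixes a point in the compact convex set of such means. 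Your germ homomorphisms $\gm,\gp$ into $\mathrm{Aff}(\R)$ are indeed the relevant objects, but the correct way to exploit them is through solvability of the germ at the fixed point, not through recurrence.

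For the negative direction, your strategy --- find a subgroup whose inverted orbit grows linearly with exponentially good probability and conclude via Proposition~\ref{prop:inverted_orbit} --- is not carried out, and the mechanism you lean on is not valid: transience of the Schreier graph, or even a linear lower bound on $\mathbb E|\mathbf O_n|$, does not imply failure of condition~\eqref{item:return_probability}, and you produce no concrete subgroup, no point $x_0$, and no estimate. More importantly, you never use the one external input that makes the paper's proof short, namely that $H$ itself is nonamenable (the main theorem of~\cite{FM}). The paper proves (Theorem~\ref{thm:Frankenstein}) the implication ``$H_1\acts\R$ extensively amenable $\Rightarrow$ $H_1$ amenable'' by applying Theorem~\ref{T: JNS} with the groupoid of germs of the partial $\mathrm{PSL}(2,\R)$-action: the isotropy groups are isomorphic to $\operatorname{Aff}(\R)\times\operatorname{Aff}(\R)$, hence amenable, and by analyticity the topological full group of that groupoid is just $\operatorname{Aff}(\R)$. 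Applied to $H_1=H$, nonamenability of $H$ then immediately rules out extensive amenability of $H\acts\R$. If you insist on a direct probabilistic proof you must actually construct the subgroup and establish the exponential decay of $\mathbb E(2^{-|\mathbf O_n|})$, which is a substantially harder task that your sketch does not accomplish.
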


\begin{remark}
By Lemma~\ref{lemma=JSbis}, this implies that there exists a finitely generated subgroup $L\le H$ and an $L$-orbit $X\subset \R$ so that the action of $L$ on $X$ is hereditarily amenable but not extensively amenable.
\end{remark}

The first ingredient of the proof is:

\begin{lemma}\label{lemma:Frankenstein_hereditarily_amenable}
$H \acts \R$ is hereditarily amenable.
\end{lemma}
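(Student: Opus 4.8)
The statement to prove is the final lemma, namely that the action $H \acts \R$ of the Monod piecewise-$\mathrm{PSL}(2,\R)$ group on $\R$ is hereditarily amenable. My plan is to appeal to the definition of hereditary amenability directly: fix an arbitrary subgroup $L \le H$ and an $L$-orbit $X \subseteq \R$, and exhibit an $L$-invariant mean on $X$. The geometry to exploit is that each $h \in H$ is, outside of a finite set of breakpoints, \emph{locally} an element of $\mathrm{PSL}(2,\R)$ acting by a M\"obius transformation on $P^1(\R) = \R \cup \{\infty\}$, so the action of $H$ on $\R$ is intimately related to the action of (subgroups of) $\mathrm{PSL}(2,\R)$ on $P^1(\R)$. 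The key point is that $\mathrm{PSL}(2,\R)$ acts on $P^1(\R)$ with amenable (indeed, solvable — upper-triangular Borel) point stabilizers, so that action is amenable; one then has to transfer this to $H$.

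\textbf{Key steps, in order.} First, I would reduce to a single $L$-orbit $X = L\cdot x_0$ for $x_0 \in \R$, which is legitimate since an $L$-invariant mean on each orbit assembles (via any mean on the index set of orbits, or just working orbit by orbit) — and in fact hereditary amenability is exactly the orbit-by-orbit statement. Second, I would track the breakpoints: for $h \in H$ let $B(h) \subset \R$ be its finite set of discontinuities of the "local $\mathrm{PSL}_2$ piece", i.e. the finite set off which $h$ agrees locally with elements of $\mathrm{PSL}(2,\R)$; then $B(h_1 h_2) \subseteq B(h_2) \cup h_2^{-1} B(h_1)$, so for a finitely generated subgroup the union of all breakpoints along a word of length $n$ grows at most linearly, and more to the point each orbit point $h\cdot x_0$ carries a well-defined "germ" $\sigma(h) \in \mathrm{PSL}(2,\R)$ describing the local behaviour near $x_0$ — but this germ depends on $h$, not just on $h\cdot x_0$, so one must be careful. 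The cleaner route: define a map $\Phi\colon X \to P^1(\R)$ by following the $H$-action; since $H$-elements act off finitely many points like M\"obius maps, and since $\mathrm{PSL}(2,\R) \acts P^1(\R)$ has amenable stabilizers hence is an amenable action, pull back a $\mathrm{PSL}(2,\R)$-invariant mean — but $\mathrm{PSL}(2,\R)$ is not amenable, so "$\mathrm{PSL}(2,\R)$-invariant mean on $P^1(\R)$" does not exist; rather the \emph{action} $\mathrm{PSL}(2,\R)\acts P^1(\R)$ is amenable, which means the (generalized Reiter/Day) amenability of the action, equivalent to amenability of any point stabilizer. So the real argument must use that $L \acts X$ factors, in a suitable approximate sense, through the action on $P^1(\R)$ with amenable stabilizers.

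\textbf{The main obstacle.} The crux — and where I expect the real work to be — is that $H$ acts on $\R$ \emph{piecewise}, not globally, by M\"obius transformations: there is no honest $H$-equivariant map $\R \to P^1(\R)$ to transport a mean along, because the "which $\mathrm{PSL}_2$ element describes the germ here" assignment is only locally constant and jumps across breakpoints. The way around this, which I would carry out, is to stabilise the base point: for the orbit $X = L\cdot x_0$, consider the \emph{germ of $L$ at $x_0$}, i.e. the stabilizer-type data, and show that the stabilizer $L_{x_0}$ of $x_0$ in $L$ — or rather an appropriate finite-index-by-amenable overgroup — acts in a way that its image in the germ group is contained in a conjugate of the Borel of $\mathrm{PSL}(2,\R)$, hence is amenable; then the orbit $X \cong L/L_{x_0}$ carries an invariant mean since point stabilizers are amenable and hence the action of $L$ on $L/L_{x_0}$ is amenable (a transitive action with amenable stabilizer is amenable — induce up the invariant mean from the trivial group on the amenable stabilizer, or use that amenability of $G \acts G/K$ is equivalent to amenability of $K$). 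Making "the stabilizer $L_{x_0}$ is amenable" precise is the heart of it: one observes that an element $h \in H$ fixing $x_0$ has a well-defined germ at $x_0$ lying in $\mathrm{PSL}(2,\R)$, the germ map $L_{x_0} \to \mathrm{PSL}(2,\R)$ is a homomorphism landing in the stabilizer of $x_0$ in $\mathrm{PSL}(2,\R)$ — a Borel subgroup, hence solvable hence amenable — and its kernel consists of elements fixing $x_0$ with trivial germ there; one then iterates or uses that this kernel, acting on nearby orbit points, is handled the same way, ultimately bootstrapping to amenability of $L_{x_0}$ by a transfinite/inductive argument on the breakpoint structure, or more slickly by noting that the relevant stabilizers in $H$ are locally solvable-by-(locally finite). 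I would organise this as: (1) germ homomorphism at a point has amenable image (Borel of $\mathrm{PSL}_2$); (2) its kernel acts on the orbit with all stabilizers again of this form, so by induction on orbit structure / an exhaustion argument all point stabilizers of $L \acts X$ are amenable; (3) a transitive action with amenable stabilizers is amenable; (4) assemble over orbits and over subgroups $L$.
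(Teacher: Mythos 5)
Your proposed argument has two fatal gaps. First, step (3) --- ``a transitive action with amenable stabilizers is amenable'' --- is false: a nonabelian free group acting on itself by left translation has trivial (hence amenable) stabilizers but admits no invariant mean. Amenability of $G\acts G/K$ is \emph{not} equivalent to amenability of $K$; what is true is the implication used elsewhere in this paper (Corollary~\ref{prop:functor:amenable:intro}), namely that an \emph{amenable} action with amenable stabilizers forces the group to be amenable --- the opposite direction from what you need. Second, step (2) is also false: the point stabilizers of $H\acts\R$ are not amenable. The stabilizer of $x_0$ contains every element of $H$ supported in an interval disjoint from $x_0$, and such ``compactly supported'' subgroups of $H$ are non-amenable (this is essentially the content of~\cite{FM}; if all of them were amenable, $H$ itself would be). Your germ homomorphism $L_{x_0}\to\mathrm{Stab}_{\mathrm{PSL}(2,\R)}(x_0)$ does have solvable image, but its kernel --- elements trivial near $x_0$ --- is exactly where the non-amenability lives, and no bootstrap over the breakpoint structure can remove it, since the same obstruction reappears at every point of the orbit.

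The paper's proof keeps your key observation (the stabilizer of a point in $\mathrm{PSL}(2,\R)$ is a metabelian Borel subgroup) but applies it at the \emph{supremum} $M=\sup(H^{(0)}x)\in\R\cup\{\infty\}$ of the orbit rather than at a point of the orbit. The point $M$ is fixed by all of $H^{(0)}$, so the germ of $H^{(0)}$ at $M$ (from the left) is a homomorphism of the \emph{whole} subgroup $H^{(0)}$ into the metabelian stabilizer of $M$ in $\mathrm{PSL}(2,\R)$; hence the second derived subgroup $H^{(2)}$ acts trivially on a neighbourhood of $M$ and eventually fixes any sequence $x_n\in H^{(0)}x$ converging to $M$. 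A weak-* cluster point of $(\delta_{x_n})$ is then an $H^{(2)}$-invariant mean, and since $H^{(0)}/H^{(2)}$ is metabelian (hence amenable) it fixes a point in the nonempty compact convex set of $H^{(2)}$-invariant means, producing the desired $H^{(0)}$-invariant mean. This sidesteps both of your gaps: no claim about stabilizers of orbit points is needed, and no implication from amenable stabilizers to amenable actions is invoked.
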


\begin{proof}
Let $H^{(0)} \le H$ be any subgroup and let $x \in \R$. Denote by $H^{(1)}$ the 
commutator subgroup of $H^{(0)}$, and by $H^{(2)}$ the commutator subgroup of 
$H^{(1)}$. Denote $M=\sup(H^{(0)} x) \in \R \cup \{\infty\}$. It is a fixed 
point of $H^{(0)}$. Let $x_n \in H^{(0)} x$ be a sequence converging to $M$. Since 
the double commutator of the stabilizer of $M$ in $\mathrm{PSL}(2,\R)$ 
is trivial, every element $h$ of $H^{(2)}$ is trivial on a neighborhood 
of $M$, and hence satisfies $h x_n = x_n$ for all $n$ large enough 
depending on $h$. Therefore any w-* cluster point of the sequence 
$\delta_{x_n}$ is an $H^{(2)}$-invariant mean on $H^{(0)}x$.

This shows that the compact convex subset $K \subset \ell^\infty(H^{(0)}x)^*$ 
of all $H^{(2)}$-invariant means on $X$ is nonempty. The action of $H^{(0)}$ 
on $K$ factors to an action of the amenable group $H^{(0)}/H^{(2)}$, and hence 
has a fixed point. Such a fixed point is an $H^{(0)}$-invariant mean on $X$.
\end{proof}

The fact that $H \acts \R$ is not extensively amenable follows from the 
following Theorem because it was proved in~\cite{FM} that $H$ is not amenable.

\begin{thm}\label{thm:Frankenstein} A subgroup $H_1$ of $H$ is amenable 
if and only if $H_1 \acts \R$ is extensively amenable.
\end{thm}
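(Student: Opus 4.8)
The plan is to deduce Theorem~\ref{thm:Frankenstein} from the functorial machinery already developed, specifically from Corollary~\ref{prop:functor:amenable:intro} and Theorem~\ref{prop:functor:modern}, applied to a suitable functor $F\colon\cati\to\cata$ together with a cocycle with amenable kernel defined on all of $H$. This mirrors exactly the strategy used for $\iet$ in \S\ref{section:iet}: there one exploits that an interval exchange, being only finitely discontinuous, differs from its ``other one-sided version'' by a finitely supported permutation, giving a $\Sym(\R/\Z)$-cocycle whose kernel is the (amenable) group of rotations. Here the analogous observation is that an element $f$ of $H$ is piecewise in $\mathrm{PSL}(2,\R)$, hence is globally a projective transformation \emph{outside a finite set of breakpoints}; so $f$ differs from a genuine element of $\mathrm{PSL}(2,\R)$ only at finitely many points, and comparing $f$ with its ``left-continuous'' (or otherwise canonically chosen) projective modification $\widetilde f$ produces a finitely supported permutation $\tau_f = \widetilde f f^{-1}\in\Sym(\R)$.

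First I would make precise the choice of $\widetilde f$. The natural candidate: at each breakpoint, $f$ locally agrees with two elements $g_-,g_+\in\mathrm{PSL}(2,\R)$ on the two sides; set $\widetilde f$ to use, say, the left-hand germ everywhere, so $\widetilde f$ is a well-defined injection of $\R$ into $P^1(\R)$ agreeing with $f$ off the finite breakpoint set. One must check $\widetilde f$ actually lands in $\R$ (handle the point $\infty$ and potential collisions — possibly by working on $P^1(\R)$ and noting the group $H$ fixes no issues since elements of $H$ are homeomorphisms of $\R$, so $\widetilde f$ takes values in $\R$ up to the finite exceptional set). Then $\tau_f\in\Sym(\R)$ has finite support, and the cocycle identity $\tau_{fg} = \tau_f\,({}^f\tau_g)$ follows by the same computation as in \eqref{E: cocycle}: $\tau_{fg} = \widetilde{fg}\,(fg)^{-1} = \widetilde f\,\widetilde g\, g^{-1} f^{-1} = \tau_f\, f\tau_g f^{-1}$, using that $\widetilde{\,\cdot\,}$ is multiplicative (the left germ of a composition is the composition of left germs — this needs the orientation-preserving hypothesis, which is given). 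Hence $g\mapsto(\tau_g,g)$ is an injective homomorphism $H\hookrightarrow \Sym(\R)\rtimes H$, i.e.\ $\tau$ is a $\Sym(\R)$-cocycle, and its kernel $\{g : \tau_g = 1\}$ is the subgroup of elements of $H$ that are \emph{globally} in $\mathrm{PSL}(2,\R)$, which is amenable (it is a subgroup of $\mathrm{PSL}(2,\R)$, hence virtually solvable-by-compact — in any case amenable, or one can just note it embeds in $\mathrm{PSL}(2,\R)$ and invoke amenability of $\mathrm{PSL}(2,\R)$'s relevant subgroups; more safely, it is a subgroup of the amenable group of affine maps after conjugation, but simplest: it is contained in $\mathrm{PSL}(2,\R)$ and we only need \emph{its} amenability, which holds as $\mathrm{PSL}(2,\R)$ is... actually $\mathrm{PSL}(2,\R)$ is not amenable, so here one must restrict: elements of $H$ that are globally projective and are homeomorphisms of $\R$ fixing $\infty$ form a subgroup of the affine group $\mathrm{Aff}(\R)$, which \emph{is} amenable — I would phrase the kernel this way).

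With the cocycle in hand, the two directions follow formally. For the ``if'' direction: if $H_1\le H$ and $H_1\acts\R$ is extensively amenable, apply Corollary~\ref{prop:functor:amenable:intro} with the functor $F = \Sym$ (Example~\ref{ex:sym}) and the group $F(X)\rtimes G$ taken with $X = \R$, $G = H$; the restriction of $\iota$ to $H_1$ lands in $\Sym(\R)\rtimes H_1$, and $\iota(H_1)\cap(\{1\}\times H_1)$ is contained in the amenable kernel of $\tau$, hence is amenable, so $H_1\cong\iota(H_1)$ is amenable. For the ``only if'' direction: if $H_1$ is amenable then by Lemma~\ref{lemma=intermediatepty} every action of $H_1$ is extensively amenable, in particular $H_1\acts\R$. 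This proves the equivalence, and then the final Theorem follows: $H$ is not amenable by~\cite{FM}, so by Theorem~\ref{thm:Frankenstein} the action $H\acts\R$ is not extensively amenable, while it is hereditarily amenable by Lemma~\ref{lemma:Frankenstein_hereditarily_amenable}.

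The main obstacle I anticipate is purely in the first step: verifying that $\widetilde f$ is well-defined as an injection into $\R$ (not just $P^1(\R)$) and that $f\mapsto\widetilde f$ is genuinely a group homomorphism, i.e.\ that the one-sided germ operation is compatible with composition on the nose rather than only up to finite error. The orientation-preserving assumption is what makes the left germ of $f\circ g$ at a point $x$ equal the left germ of $f$ at $g(x)$ composed with the left germ of $g$ at $x$; one should also confirm that $\tau_f$ really has \emph{finite} support, which reduces to the breakpoint sets being finite — immediate from the definition of $H$ — and that no cancellation problems arise at $\infty$. Once this bookkeeping is done, everything else is a direct invocation of already-proved results.
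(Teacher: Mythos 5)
Your ``only if'' direction is fine (Lemma~\ref{lemma=intermediatepty}), and your overall framing of how the equivalence yields the non-extensive-amenability of $H\acts\R$ is correct. But the ``if'' direction, which is the whole content, rests on a construction that collapses. The IET cocycle $\tau_g=\widetilde g g^{-1}$ is nontrivial only because an interval exchange is \emph{discontinuous} at its breakpoints: the left-continuous modification $\widetilde g$ genuinely takes different values there. An element $f\in H$ is a \emph{homeomorphism} of $\R$, hence continuous, so at a breakpoint $x$ the two one-sided projective germs $g_-,g_+$ satisfy $g_-(x)=g_+(x)=f(x)$; your ``left-germ modification'' therefore satisfies $\widetilde f=f$ pointwise, $\tau_f=\mathrm{id}$ for every $f$, and the kernel of your cocycle is all of $H$ --- which is nonamenable, so Corollary~\ref{prop:functor:amenable:intro} gives nothing. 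The alternative reading of your sentence ``$f$ differs from a genuine element of $\mathrm{PSL}(2,\R)$ only at finitely many points'' is simply false: two distinct M\"obius maps agree in at most two points, so a piecewise projective $f$ with at least two distinct pieces differs from \emph{any} single projective map on a cocountable set. There is no finitely supported permutation of $\R$ comparing $f$ to a projective map, and no $\Sym(\R)$-cocycle with amenable kernel of this kind.

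The paper's proof works instead at the level of \emph{germs}, not values, which is exactly where the finiteness lives: the germ of $f$ at $x$ is the germ of a single element of $\mathrm{PSL}(2,\R)$ for all but the finitely many breakpoints. This is formalized by Theorem~\ref{T: JNS} (from~\cite{JNS}), applied with $\mathcal H$ the groupoid of germs of the partial action of $\mathrm{PSL}(2,\R)$ on $\R$: one checks that the isotropy groups $\mathcal G_x\simeq\mathrm{Aff}(\R)\times\mathrm{Aff}(\R)$ are amenable, that $[[\mathcal H]]=\mathrm{Aff}(\R)$ is amenable (by analyticity of projective maps, an element of $[[\mathcal H]]$ has the same projective germ everywhere, hence is globally projective and fixes $\infty$), and extensive amenability of $H_1\acts\R$ is the hypothesis. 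If you want to salvage a cocycle-style argument you would need a cocycle into something like a wreath product over $\R$ with fibers built from the germ data, which is essentially what the proof of Theorem~\ref{T: JNS} does; the plain functor $F=\Sym$ on the set $\R$ is not enough here.
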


The proof relies on the main result in~\cite{JNS}. Recall that given a topological space $\X$ and a groupoid of germs of homeomorphims $\G$ acting on $\X$, the \emph{topological full group} $[[\G]]$ of $\G$ is the group of all self-homeomorphisms of $\X$ whose germs belong to the groupoid $\X$. 

\begin{thm}[\cite{JNS}]\label{T: JNS}
Let $G$ be a group acting on a topological space $\X$ with groupoid of germs $\G$. Assume that there is a groupoid of germs of homeomorphisms $\H$ acting on $\X$ so that the following holds 
\begin{enumerate}[(i)]
\item For every $g\in G$ the germ of $g$ at $x$ belongs to $\H$ for all but finitely many $x\in \X$.
\item For every $x\in X$ the isotropy group $\mathcal G_x$ is amenable.
\item The action $G\acts \X$ is extensively amenable.
\item The group $[[\H]]$ is amenable.

\end{enumerate}

Then $G$ is amenable. 

\end{thm}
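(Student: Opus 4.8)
The plan is to deduce the statement from the twisted-embedding criterion of Remark~\ref{R: method} (equivalently Corollary~\ref{prop:functor:amenable:intro}), applied to the symmetric-group functor $F=\Sym$ of Example~\ref{ex:sym}. Thus the goal is to exhibit a set $Y$ carrying an \emph{extensively amenable} $G$-action together with a cocycle $c\colon G\to\Sym(Y)$, $g\mapsto c_g$, satisfying $c_{gh}=c_g\cdot{}^{g}c_h$ and having amenable kernel $\{g\in G: c_g=1\}$; then $g\mapsto(c_g,g)$ embeds $G$ into $\Sym(Y)\rtimes G$ and Corollary~\ref{prop:functor:amenable:intro} forces $G$ to be amenable. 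By Lemma~\ref{lemma=JSbis} one may assume $G$ finitely generated, so that the finite defect sets appearing below are uniformly controlled on a generating set.

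First I would build the set $Y$ and check extensive amenability of the $G$-action on it. Let $Y$ be the space of germs of $\G$ taken modulo precomposition by $\H$: a point of $Y$ is a class $[\eta]$ of a germ $\eta\colon x\to y$ of $\G$, where $\eta\sim\eta\circ h$ for every germ $h$ of $\H$ into $x$, and set $t([\eta])=y$. Then $G$ acts by $g\cdot[\eta]=[\,[g]_{y}\circ\eta\,]$ and $t\colon Y\to\X$ is $G$-equivariant. For $g\in G_y$ the action on the fibre $t^{-1}(y)$ depends only on the germ $[g]_{y}$, so the fibre action $G_y\acts t^{-1}(y)$ factors through the isotropy group $\G_y$, which is amenable by hypothesis~(ii). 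Since an action factoring through an amenable group is extensively amenable (by Lemma~\ref{lemma=intermediatepty}, applied to $\G_y$ and pulled back), each fibre action is extensively amenable. As the base action $G\acts\X$ is extensively amenable by hypothesis~(iii), Proposition~\ref{P: extension} gives that $G\acts Y$ is extensively amenable.

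Next I would construct the cocycle. Hypothesis~(i) says that for each $g\in G$ the germ $[g]_x$ lies in $\H$ for all but finitely many $x$; off this finite defect set, left translation by $[g]_{y}$ preserves the $\H$-coset structure of the fibres of $t$, so the permutation of $Y$ implementing $g$ agrees, outside a finite subset, with the ``reference'' permutation determined by $\H$. Recording this discrepancy defines a finitely supported $c_g\in\Sym(Y)$, and the groupoid relation $[gh]_x=[g]_{hx}\circ[h]_x$ translates into $c_{gh}=c_g\cdot{}^{g}c_h$, making $g\mapsto(c_g,g)$ a homomorphism into $\Sym(Y)\rtimes G$. By construction $c_g=1$ precisely when every germ of $g$ belongs to $\H$, i.e.\ when $g\in[[\H]]$; hence $\ker c=G\cap[[\H]]$ is a subgroup of the amenable group $[[\H]]$ (hypothesis~(iv)), so it is amenable. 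Corollary~\ref{prop:functor:amenable:intro} applied to $F=\Sym$, the extensively amenable action $G\acts Y$, and this amenable kernel then yields amenability of $G$.

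The main obstacle is the precise definition and bookkeeping of $c$: one must verify that the reference $\H$-permutation of the fibres is well defined and that $g$ differs from it only on a finite set, so that $c_g$ genuinely lands in $\Sym(Y)$ and the cocycle identity holds. This is exactly where hypotheses~(i) and~(ii) are used \emph{together}, the isotropy condition serving to interpret the finitely many defect germs within the $\H$-coset structure. The second delicate point is confirming that $c_g=1$ characterises membership in $[[\H]]$ rather than a proper subgroup, which is what makes hypothesis~(iv) the correct kernel condition; once these two verifications are in place, the rest is an application of the machinery of \S\ref{section:functors}.
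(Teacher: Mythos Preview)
The paper itself does not give a proof of this theorem: it quotes the statement from~\cite{JNS} and only remarks that the original hypothesis ``recurrent'' is used there solely via Theorem~\ref{T: recurrent}, so it can be weakened to ``extensively amenable''. Your proposal is therefore not being compared with a proof in the paper but judged on its own.

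Your overall plan---build an auxiliary $G$-set $Y$, check extensive amenability of $G\acts Y$ via Proposition~\ref{P: extension} using hypotheses~(ii) and~(iii), and then feed a $\Sym(Y)$-cocycle with amenable kernel into Corollary~\ref{prop:functor:amenable:intro}---is exactly the shape of the argument in~\cite{JNS}. The first half of your sketch (definition of $Y=\G/\H$ with the target map $t$, and the fibre argument showing $G\acts Y$ is extensively amenable) is correct.

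The gap is precisely where you flag it, and it is more serious than a bookkeeping issue. Your cocycle $c_g$ is meant to record the discrepancy between the $G$-action on $Y$ and a ``reference permutation determined by $\H$'', but no such reference permutation exists in general: $\H$ is a groupoid, not a group, so it does not single out one bijection $t^{-1}(y)\to t^{-1}(gy)$. Even granting some choice of reference, the discrepancy is supported on the fibres $t^{-1}(y)$ for $y$ in the finite defect set of $g$; but each such fibre is typically \emph{infinite} (it surjects onto the set of sources in the $G$-orbit of $y$ modulo $\H$-equivalence). Hence $c_g$ would not be finitely supported and does not lie in $\Sym(Y)$. Contrast this with the $\iet$ case treated in \S\ref{section:iet}: there the cocycle is into $\Sym(X)$ itself and arises from a genuine second action $g\mapsto\widetilde g$ of $G$ on $X$ that agrees with the original action off a finite set. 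The abstract hypotheses~(i)--(iv) do not hand you such a second action, and manufacturing the correct finitely supported object is the actual content of the~\cite{JNS} argument; your sketch stops just before that step.
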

In fact, condition~(iii) in the original statement in~\cite{JNS} was that the action is recurrent, but this is used in the proof only through Theorem~\ref{T: recurrent}. 

\begin{proof}[Proof of Theorem~\ref{thm:Frankenstein}]
The only if part holds in view of Lemma~\ref{lemma=intermediatepty}. Assume that $H_1\acts \R$ is extensively amenable. Apply Theorem~\ref{T: JNS} to $G=H_1$ with $\mathcal H$ the groupoid of germs of the partial action of $\operatorname{PSL}(2,\R)$ on the real line. Condition~(ii) is satisfied since $\mathcal G_x\simeq \operatorname{Aff}(\R)\times\operatorname{Aff}(\R)$. To check condition~(iv), let $h\in [[\mathcal H]]$. Since projective homeomorphisms are analytic, it is  easy to see that the germ of $h$ in  at any two points $x,y\in \R$ is represented by the same element of $\operatorname{PSL}_2(\R)$. Therefore $h\in \operatorname{PSL}(2, \R)$ and $h$ stabilizes $\R$ globally. This shows that $[[\mathcal H]]=\operatorname{Aff}(\R)$ is amenable. Thereby $H_1$ is amenable. \end{proof}

 \section{Unrestricted wreath product actions}\label{section:unrestricted}
 This section is a complement to \S~\ref{section:functors}. In the results therein, we restricted ourselves to functors that can be written as direct limits of functor defined on finite sets. The next proposition shows that Theorem~\ref{prop:functor:intro} fails if this assumption is removed.
 
  \begin{prop}\label{P: counterexample unrestricted}
 There exists an extensively amenable, transitive action $G\acts X$ of a finitely generated group so that the action of the unrestricted wreath product $(\Z/2\Z)^X\rtimes G$ on $(\Z/2\Z)^X$ is not amenable.
 \end{prop}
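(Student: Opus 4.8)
The plan is to build a concrete example combining an extensively amenable action with a "non-amenability certificate" that survives passing to the unrestricted wreath product but not to the restricted one. The key tension is that for the restricted wreath product $(\Z/2\Z)^{(X)}\rtimes G$, amenability is equivalent to extensive amenability of $G\acts X$ by Theorem~\ref{prop:functor:modern}, so whatever goes wrong must genuinely exploit the non-finitary nature of $(\Z/2\Z)^X$. The natural source of such a phenomenon is a probabilistic one: by the inverted-orbit reformulation (Proposition~\ref{prop:inverted_orbit}), extensive amenability of $G\acts X$ forces $\mathbb E(2^{-|\mathbf O_n|})$ to decay subexponentially, but it says nothing about the behaviour of the \emph{complement} of the inverted orbit, which is what controls return probabilities for the unrestricted lamp configurations. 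I would look for $G\acts X$ where the inverted orbit grows slowly (e.g.\ recurrent, so $\frac1n\mathbb E|\mathbf O_n|\to 0$, hence extensively amenable by Theorem~\ref{T: recurrent}), yet the Schreier graph of the unrestricted wreath product is non-amenable.

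Concretely, I would first pick a finitely generated group $G$ with a recurrent transitive action $G\acts X$ that is nonetheless "big enough" to encode a non-amenable group inside $(\Z/2\Z)^X\rtimes G$. A promising candidate is $G$ a suitable f.g.\ group acting on $X=G$ (or on a recurrent Schreier graph such as $\Z$ or $\Z^2$) chosen so that $(\Z/2\Z)^X\rtimes G$ contains a non-abelian free group or some other non-amenable subgroup acting transitively on (a quotient of) $(\Z/2\Z)^X$. The point is that for the \emph{unrestricted} product, a lamp configuration supported on all of $X$ can be moved around by $G$ in a way that the finitely supported configurations cannot mimic, so the orbit of a generic configuration under $(\Z/2\Z)^X\rtimes G$ may carry a paradoxical decomposition even though $X$ and $G$ are tame. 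I would verify non-amenability of the resulting action either directly by exhibiting a paradoxical decomposition of an orbit, or via Kesten's criterion by showing the return probability of the induced random walk on the Schreier graph of $(\Z/2\Z)^X\rtimes G\acts (\Z/2\Z)^X$ decays exponentially --- the latter reduces, as in the proof of Proposition~\ref{prop:inverted_orbit}, to estimating $\mathbb P(f_n=f_0)$ where now $f_n$ is an unrestricted configuration and $f_0$ can be taken to be a configuration that is \emph{not} finitely supported.

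The steps, in order, would be: (1) choose $G$, $X$, and a step measure $\mu$ with $G\acts X$ recurrent, hence extensively amenable and transitive, by Lemma~\ref{lemma=JSbis} and Theorem~\ref{T: recurrent}; (2) identify explicitly a non-amenable subgroup $\Gamma \le (\Z/2\Z)^X\rtimes G$, or more precisely a $\Gamma$-invariant subset of $(\Z/2\Z)^X$ on which $\Gamma$ acts without invariant mean --- e.g.\ the orbit of a well-chosen base configuration; (3) produce the paradoxical decomposition of that orbit, or alternatively set up the relevant random walk and apply Kesten's criterion to conclude exponential decay of return probabilities; (4) conclude that $(\Z/2\Z)^X\rtimes G\acts (\Z/2\Z)^X$ admits no invariant mean, since non-amenability of an orbit (or of a subaction) precludes an invariant mean on the whole space.

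The main obstacle I anticipate is step (2)--(3): reconciling "$X$ is recurrent" (which is a strong smallness constraint and is exactly what makes the \emph{restricted} wreath product amenable) with "the unrestricted wreath product action is non-amenable" requires the non-amenability to come entirely from the infinitely-supported configurations, and making that precise --- either by writing down an honest paradoxical decomposition or by carefully analysing the induced random walk on an orbit of an infinitely supported configuration and showing its return probability is exponentially small --- is delicate. One has to be careful that the orbit one works with is genuinely infinite and that the paradoxicality is not destroyed by the $(\Z/2\Z)^X$-part being locally finite; I expect the cleanest route is to choose the base configuration and the group $G$ so that the stabilizer in $(\Z/2\Z)^X\rtimes G$ of that configuration is trivial (or amenable) while the orbit, as a $\Gamma$-set, is isomorphic to a non-amenable homogeneous space, reducing everything to a transparent algebraic statement.
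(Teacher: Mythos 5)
Your high-level strategy does match the paper's: take a recurrent (hence extensively amenable) transitive action and establish non-amenability of the unrestricted wreath action via a Kesten-type return-probability estimate driven by infinitely supported configurations. But two things stand between this outline and a proof. First, step~(4) is wrong as stated: non-amenability of a single orbit (or invariant subset) does \emph{not} preclude an invariant mean on the whole space --- an invariant mean on $(\Z/2\Z)^X$ can give zero weight to any particular orbit, and a disjoint union of non-amenable orbits whose spectral radii tend to $1$ is an amenable action. What you actually need, and what the paper proves (Lemmas~\ref{L: non-amenability orbits} and~\ref{L: spectral radius estimate}), is a spectral radius bound that is \emph{uniform over all orbits}: for every base configuration $t\in(\Z/2\Z)^X$ one has $\mathbb{P}(t_n=t)\le e^{-cn}$ with $c$ independent of $t$. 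Your proposed ``cleanest route'' via trivial stabilizers and an orbit isomorphic to a non-amenable homogeneous space runs into the same problem, since it only controls one orbit at a time.

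Second, essentially all of the content of the proposition lives in the construction you defer. The paper takes $G=\Z/2\Z*\Z/2\Z*\Z/2\Z$ acting on $X=\Z$ via a proper $3$-colouring of the edges of the line in which every finite proper word occurs infinitely often, so the Schreier graph is $\Z$ and the action is recurrent, hence extensively amenable. (Your candidate ``$X=G$ with $G\acts G$ recurrent'' cannot work: recurrence of the Cayley graph forces $G$ amenable, and then the whole unrestricted wreath product is amenable.) The non-amenability certificate is not an algebraic subgroup but a single hand-built configuration $f\in(\Z/2\Z)^X$: for each reduced word $w$ one plants $|w|+1$ pairwise disjoint copies of $\tilde w w\tilde w$ with $|\tilde w|\ge 5|w|$ and lights one lamp per copy at staggered offsets $x_i+i$. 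This separation forces the lamp values read along the trajectories of a switch-walk-switch random walk to be independent uniform Bernoulli variables (Lemma~\ref{L: independent Bernoulli}), yielding the uniform bound $2^{-|w_n|}\le 2^{-n/4}$ after a large-deviations estimate on $|w_n|$. Without this construction, or an equivalent one, the tension you correctly identify --- recurrence of $G\acts X$ versus non-amenability of the unrestricted action --- is named but not resolved.
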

 Here $(\Z/2\Z)^X$ denotes the abelian group of all configurations $f\colon X\to \Z/2\Z$.

We will use the following simple fact.
 \begin{lemma}\label{L: non-amenability orbits}
Let $G$ be a finitely generated group acting on a set $Y$, and let $\mu$ be a non-degenerate, symmetric finitely supported probability measure on $G$. Then the action of $G$ on $Y$ is non-amenable if, and only if, the spectral radius of $\mu$ on every $G$-orbit is uniformly bounded sway from 1, i.e. if there exists a uniform $\rho<1$ so that for any $y_0\in Y$ we have $\lim_n\mathbb{P}(g_{2n}y_0=y_0)^{1/2n}<\rho$, where $G_n$ is the left random walk on $G$ with step measure $\mu$.
\end{lemma}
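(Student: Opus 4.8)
Decompose $Y$ into $G$-orbits, $Y=\bigsqcup_{i\in I}Gy_i$, which gives an orthogonal decomposition $\ell^2(Y)=\bigoplus_{i\in I}\ell^2(Gy_i)$. Let $M_\mu\in B(\ell^2(Y))$ be the Markov operator of the random walk, $M_\mu f(y)=\sum_{g}\mu(g)f(g^{-1}y)$. Since $\mu$ is symmetric, $M_\mu$ is self-adjoint, and it clearly preserves each summand $\ell^2(Gy_i)$, so $\|M_\mu\|_{\ell^2(Y)}=\sup_{i\in I}\big\|M_\mu|_{\ell^2(Gy_i)}\big\|$. On a single orbit, $\big\|M_\mu|_{\ell^2(Gy_i)}\big\|$ equals the spectral radius of the simple random walk on the Schreier graph $\Gamma(G,y_i,\supp\mu)$, which by Kesten's formula (see~\cite[Theorem~10.6]{Woess}; the $\limsup$ is a limit by Fekete's subadditivity lemma applied to $p_{2(n+m)}(y_i,y_i)\geq p_{2n}(y_i,y_i)p_{2m}(y_i,y_i)$) is exactly $\lim_n\mathbb P(g_{2n}y_i=y_i)^{1/2n}$. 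Since this quantity depends only on the orbit, the existence of a uniform $\rho<1$ bounding all of them is precisely the statement that $\|M_\mu\|_{\ell^2(Y)}<1$.

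It therefore remains to prove that $G\acts Y$ is amenable if and only if $\|M_\mu\|_{\ell^2(Y)}=1$. For the ``only if'' direction, an invariant mean on $\ell^\infty(Y)$ is a weak-$*$ limit of a net of unit vectors in $\ell^1(Y)$ that is asymptotically invariant under each $g\in\supp\mu$; taking pointwise square roots produces asymptotically invariant unit vectors $\xi$ in $\ell^2(Y)$, whence $\langle M_\mu\xi,\xi\rangle\to 1$ and so $\|M_\mu\|_{\ell^2(Y)}=1$. For the ``if'' direction, self-adjointness of $M_\mu$ yields unit vectors $\xi_n\in\ell^2(Y)$ with $\|M_\mu\xi_n-\xi_n\|_2\to 0$; since the kernel of $M_\mu$ is nonnegative we may assume $\xi_n\geq 0$, and the positivity of $\mu$ gives $\|g\cdot\xi_n-\xi_n\|_2\to 0$ for every $g\in\supp\mu$. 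Setting $\phi_n=\xi_n^2\in\ell^1(Y)$, unit vectors, the identity $\|g\cdot\phi_n-\phi_n\|_1\leq\|g\cdot\xi_n-\xi_n\|_2\,\|g\cdot\xi_n+\xi_n\|_2\leq 2\|g\cdot\xi_n-\xi_n\|_2$ shows the $\phi_n$ are asymptotically $\supp\mu$-invariant, hence asymptotically $G$-invariant since $\supp\mu$ generates $G$. Any weak-$*$ cluster point of $(\phi_n)$ in $\ell^\infty(Y)^*$ is then a $G$-invariant mean on $Y$.

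The main point requiring care is the second paragraph, i.e. the equivalence between amenability of the action and $\|M_\mu\|_{\ell^2(Y)}=1$: one must handle the non-transitive case (multiple, possibly infinitely many orbits) uniformly and justify the $\ell^2\to\ell^1$ passage together with the step from almost-invariance under $\supp\mu$ to almost-invariance under all of $G$. All of this is standard, being the Schreier-graph version of Kesten's amenability criterion, so I would present it concisely, referring to~\cite{Woess} for the spectral radius computation and spelling out only the averaging argument above.
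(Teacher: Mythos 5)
Your proof is correct and follows the same route as the paper, which simply cites the spectral characterization of amenable actions (Kesten's criterion in the form of \cite[Theorem 10.6]{Woess}) together with the orbit decomposition of $\ell^2(Y)$; you have merely written out the standard details (orthogonal decomposition over orbits, Fekete for the limit, and the $\ell^1\leftrightarrow\ell^2$ square-root argument) that the paper leaves to the reference. No issues.
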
 
\begin{proof}
The action is nonamenable if and only if the spectral radius (i.e. the norm of the convolution operator  $f\mapsto \mu*f$ on $\ell^2(Y)$) is smaller than 1. This is equivalent to the fact that the spectral radius  on every $G$-orbit is uniformly bounded away from 1. It is well-known that the spectral radius  on the orbit of $y_0$ equals $\lim_n\mathbb{P}(g_{2n}y_0=y_0)^{1/2n}$, see~\cite{Woess}.
\end{proof}

To show Proposition~\ref{P: counterexample unrestricted} we construct a finitely generated group $G=\langle S\rangle$ acting faithfully and transitively on a set $X$ so that 
\begin{enumerate}
\item the Schreier graph of the action is isomorphic to $\Z$;
\item the action of $(\Z/2\Z)^X\rtimes G$ on $(\Z/2\Z)^X$ is not amenable.
\end{enumerate}

Note that the first condition guarantees that the action is extensively amenable by Theorem~\ref{T: recurrent}.

 Set $G=\Z/2\Z*\Z/2\Z*\Z/2\Z$, freely generated by $3$ involutions $b, y, r$ and set $X=\Z$. Let $B, R, Y$ denote three colors (blue, red and yellow). Consider the space of \emph{proper colorings} of edges of the line $\Z$, i.e. colorings where two adjacent edges never have the same color. Every such coloring defines an action of $G$ on $X$, where the generators $b,y,r$ act as involutions by switching two adjacent integers whenever the edge between them has the corresponding color $B, Y, R$. Consider any proper coloring  so that every  finite proper word in $B,R,Y$ appears infinitely many times (for example endow the space of of proper colorings with the natural Markov measure and pick a random one). This coloring gives the action $G\acts X$ that we consider (this is essentially the same action constructed in~\cite{VanDouwen}). We will abuse notations and identify a proper word in the colors $B,R,Y$ with an element of the group $G$.

Let $f\in (\Z/2\Z)^X$ to be determined later and consider the group $H \le (\Z/2\Z)^X\rtimes G$ generated by $G$ and $f$. We will construct an element $f$ such that the action of $H$ on any $H$-orbit in $(\Z/2\Z)^X$ has spectral radius uniformly bounded away from 1. This concludes by Lemma~\ref{L: non-amenability orbits}.

To construct $f$, arrange all reduced words in B,R,Y in a list (i.e. consider a numeration of elements of $G$, but we avoid introducing the numeration explicitly). Consider the first  word in the list $w$, and let $l=|w|$ be its length. Pick an auxiliary  word $\tilde{w}$ so that $|\tilde{w}|\geq 5 l$.  Find $l+1$ copies of the word $\tilde{w}w\tilde{w}$ in the colored line $X$ that do not overlap. For $i=0,\ldots, l$  let $x_i\in X$ be the vertex between the first $\tilde{w}$ and $w$ in the $i$-th copy of $\tilde{w}w\tilde{w}$.

Consider a big interval $I_w$ that contains all these words, and define  $f$ on $I_w$  to be $f(x_i+i)=1$ for every $i=0,\ldots,l$ and $f(y)=0$ for all other $y\in I_1$. Here $x_i+i$ denotes the point lying $i$ positions to the right of $x_i$

Then consider the second word $w'$ in the list and repeat the same construction taking care that everything happens outside of $I_w$, define a bigger interval $I_{w'}$ containing $I_w$, and so on. This defines $f$ everywhere. 
In fact, a random $f\in (\Z/2\Z)^X$ distributed according the uniform Bernoulli measure will also have the features that we need.

For every word $w\in G$, we denote $x_0(w),x_1(w),\cdots x_{|w|}(w)\in X$  the points $x_i$ appearing in the construction.

Let now $\nu$ be the equidistributed measure on the standard generating set of $G$, set $\eta=\frac{1}{2}\delta_f+\frac{1}{2}\delta e$, and consider the ``switch-walk-switch like'' measure on $H\le(\Z/2\Z)^X\rtimes G$ given by $\mu=\eta*\nu*\eta$.  Let $g_n=(f_n, w_n)$ be the right random walk on $H$ with step measure $\mu$, where $f_n \in (\Z/2\Z)^X$ and $w_n\in G$. Let also $h_1,\cdots h_n$ be the increments of $(w_j)$, so that $w_n=h_n\cdots h_1$.

Let $t\in (\Z/2\Z)^X$ be arbitrary, and set $t_n=g_n\cdot t$. The following lemma shows that the action of $H$ on $(\Z/2\Z)^X$ is nonamenable by Lemma~\ref{L: non-amenability orbits}, thereby concluding the proof.
\begin{lemma}\label{L: spectral radius estimate}
 There exists a constant $c>0$ that does not depend on $t$ such that 
\begin{equation}\label{claim}\mathbb{P}(t_n=t)\leq e^{-cn}.\end{equation}
\end{lemma}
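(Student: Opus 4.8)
The plan is to show that on a timescale of order $n$, with probability exponentially close to $1$, the random walk $(f_n)$ creates, somewhere in space, a configuration difference with $t$ that forces $t_n \ne t$. The key resource is the combinatorial structure built into $f$: for the word $w$ equal to (a suitable power of) one fixed generator, the construction has placed $|w|+1$ disjoint translated copies of a pattern, with marked points $x_i(w)$, in such a way that applying $w$ near $x_i(w)$ shifts the walk deterministically by $|w|$ steps in one direction. I would first fix one such ``driving word'' $w_0$ (a high power of a generator, whose lit point $x_i(w_0)+i$ is placed to the right), and observe that whenever the walk on $\mathbf Z$ executes the block of group elements spelling $w_0$ at the right location, the switch factors $\eta$ on either side have a constant probability of lighting/unlighting a lamp at a point where $t$ disagrees. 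The auxiliary padding words $\tilde w$ of length $\ge 5|w_0|$ guarantee that the relevant patterns do not overlap, so these ``successful lamp flips'' at different marked points are genuinely at distinct sites of $X$.

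The second step is a large-deviations / concentration estimate on the underlying $\mathbf Z$-valued walk $(w_n x_0)$. Since the Schreier graph is $\mathbf Z$ and $\nu$ is the equidistributed measure on the three involutions, the walk on $\mathbf Z$ is a genuine (lazy, after the $\eta$-switches) nearest-neighbour walk; in time $n$ it visits $\Theta(\sqrt n)$ distinct sites with overwhelming probability, and in particular with probability $\ge 1 - e^{-c'n}$ it will traverse, during the first $n$ steps, at least one full copy of $\tilde w_0 w_0 \tilde w_0$ while the increments $(h_j)$ happen to spell out exactly the colour-word $w_0$ along that stretch — this last event has probability bounded below by $\delta^{|\tilde w_0 w_0 \tilde w_0|}>0$ per attempt, and there are linearly many disjoint attempts, so the probability of never succeeding is $\le (1-\delta^{O(1)})^{cn} = e^{-c''n}$. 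Conditioned on such a successful traversal, the accompanying $\eta$-factor independently flips the lamp at a site where $t$ is $0$ but the target would need it to be $1$ (or vice versa) with probability $\ge 1/2$, and once that site lies outside the range subsequently visited by the walk — again an event of probability $\ge 1-e^{-c'''n}$ by the $\sqrt n$ range bound — the discrepancy with $t$ is frozen forever, so $t_n\ne t$. Intersecting these finitely many events, each failing with probability $\le e^{-(\text{const})n}$, gives $\mathbb P(t_n=t)\le e^{-cn}$ with $c>0$ independent of $t$.

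The main obstacle I expect is bookkeeping the independence and the ``freezing'': one must choose the scales so that (a) the walk reaches a fresh copy of $\tilde w_0 w_0 \tilde w_0$ that it has not yet disturbed, (b) the $\eta$-switch at the marked point is independent of the event that the increments spell $w_0$ there, and (c) after the flip the walk does not return to that site and undo it. Points (a) and (c) are handled by the $O(\sqrt n)$ range bound for the nearest-neighbour walk on $\mathbf Z$ together with the fact that $f$ was constructed with the copies spread out arbitrarily far apart, so one can always select a copy at distance $\gg \sqrt n$ from the origin yet still reachable; point (b) is immediate from the product structure $\mu=\eta*\nu*\eta$. Making all the constants uniform in the starting configuration $t$ is automatic because the mechanism only ever uses the two possible values $t(\text{site})\in\{0,1\}$ and flips to the other one with probability $\ge 1/2$, so $t$ enters only through which of the two switch outcomes counts as ``success''.
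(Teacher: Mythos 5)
Your strategy has two gaps that I do not think can be repaired. The first is quantitative: to prove $\mathbb{P}(t_n=t)\le e^{-cn}$ you must exhibit, inside the event $\{t_n=t\}$, an intersection of $\Omega(n)$ (conditionally) independent fair-coin events, whereas your mechanism produces a bounded number of ``frozen discrepancies''. The event that one randomized lamp happens to agree with the corresponding value of $t$ has probability $1/2$ — as you yourself note when you say the flip succeeds ``with probability $\ge 1/2$'' — so your final step (``intersecting these finitely many events, each failing with probability $\le e^{-(\mathrm{const})n}$'') is inconsistent: one of the events fails with probability $1/2$, and the best bound this route can give is $\mathbb P(t_n=t)\le \tfrac12+o(1)$. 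Moreover the ``freezing'' event (a recurrent nearest-neighbour walk on $\Z$ not returning, within $n$ further steps, to a site inside its own range) has probability $O(1/\sqrt n)$, not $1-e^{-c'''n}$. The second gap is structural: in your picture lamps are only ever flipped at sites visited by the walk on the Schreier graph $\Z$. Any such argument would apply verbatim to the \emph{restricted} wreath product action $(\Z/2\Z)^{(X)}\rtimes G\acts(\Z/2\Z)^{(X)}$, which \emph{is} amenable (indeed extensively amenable, since the Schreier graph is $\Z$, hence recurrent, and Theorem~\ref{T: recurrent} applies); so no argument of this shape can prove non-amenability. The entire point of the construction is that $f$ has \emph{infinite} support, so each $\eta$-switch flips infinitely many lamps simultaneously, far outside the $O(\sqrt n)$ range of the walk on $\Z$. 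Relatedly, the copies of $\tilde w w\tilde w$ are only guaranteed to occur infinitely often, with no density control, so the walk need not come near any of them in $n$ steps; your lower bound on the ``successful traversal'' probability is unjustified.

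The paper's proof draws its linearly many independent coins from the \emph{group element} $w_n=h_n\cdots h_1$ rather than from the position on $\Z$. Since $G=\Z/2\Z*\Z/2\Z*\Z/2\Z$ is a nonamenable free product, the reduced length $|w_n|$ is a drifted walk on $\N$, so $\mathbb P(|w_n|\le n/4)\le e^{-c_1n}$. One then uses the gadget that $f$ encodes for \emph{that random word} $w_n$ — not for a single driving word fixed in advance: each marked point $x_i(w_n)$ sits immediately to the left of a copy of $w_n$ in the colouring, so the successive letters of $w_n$ carry $x_i(w_n)$ rightward through the support point $x_i(w_n)+i$ of $f$, and the padding $|\tilde w|\ge 5|w|$ forces the corresponding sets of times $\Sigma_i$ to be nonempty and pairwise disjoint. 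Hence, conditionally on $w_n$, the $|w_n|+1$ values $t_n(x_0(w_n)),\dots,t_n(x_{|w_n|}(w_n))$ are i.i.d.\ uniform Bernoulli (Lemma~\ref{L: independent Bernoulli}), giving $\mathbb P(t_n=t,\ |w_n|\ge n/4)\le 2^{-n/4}$ uniformly in $t$. If you want to salvage your write-up, the two ingredients you must import are exactly these: the linear growth of $|w_n|$ in the free product, and the fact that the randomized lamps are indexed by the letters of $w_n$ via the infinitely supported $f$, not by the sites visited on $\Z$.
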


To prove the lemma, recall first that the length $|w_n|$ is essentially a drifted random walk on $\N$ with drift $1/3$ and it behavior is completely understood. By classical large deviations, there is a constant $c_1>0$ such that 
\[\mathbb{P}(|w_n|\leq n/4)\leq e^{-c_1n}.\]
Hence\begin{align*}\mathbb{P}(t_n=t)\ =&\ \mathbb{P}(t_n=t, |w_n|\leq n/4)+\mathbb{P}(t_n=t, |w_n|\geq n/4)\\
\leq &\ e^{-c_1n}+\mathbb{P}(t_n=t, |w_n|\geq n/4)\end{align*}
so it is sufficient to study the probability $\mathbb{P}(t_n=t)$ conditionally to the event $|w_n|\geq n/4$.

\begin{lemma}\label{L: independent Bernoulli}
Conditionally to $w_n$ and assuming that $|w_n|\geq n/4$, the values of $t_n$ on the points from the construction $t_n(x_0(w_n)), t_n(x_1(w_n)),\cdots t_n(x_{|w_n|}(w_n))$ are independent uniform Bernoulli in $\Z/2\Z$.
\end{lemma}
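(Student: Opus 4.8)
The plan is to understand how the configuration $t_n$ evolves along the random walk and isolate a family of coordinates which, conditionally on the $G$-component $w_n$, behave like independent fair coins. Recall that $g_n = (f_n, w_n)$ is the right random walk with step measure $\mu = \eta * \nu * \eta$, so that at step $j$ we first flip a fair coin to decide whether to add $f$ (the ``switch''), then multiply by a generator $h_j$ of $G$ (the ``walk''), then flip another fair coin to add $f$ again. Writing out the semidirect product multiplication, one sees that $f_n$ is a sum (in $(\Z/2\Z)^X$) of translates of $f$ by the successive partial products $w_j = h_j \cdots h_1$, each translate being included or not according to an independent fair coin; similarly $t_n = f_n + w_n \cdot t$, so $t_n(z) = (w_n^{-1}\cdot f_n)(z') + t(\,\cdot\,)$ evaluated appropriately. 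The point is that for a fixed target point $z$, the value $t_n(z)$ is the XOR of $t$ at one deterministic point (given $w_n$) with a sum of independent fair coins, one for each time $j \le n$ at which the translate of $\supp f$ by $w_j$ covers $z$.

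First I would set up notation for the ``switch variables'': let $\varepsilon_0, \varepsilon_1', \varepsilon_1, \dots$ be the i.i.d.\ fair coins coming from the two $\eta$-factors at each step, and express $f_n = \sum_{j} (\varepsilon_j' + \varepsilon_{j})\, w_{j-1}\cdot f$ (with the appropriate indexing bookkeeping for left/right translates). Then I would fix the $G$-trajectory, i.e.\ condition on $(w_0, w_1, \dots, w_n)$ with $|w_n| \ge n/4$, and ask: for which target points $z = x_i(w_n)$, $i = 0, \dots, |w_n|$, and at which times $j$ does the switch at step $j$ affect $t_n(z)$? This is governed by whether $w_j^{-1}\cdot x_i(w_n)$ lies in $\supp f$. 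The design of $f$ in the construction — the staggered pattern $f(x_i(w)+i)=1$ inside non-overlapping copies of $\tilde w w \tilde w$ with $|\tilde w| \ge 5|w|$ — is precisely engineered so that, as $w_n$ reads off a word of length $\ge n/4$, the points $x_0(w_n), \dots, x_{|w_n|}(w_n)$ get hit by the moving support $w_j \cdot \supp f$ at \emph{distinct} times $j$, and with the large buffers $\tilde w$ preventing spurious coincidences. So conditionally on $w_n$, the vector $\big(t_n(x_0(w_n)), \dots, t_n(x_{|w_n|}(w_n))\big)$ is an affine image (over $\Z/2\Z$) of a collection of \emph{distinct, and hence independent} switch variables, which makes it a uniform element of $(\Z/2\Z)^{|w_n|+1}$ — this is exactly the assertion of Lemma~\ref{L: independent Bernoulli}.

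Concretely the key steps, in order, are: (1) write down the explicit formula for $t_n$ as $w_n\cdot t$ XOR-ed with a sum of translates of $f$ weighted by the switch coins; (2) for the conditioned trajectory, identify for each $i \in \{0,\dots,|w_n|\}$ the (nonempty) set $J_i$ of times $j$ such that the switch at step $j$ toggles coordinate $x_i(w_n)$ of $t_n$, equivalently $w_j^{-1} x_i(w_n) \in \supp f$; (3) show, using the spacing hypotheses on $\tilde w$ versus $w$ and the fact that along a geodesic word all the relevant windows are disjoint, that one can select for each $i$ a time $j_i \in J_i$ with the $j_i$ pairwise distinct; (4) conclude that $\big(t_n(x_i(w_n))\big)_i$ is a linear function of independent fair coins that is surjective onto $(\Z/2\Z)^{|w_n|+1}$, hence its law is uniform, whence independence. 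I expect step (3) to be the main obstacle: it requires a careful combinatorial check that the reading of the word $w_n$ by the random walk, combined with the staggered placement of the $1$'s in $f$, really does produce a \emph{system of distinct representatives} $\{j_i\}$ — one must rule out the possibility that two different target points get switched only at the same times, which is where the $5l$ buffer length and the non-overlapping copies in the construction of $f$ are used. Everything else is bookkeeping with the semidirect-product formula and the elementary observation that an affine surjection pushes the uniform measure on the source to the uniform measure on the target.
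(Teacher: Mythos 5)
Your decomposition of $t_n(x_i(w_n))$ as a deterministic term plus $\sum_{j\in J_i}\xi_j$ over independent switch coins is exactly the right starting point, and your sets $J_i$ coincide (after the confinement argument you sketch) with the sets the paper calls $\Sigma_i$. The gap is in your steps (3)--(4): a system of distinct representatives for the sets $J_i$ does \emph{not} imply that the $\Z/2\Z$-linear map $\xi\mapsto\bigl(\sum_{j\in J_i}\xi_j\bigr)_i$ is surjective. For instance $J_1=\{1,2\}$, $J_2=\{2,3\}$, $J_3=\{1,3\}$ admits the SDR $(1,2,3)$, yet the three rows of the incidence matrix sum to zero mod $2$, so $Y_1+Y_2+Y_3$ is deterministic and the $Y_i$ are not independent. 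The combinatorial property you propose to extract from the construction is therefore strictly too weak to close the argument.

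What the construction actually delivers, and what is needed, is that the $J_i$ are pairwise \emph{disjoint} and each nonempty; independence is then immediate since the $Y_i$ involve disjoint nonempty blocks of coins. Disjointness does not come from the staggering of the $1$'s alone but from a rigidity observation absent from your outline: since $|w_n|\geq n/4$ and $|\tilde w|\geq 5|w_n|$, the trajectory $(h_r\cdots h_n\, x_i(w_n))_{r\le n}$ stays inside the $i$-th copy of $\tilde w w_n\tilde w$, and these copies are identically coloured, so all the trajectories are translates of one another and share a single displacement process $d_r=h_r\cdots h_n\,x_i(w_n)-x_i(w_n)$ that does not depend on $i$. The trajectory of $x_i(w_n)$ meets $\supp f$ exactly when $d_r=i$ (the only support point of $f$ in its window being $x_i(w_n)+i$), and since $d_r$ takes one value at each time $r$, at most one index $i$ can satisfy $d_r=i$; hence $J_i\cap J_{i'}=\varnothing$ for $i\neq i'$. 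Nonemptiness follows because $d$ moves in unit steps from $0$ to $|w_n|\geq i$. With disjointness in place of your SDR, the rest of your argument goes through.
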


This concludes the proof of Lemma~\ref{L: spectral radius estimate}, since then (conditionally to $w_n$) the probability that these values are equal to  the corresponding values of $t$ is bounded above by $2^{-|w_n|}\leq 2^{-n/4}$ and by disintegrating over possible values of $w_n$ we get
\[\mathbb{P}(t_n=t, |w_n|\geq n/4)\leq \sum_{|w|\geq n/4}\mathbb{P}(w_n=w)2^{-n/4}\leq 2^{-n/4}.\]
\begin{proof}[Proof of Lemma~\ref{L: independent Bernoulli}]
Fix $i$ and consider  the sequence of vertices $(h_r\cdots h_n x_i(w_n))_{r\leq n}$. For $i\neq j$ these sequences are obtained one from each other by a suitable translation (they  stay in a region where the coloring look the same, using that $n\geq 4|w_n|$ and that the auxiliary words $\tilde{w}$ from the construction have length $\geq 5|w_n|$). In particular it is not possible that for $i\neq j$ and for the same $0\leq r\leq n$ we have $h_r\cdots h_n x_i(w_n)=x_i(w_n)+i$ and $h_r\cdots h_n x_j(w_n)=x_j(w_n)+j$. For each $i=0,\ldots , |w_n|$ let $\Sigma_i$ be the set of times $r\leq n$ so that $h_r\cdots h_n x_i(w_n)=x_i(w_n)+i$. Then  the following two properties are satisfied
\begin{enumerate}
\item $\Sigma_i\cap \Sigma_j=\varnothing$ whenever $i\neq j$ (because of the above observation).
\item $\Sigma_i\neq \varnothing$ for every $i$ (since by construction $w_n=h_n\cdots h_1$ is the word read on the right of $ x_i(w_n)$ and hence $h_1\cdots h_n x_i(w_n)=x_i(w_n)+|w_n|$.)
\end{enumerate} 
Let $\xi_0, \ldots, \xi_n$ be independent Bernoulli random variables taking values in $\Z/2\Z$. Then it is readily checked that for every $i$ the value of $t_n(x_i(w_n))$ has the same distribution as $t(h_1\cdots h_n x_i(w_n))+\sum_{r\in \Sigma_i} \xi_r$. These are all Bernoulli random variables because of the second property above, and they are independent because of the first property. \qedhere

\end{proof}

We conclude with a question; a negative answer to it would trivialize Proposition~\ref{P: counterexample unrestricted}.

 \begin{question}
 Does there exist a faithful action of a nonamenable group $G$ on a set $X$ such that the action of $(\Z/2\Z)^X\rtimes G$ on $(\Z/2\Z)^X$ is amenable?
 \end{question}

\bibliography{extAmen}
\bibliographystyle{alpha}

\end{document}